\documentclass[11pt, reqno]{amsart}
\usepackage{amsmath}
\usepackage{mathrsfs}
\usepackage{yhmath}
\usepackage{amsxtra}
\usepackage{amscd}
\usepackage{amsthm}
\usepackage{amsfonts}
\usepackage{amssymb}
\usepackage{anysize}
\usepackage{accents}
\usepackage{bbding}
\usepackage{bbm}
\usepackage[bookmarks=true]{hyperref}
\usepackage{enumerate}
\usepackage{eucal}
\usepackage{fancyhdr}
\usepackage{mathtools,graphics,latexsym}
\usepackage{pifont}
\usepackage{pmgraph}
\usepackage{stmaryrd}
\usepackage[usenames,dvipsnames]{color}
\usepackage{tikz-cd}
\usepackage{upgreek}

\marginsize{2.9cm}{2.9cm}{2.15cm}{2.15cm}

\input prepictex
\input pictex
\input postpictex

\SetSymbolFont{stmry}{bold}{U}{stmry}{m}{n}

\newtheorem{thm}{Theorem}[section]
\newtheorem{cor}[thm]{Corollary}
\newtheorem{lem}[thm]{Lemma}
\newtheorem{prop}[thm]{Proposition}

\theoremstyle{definition}

\newtheorem{ex}[thm]{Example}
\theoremstyle{remark}
\newtheorem{rem}[thm]{Remark}

\numberwithin{equation}{section}

\begin{document}

\newcommand{\thmref}[1]{Theorem~\ref{#1}}
\newcommand{\secref}[1]{Section~\ref{#1}}
\newcommand{\lemref}[1]{Lemma~\ref{#1}}
\newcommand{\propref}[1]{Proposition~\ref{#1}}
\newcommand{\corref}[1]{Corollary~\ref{#1}}
\newcommand{\conjref}[1]{Conjecture~\ref{#1}}
\newcommand{\remref}[1]{Remark~\ref{#1}}
\newcommand{\eqnref}[1]{(\ref{#1})}
\newcommand{\exref}[1]{Example~\ref{#1}}

\DeclarePairedDelimiterX\setc[2]{\{}{\}}{\,#1 \;\delimsize\vert\; #2\,}
\newcommand{\bn}[1]{\underline{#1\mkern-4mu}\mkern4mu }

\makeatletter
\newcommand{\sbullet}{
  \mbox{\fontfamily{lmr}\fontsize{.6\dimexpr(\f@size pt)}{0}\selectfont\textbullet}}
\DeclareRobustCommand{\mathbullet}{\accentset{\sbullet}}
\makeatother

\newcommand{\nc}{\newcommand}

\nc{\Z}{{\mathbb Z}}
\nc{\Zp}{\Z_+}
\nc{\Zn}{\Z_-}
\nc{\C}{{\mathbb C}}
\nc{\N}{{\mathbb N}}

\nc{\hf}{\frac{1}{2}}
\nc{\thf}{\frac{3}{2}}
\nc{\bi}{\bibitem}
\nc{\wt}{\widetilde}
\nc{\wh}{\widehat}
\nc{\ov}{\overline}
\nc{\un}{\underline}
\nc{\bd}{\boldsymbol}
\nc{\mr}{\mathring}

\nc{\al}{\alpha}
\nc{\be}{\beta}
\nc{\ga}{\gamma}
\nc{\de}{\delta}
\nc{\ep}{\epsilon}
\nc{\La}{\Lambda}
\nc{\la}{\lambda}
\nc{\si}{\sigma}

\nc{\mc}{\mathcal}
\nc{\mf}{\mathfrak}
\nc{\cA}{\mc{A}}
\nc{\cC}{\mc{C}}
\nc{\cD}{\mc{D}}
\nc{\cE}{\mc{E}}
\nc{\cG}{\mc{G}}
\nc{\cL}{{\mc L}}
\nc{\cP}{\mc{P}}
\nc{\cS}{\mc{S}}
\nc{\cT}{\mc{T}}
\nc{\fA}{{\mf A}}
\nc{\fB}{{\mf B}}
\nc{\fb}{{\mf b}}
\nc{\fg}{{\mf g}}
\nc{\fh}{{\mf h}}
\nc{\fS}{\mf{S}}
\nc{\cB}{\mc{B}}
\nc{\z}{{\bf z}}

\nc{\s}{{\bf s}}
\nc{\hs}{\hat{s}}
\nc{\bs}{\bar{s}}
\nc{\bz}{\bar{0}}
\nc{\bo}{\bar{1}}

\nc{\Xl}{{\bd X}_{\! \ell}}
\nc{\Vac}{V_{{\rm crit}}}
\nc{\mn}{{m|n}}
\nc{\pn}{{p+m|q+n}}
\nc{\pdn}{{(p,q|m,n)}}
\nc{\pdnp}{{(p',q'|m',n')}}
\nc{\gs}{\mg_\pdn}
\nc{\gsr}{\fg_\prn}

\nc{\gl}{{\mf{gl}}}
\nc{\gls}{\gl_\pn}

\nc{\og}{{\ov{\fg}}}
\nc{\oh}{{\ov{\fh}}}
\nc{\ob}{{\ov{\fb}}}
\nc{\ogs}{{\ov{\fg}_{\pdn}}}

\nc{\mg}{\mathbbm{g}}
\nc{\mh}{\mathbbm{h}}
\nc{\mb}{\mathbbm{b}}
\nc{\mL}{\mathbbm{L}}
\nc{\omL}{\ov{\mathbbm{L}}}
\nc{\omLi}{\ov{\mathbbm{L}}^\iota}
\nc{\mV}{\mathbbm{V}}
\nc{\oE}{\ov E}

\nc{\mhs}{\mh_\pdn}
\nc{\mbs}{\mb_\pdn}

\nc{\obs}{{\ov{\fb}_\pdn}}
\nc{\ohs}{{\ov{\fh}_\pdn}}

\nc{\rla}{\mr\la}
\nc{\rlas}{\rla^\pdn}

\nc{\cLz}{\cL^\z}
\nc{\cLzs}{\cL^\z_\pdn}

\nc{\ocL}{\ov\cL}
\nc{\ocLz}{\ov\cL^\z}
\nc{\ocLzs}{\ov\cL^\z_\pdn}
\nc{\ocLs}{\ov\cL_\pdn}

\nc{\ta}{\tilde{a}}
\nc{\qm}{{q|m}}
\nc{\rr}{{r|r}}
\nc{\orr}{{(0,r|r,0)}}
\nc{\pp}{{\prime\prime}}

\nc{\hmn}{{\fh}_\mn}
\nc{\bmn}{{\fb}_\mn}

\nc{\hAz}{\wh{\cA}^\z}
\nc{\hP}{\wh \cP}
\nc{\oP}{\mr \cP}
\nc{\zpz}{\blb z^{-1}, \pz^{-1} \brb}

\nc{\hLs}{\wh{L}_\pdn}
\nc{\Ls}{L_\pdn}
\nc{\Az}{\cA^\z}
\nc{\Am}{\cA_{m}^\z}
\nc{\cLs}{\cL_\pdn}

\nc{\Xis}{\Xi_\pdn}
\nc{\vv}{|0\rangle}

\nc{\fz}{{\mf z}}
\nc{\fzg}{\fz(\wh{\mg}_{\rm crit})}
\nc{\fgm}{t^{-1} \mg [t^{-1}]}

\nc{\T}{{\rm T}}
\nc{\pz}{\partial_z}
\nc{\Ber}{{\rm Ber}}
\nc{\Bers}{{\rm Ber}^{\bf s}}
\nc{\cdet}{{\rm cdet}}
\nc{\sing}{{\rm sing}}
\nc{\End}{{\rm End}}
\nc{\tr}{{\mf{tr}}}
\nc{\otr}{\ov{\mf{tr}}}

\nc{\Imn}{\Is^{+}_\mn}
\nc{\Ipq}{\Is^{-}_{p|q}}
\nc{\Is}{\mathbb{I}}
\nc{\Js}{{\mathbb{J}}}
\nc{\Ks}{{\mathbb{K}}}
\nc{\Jp}{{\mathbb{K}^\prime}}
\nc{\Eii}{E_{i,i}}
\nc{\oEii}{\ov E_{i,i}}

\nc{\OO}{\mc{O}}
\nc{\Os}{\OO_\pdn}
\nc{\hgls}{\wh{\gl}_\pn}
\nc{\sigqmm}{\sig_{(q',m')}}
\nc{\sig}{{\bd \si}}
\nc{\gam}{{\bd \ga}}
\nc{\mga}{\mr \ga}
\nc{\mmu}{\mr \mu}
\nc{\hL}{\wh{L}}
\nc{\hcLs}{\wh{\cL}_\pdn}
\nc{\hcL}{\wh{\cL}}
\nc{\Lmqz}{\cL_{m|q}^\z}
\nc{\hLmqz}{\cL_{m|q}^\z}
\nc{\Lmqrz}{\cL_\mqr^\z}
\nc{\hLmqrz}{\wh{\cL}_\mqr^\z}

\nc{\Lt}{L^\theta}
\nc{\wL}{\wt{\un L}}
\nc{\wV}{\wt{V}}
\nc{\uiot}{\un \iota}

\nc{\hla}{\wh \la}
\nc{\ovla}{\ov{\la}}
\nc{\ovlas}{\ovla^\pdn}
\nc{\ovlamq}{\ovla^\mq}
\nc{\wtlamq}{\wt{\la}^\mq}
\nc{\opq}{\mathbbm{1}_{p|q}}

\nc{\Pmn}{\mc{P}_{\mn}}

\nc{\llangle}{\big\langle}
\nc{\rrangle}{\big\rangle}
\nc{\dpr}{{\prime \prime}}
\nc{\mm}{\mbox{-}}
\nc{\blb}{(\!(}
\nc{\brb}{)\!)}

\nc{\lei}{<_{\Is}}
\nc{\leqi}{\le_{\Is}}
\nc{\lesi}{<_{(\Is, \si)}}
\nc{\lepi}{<_{(\Is, \pi)}}
\nc{\lesig}{<_{(\Is, \sig)}}
\nc{\leqsi}{\le_{(\Is, \si)}}
\nc{\jlesi}{<_{(\Ks, \si)}}
\nc{\jleqsi}{\le_{(\Is, \si)}}
\nc{\cfd}{\text{CFD}}
\nc{\bD}{\textnormal{{\bf D}}}

\nc{\disp}{\displaystyle}
\nc{\ns}{\hspace{-0.3mm}}
\nc{\nns}{\hspace{-2mm}}
\nc{\tns}{\kern-1.1pt}

\advance\headheight by 2pt

\title[The Gaudin model and the super Bethe ansatz]
{The Gaudin model and the super Bethe ansatz for unitarizable modules over the general linear Lie (super)algebra}

\author[B. Cao]{Bintao Cao}
\address{School of Mathematics and Statistics, Yunnan University, Kunming 650500, China}
\email{btcao@ynu.edu.cn}

\author[W. K. Cheong]{Wan Keng Cheong}
\address{Department of Mathematics, National Cheng Kung University, Tainan 701401, Taiwan}
\email{keng@ncku.edu.tw}

\author[N. Lam]{Ngau Lam}
\address{Department of Mathematics, National Cheng Kung University, Tainan 701401, Taiwan}
\email{nlam@ncku.edu.tw}

\begin{abstract}

Let $\Az_\fg$ be the Gaudin algebra for the general linear Lie (super)algebra $\fg$ with respect to a sequence $\z \in \C^\ell$ of pairwise distinct complex numbers, and let $\mL$ be any $\ell$-fold tensor product of (infinite-dimensional) unitarizable highest weight $\fg$-modules. Fix a singular weight $\xi$ of $\mL$. We show that the singular weight space $\mL^\sing_\xi$ is a cyclic $\Az_\fg$-module, and that the Gaudin algebra $(\Az_\fg)_{\mL^\sing_\xi}$ for $\mL^\sing_\xi$ is a Frobenius algebra. We also show that $(\Az_\fg)_{\mL^\sing_\xi}$ is diagonalizable with a simple spectrum for a generic $\z$. Furthermore, we establish a super version of the Bethe ansatz for $\Az_\fg$ on $\mL^\sing_\xi$.

\end{abstract}

\maketitle

\setcounter{tocdepth}{1}

\section{Introduction}

The Gaudin model associated to a simple Lie algebra over $\C$ can be constructed using the notion of the center of an affine vertex algebra at the critical level (see \cite{FFR, FFTL, Ry06}).
The construction can be extended to the general linear Lie (super)algebra (see \cite{HM, MR, MTV06, Ta}).
Also, the Gaudin model contains (higher) Gaudin Hamiltonians, including the quadratic ones originally introduced by Gaudin to describe a completely integrable quantum spin chain \cite{G76, G83}.

There has been substantial progress in the study of the Gaudin model for the general linear Lie (super)algebra or a simple Lie algebra on finite-dimensional modules (see \cite{ChL25, FFRyb, HM, MTV06, MTV09-1, MTV09-2, MVY, Ry20}).
An understanding of the model on infinite-dimensional modules would be interesting and useful; relevant results have been obtained in \cite{CCL, ChL24, ChL26-1, ChL26-2}.
This paper will continue the investigation into the case of infinite-dimensional modules.

Fix a positive integer $\ell$.
Let $\Az_\fg$ be the Gaudin algebra for the general linear Lie (super)algebra $\fg$ with respect to a sequence $\z \in \C^\ell$ of pairwise distinct complex numbers.
It is the central object in the Gaudin model for $\fg$.
Let $\mL=L_1 \otimes \cdots \otimes  L_\ell$, where each $L_i$ is a $\fg$-module, and let $\mL^\sing$ be the singular space of $\mL$.

In \cite{ChL25}, the action of $\Az_\fg$ on $\mL^\sing$, where each $L_i$ is an irreducible polynomial $\fg$-module, has been studied.
The main goal of this paper is to extend the results of \cite{ChL25} to the case where each $L_i$ is an (infinite-dimensional) unitarizable $\fg$-module.

For any $\Az_\fg$-module $M$, let $(\Az_\fg)_M$ denote the Gaudin algebra for $M$, which is defined to be the image of $\Az_\fg$ in the endomorphism algebra $\End(M)$ of $M$.
For any singular weight $\xi$ of $\mL$, we denote by $\mL^\sing_\xi$ the singular weight space of $\mL$ of weight $\xi$.
We have the following theorem.

\begin{thm} [\thmref{Main}] \label{main1}

Let $L_i$ be an (infinite-dimensional) unitarizable highest weight $\fg$-module for all $i=1, \ldots, \ell$, and let $\xi$ be a singular weight of $\mL$. Then

\begin{enumerate}[\normalfont(i)]

\item $\mL^\sing_\xi$ is a cyclic $\Az_\fg$-module.
\item  $(\Az_\fg)_{\mL^\sing_\xi}$ is a Frobenius algebra.
\item  $(\Az_\fg)_{\mL^\sing_\xi}$ is diagonalizable with a simple spectrum for a generic $\z$.

\end{enumerate}
\end{thm}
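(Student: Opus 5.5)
Our approach is to reduce to the polynomial case of \cite{ChL25}, via super duality and a truncation argument, and then transport the three properties back to $\mL^\sing_\xi$. The unitarizable highest weight modules over $\fg=\gl_{m|n}$ (or $\gl_m$) occurring here come from Howe duality. Concretely, each $L_i$ arises inside a Fock space $\cS(\C^{m|n}\otimes\C^{d_i})\otimes\cS(\C^{m|n*}\otimes \C^{d_i'})$, with a dual reductive pair $(\fg,\gl_{d_i}\times\gl_{d_i'})$. Equivalently, each $L_i$ corresponds to a polynomial or Kac-type module over a larger algebra $\gl_{p+m|q+n}$ through the truncation/super duality functors $\Os$. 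The first step is to fix this identification. We realize each $L_i$ as the image under a truncation functor of an irreducible polynomial module $\hL_i$ over a suitable $\gls$ (or its $\pdn$ version). We then check that the tensor product $\mL$ and its singular weight space $\mL^\sing_\xi$ correspond to a singular weight space $\wh{\mL}^\sing_{\hat\xi}$ of $\hL_1\otimes\cdots\otimes\hL_\ell$. The weight $\hat\xi$ is obtained by the standard rule for transforming highest weights under super duality.

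The second and crucial step is compatibility with the Gaudin algebras. The generators of $\Az_\fg$ are the coefficients of the Berezinian/column determinant $\Ber(1+\text{...}\partial_z\ldots)$ built from $E_{ij}(z)=\sum_k E_{ij}^{(k)}/(z-z_k)$. We must show that the truncation functor intertwines the action of $\Az_{\gls}$ with that of $\Az_\fg$, possibly after multiplying by an explicit invertible scalar series and a shift of $\partial_z$. This should follow from the fact that the (super)Berezinian factorizes under block decomposition, with the extra block acting by scalars on the relevant weight spaces. As a result, $(\Az_\fg)_{\mL^\sing_\xi}$ is the image of $(\Az_{\gls})_{\wh{\mL}^\sing_{\hat\xi}}$ under an algebra isomorphism compatible with the module isomorphism $\wh{\mL}^\sing_{\hat\xi}\cong\mL^\sing_\xi$. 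I expect this to be the main obstacle. Singular vectors for the Borel of $\fg$ must match singular vectors for the Borel of $\gls$, which requires choosing the right parity sequence and Borel (the $\pdn$-type Borel). The higher Gaudin Hamiltonians must also match exactly rather than up to lower-order terms. I would first verify this on the generating matrix via a Schur-complement formula for $\Ber$. I would then handle the $\partial_z$-shifts by a direct argument on weight spaces.

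Given this identification, items (i)--(iii) follow from the corresponding results of \cite{ChL25} for polynomial modules. There, $\wh{\mL}^\sing_{\hat\xi}$ is a cyclic $\Az$-module, the image algebra is Frobenius, and it is diagonalizable with simple spectrum for generic $\z$. These properties are preserved under the isomorphism, and genericity of $\z$ is unaffected since $\z$ is unchanged. If the truncation requires the dimension parameters to be large, we choose $p,q$ large enough, which is possible because $\mL^\sing_\xi$ is finite-dimensional. Finally, we note that finite-dimensionality of $\mL^\sing_\xi$ follows from unitarizability: $\mL$ decomposes completely into unitarizable modules with finite multiplicities.
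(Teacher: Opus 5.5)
There is a genuine gap in your first step, and the rest of the argument depends on it. You propose to realize each $L_i$ as the image under a truncation functor of an irreducible polynomial module over a larger general linear superalgebra. Then $\mL^\sing_\xi$ would become a singular weight space $\wh\mL^\sing_{\hat\xi}$ of a polynomial tensor product. This cannot work.

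\begin{itemize}
\item A truncation functor only extracts a direct sum of weight spaces. An irreducible polynomial module is finite-dimensional, so its truncation is finite-dimensional, whereas $L_i$ is in general infinite-dimensional.
\item At the level of weights, polynomial modules have nonnegative coordinates. The highest weights $\rlas$ of the $L_i$ have nonpositive $\Ipq$-coordinates. They also carry the shift $-d\mathbbm{1}_{p|q}$, supported only on the $\Ipq$ block, which in general is not the weight of a one-dimensional $\mg$-module and so cannot be removed by a twist.
\item Note also that $\fg$ in the statement is the whole $\mg=\gs$, not $\gl_{m|n}$ sitting inside a larger algebra.
\end{itemize}

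So the object on which your second step is supposed to act does not exist. Your remark that one must ``choose the right Borel'' is left unresolved exactly where it matters.

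The paper's proof supplies the following missing ideas.

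\begin{itemize}
\item \textbf{Central extension.} Pass to the central extension via $\iota$, so that $\mL=\omL^\iota$ with $\omL\in\Os$. This is where the factor $c(z)=\sum_i d_i(z-z_i)^{-1}$ comes from.
\item \textbf{Bridge module.} Use a bridge module that is still infinite-dimensional, namely $\ov\mL'=\bigotimes_i L(\og_{(p,r|r,n)},\ovla_{(i)}^{(p,r|r,n)})$ with $r$ large. It has two different truncations: $\tr_\pdn(\ov\mL')=\omL$, and $\tr_{(0,r|r,0)}(\ov\mL')=\ov\mL''$. The pullback of $\ov\mL''$ is a tensor product of polynomial $\gl_{r|r}$-modules with highest weights $\wt\mu_{(i)}^{r|r}$, built from $\la_{(i)}$ and $d_i$.
\item \textbf{Odd reflections.} The two truncations do not see the same singular space. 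The space $(\ov\mL')^\sing_{\ovla^{(p,r|r,n)}}$ coincides with $(\ov\mL'')^\sing_{\ovla^{(0,r|r,0)}}$, but its weight in general does not lie in $\Xis$. One must move it by the odd-reflection map $\varphi(v)=Av$ of \propref{Lth} onto the $\sig$-singular space of weight $\ov\xi=(\ovla^{(p,r|r,n)})^\sig$, which equals $\omL^\sing_{\ov\xi}$. The map $\varphi$ intertwines the Gaudin algebra because $\Az$ commutes with the diagonal action.
\item \textbf{Berezinian comparison.} Only then does the comparison of \propref{trunc} enter, twice. It gives equalities of images $(\Az_{(p,r|r,n)})_M=(\Az_{(0,r|r,0)})_M$ and $(\Az_{\pdn})_N=(\Az_{(p,r|r,n)})_N$ on the same vector spaces, not an abstract algebra isomorphism.
\end{itemize}

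Your Schur-complement idea is a workable substitute for the paper's use of the Molev--Ragoucy formula in that last comparison. You must put the removed index first when it lies in $\Ipq$ and last when it lies in $\Imn$. Then the entries of its row, respectively column, annihilate vectors whose weight has vanishing coordinate there. But without the central extension, the bridge module and the odd reflections, the reduction to \cite{ChL25} does not go through.
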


Our strategy for proving \thmref{main1} is to analyze the isomorphisms of vector spaces in \eqref{diagram}.
Using the notion of truncation functors in super duality and the properties of Berezinians, we will prove \thmref{main1} by reducing all of its assertions to the corresponding assertions about irreducible polynomial modules, which have been established in \cite{ChL25}.

The non-super case of \thmref{main1} merits special mention. 
In fact, it extends a famous result of \cite{MTV09-1, MTV09-2} (see also \cite{Ry20}) about the Gaudin algebra $\Az_{\gl_m}$ for the general linear Lie algebra $\gl_m$ on an $\ell$-fold tensor product of irreducible finite-dimensional $\gl_m$-modules to an $\ell$-fold tensor product of infinite-dimensional unitarizable highest weight $\gl_m$-modules.

The Bethe ansatz method (\cite{BF, FFR}) provides an explicit construction of common eigenvectors for $\Az_{\gl_m}$ on the singular weight space $V^\sing_\mu$ of weight $\mu$, where $V$ is an $\ell$-fold tensor product of irreducible finite-dimensional $\gl_m$-modules.
These eigenvectors are called \emph{Bethe vectors} and are labeled by the solutions of the Bethe ansatz equations (see \secref{BAM}).
There is a conjecture, called the \emph{completeness of the Bethe ansatz}, which asserts that the Bethe vectors form an eigenbasis for $(\Az_{\gl_m})_{V^\sing_\mu}$ for a generic $\z$.
The work of \cite{MV05} shows that the conjecture holds for several examples, whereas a counterexample is discovered in \cite{MV07}.

\begin{thm} [\thmref{gls-gls-eigen} and \thmref{gl-gls-eigen}] \label{main2}

Let $\fg$, $\mL$, and $\xi$ be as in \thmref{main1}.
There exist a general linear Lie (super)algebra $\cG$, an $\ell$-fold tensor product $\mV$ of some irreducible polynomial $\cG$-modules, and a singular weight $\mu$ of $\mV$ (which depend on $\fg$, $\mL$, and $\xi$) such that there is a one-to-one correspondence between the set of eigenbases for $(\Az_\cG)_{\mV^\sing_\mu}$ and the set of eigenbases for $(\Az_\fg)_{\mL^\sing_\xi}$.

\end{thm}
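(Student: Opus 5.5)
The plan is to construct an explicit chain of linear isomorphisms that intertwines the Gaudin algebras. Such an isomorphism carries eigenvectors to eigenvectors, and since it is a bijection of vector spaces it carries eigenbases to eigenbases. This is the chain of isomorphisms in \eqref{diagram} that the paper sets up, and I would proceed in three steps.

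\textbf{Step 1: realize each $L_i$ via super duality.} By the classification of unitarizable highest weight $\fg$-modules (Jakobsen for $\gl_{m|n}$; the oscillator or Howe duality realization for $\gl_m$), each $L_i$ is, up to a one-dimensional twist by a power of the (super)determinant, an irreducible module in a parabolic category $\OO$ for an infinite-rank algebra $\mg$. These modules arise from the Fock space or oscillator realization. Super duality then links such a module to an irreducible polynomial module $V_i$ over a general linear Lie superalgebra $\cG$ of sufficiently large finite rank. This is done by using truncation functors on both the super side and the classical side. The rank of $\cG$ is chosen large enough, depending on $\xi$ and the highest weights of the $L_i$.

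\textbf{Step 2: transport singular weight spaces.} Let $\mV = V_1\otimes\cdots\otimes V_\ell$, and choose $\mu$ to be the weight corresponding to $\xi$ under the weight dictionary of super duality, adjusted by the determinant twists. I would show three things:
\begin{enumerate}[(a)]
\item The truncation functors are compatible with tensor products. This holds because truncation is projection onto a sum of weight spaces, and weights add under tensor product.
\item Truncation sends singular vectors of weight $\xi$ bijectively onto singular vectors of weight $\mu$, once the rank is large.
\item The determinant twist only shifts the weight and leaves the singular space unchanged.
\end{enumerate}
Together these give a linear isomorphism $\Phi\colon \mV^\sing_\mu\to \mL^\sing_\xi$.

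\textbf{Step 3: intertwine the Gaudin algebras.} This is the main obstacle. The Gaudin algebra is generated by the coefficients of a Berezinian (or column determinant) of a matrix $\pz\delta_{ij} - \sum_k E_{ij}^{(k)}/(z-z_k)$, so it is not enough to match Lie algebra actions. I would argue as follows.
\begin{enumerate}[(a)]
\item The generators of $\Az$ for the infinite-rank algebra, restricted to the truncated range of indices, act on weight spaces as the corresponding generators for the finite-rank algebra. Truncation commutes with $E_{ij}$ for indices in the retained range, and on singular vectors the terms involving other indices vanish or act by scalars.
\item Berezinian identities, namely factorization of the Berezinian of a block matrix and the relation between $\Ber$ for $\gl_{m|n}$ and for $\gl_{m+1|n}$ or $\gl_{m|n+1}$, express the transfer matrices of one side as products of those of the other, up to invertible scalar series in $z$. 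The scalar factors come from the twists and from the highest weight components.
\item Consequently $\Phi$ identifies $(\Az_\cG)_{\mV^\sing_\mu}$ with $(\Az_\fg)_{\mL^\sing_\xi}$ as subalgebras of endomorphisms, meaning $\Phi\, a\, \Phi^{-1}$ ranges over the same algebra. The two sides generally have different generators, so I would compare the generated algebras rather than individual generators.
\end{enumerate}

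\textbf{Conclusion.} Once Step 3 is in place, the map $B\mapsto \Phi(B)$ is a bijection from eigenbases of $(\Az_\cG)_{\mV^\sing_\mu}$ to eigenbases of $(\Az_\fg)_{\mL^\sing_\xi}$. Its inverse is induced by $\Phi^{-1}$.
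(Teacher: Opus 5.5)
Your overall architecture is right: a chain of linear isomorphisms, each intertwining the Gaudin algebras, with Berezinian truncation identities supplying scalar factors. Your Step~3 is essentially the paper's \propref{trunc}, with \lemref{wta} handling the inverse-matrix entries that enter the Berezinian on the odd side. Note that the factors are integer powers of $\pz+c(z)$ on the left and of $\pz$ on the right, not series in $z$.

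\textbf{The gap is Step~2(b).} A truncation functor is the identity on the weight spaces it keeps, so it can never carry vectors of one weight to vectors of a different weight. Yet the two singular spaces you want to identify sit in different weight spaces of any common parent. In the paper the parent is the $\og_{(p,r|r,n)}$-module $\ov\mL^\prime$, with $\tr_\pdn(\ov\mL^\prime)=\omL$ and $\tr_{(0,r|r,0)}(\ov\mL^\prime)=\ov\mL^{\prime\prime}$.
\begin{itemize}
\item The space $\mL^\sing_\xi=\omL^\sing_{\ov\xi}$ equals the $\sig$-singular space $(\ov\mL^\prime)^{\sig\mm\sing}_{\ov\xi}$ for the non-standard Borel $\mb^{\sig}$ of \exref{ex}.
\item The polynomial side is the standard-Borel singular space $(\ov\mL^\prime)^{\sing}_{\ovla^{(p,r|r,n)}}$.
\end{itemize}
Each of these is typically killed by the other truncation:
\begin{itemize}
\item The $\ep_{-1}$-coefficient of $\ov\xi$ is $-\langle \la^-_1-q\rangle$. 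So $\ov\xi\notin\Xi_{(0,r|r,0)}$ whenever $p\ge 1$ and $\la^-_1>q$. This always happens in the non-super case $q=n=0$ once $\la^-\neq\emptyset$.
\item Likewise $\ovla^{(p,r|r,n)}\notin\Xi_\pdn$ whenever $\la^-_1>q$ or $\ell(\la^+)>m$.
\end{itemize}
So the isomorphism $\Phi$ of your Step~2 does not exist as described, and "large rank" does not help.

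\textbf{The missing ingredient} is the odd-reflection isomorphism of \propref{Lth} and \propref{A-iso}. The map $\varphi(v)=Av$, with $A\in U(\og_{(p,r|r,n)})$ a product of odd root vectors, sends $(\ov\mL^\prime)^{\sing}_{\ovla^{(p,r|r,n)}}$ onto $(\ov\mL^\prime)^{\sig\mm\sing}_{\ov\xi}$. It is $\Az_{(p,r|r,n)}$-equivariant because the Gaudin algebra commutes with the diagonal action. The "weight dictionary" you invoke, $\ov\xi=(\ovla^{(p,r|r,n)})^{\sig}$, is exactly the effect of these odd reflections on highest weights; what you need is the corresponding map on vectors.

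With $\varphi$ in hand, Step~3 must be run for the parent Gaudin algebra $\Az_{(p,r|r,n)}$ twice:
\begin{itemize}
\item on the weight space $\ovla^{(p,r|r,n)}$, where it agrees with $\Az_{(0,r|r,0)}$;
\item on the weight space $\ov\xi$, where it agrees with $\Az_\pdn$.
\end{itemize}
The map $\varphi$ then glues the two. This is \thmref{gls-gls-eigen}, with $\cG=\gl_{r|r}$. Reaching a non-super $\cG=\gl_{r'}$ repeats the same truncation-plus-odd-reflection pattern (\thmref{gl-gls-eigen}).

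\textbf{A smaller point on Step~1.} The shift $-d\,\mathbbm{1}_{p|q}$ involves only the $\Ipq$ block, so it is not the weight of a one-dimensional $\gl_{p+m|q+n}$-module. It is therefore not a superdeterminant twist. The paper implements it through the coboundary central extension $\iota$. The resulting diagonal $K(z)$ entries of $\ocLz$ are exactly what produce the $(\pz+c(z))$ factors in the eigenvalue formulas.
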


\thmref{main2} provides an effective method of constructing an eigenbasis for $(\Az_\fg)_{\mL^\sing_\xi}$ from any eigenbasis for $(\Az_\cG)_{\mV^\sing_\mu}$, where $\cG$ can be chosen to be non-super.
If the completeness of the Bethe ansatz holds for $(\Az_\cG)_{\mV^\sing_\mu}$, then we will obtain an explicit eigenbasis for $(\Az_\fg)_{\mL^\sing_\xi}$ for a generic $\z$, which is described in terms of the Bethe vectors in $\mV^\sing_\mu$ and a certain element (in the universal enveloping algebra of some general linear Lie superalgebra containing both $\fg$ and $\cG$ as subalgebras) determined by a sequence of odd reflections.
The details will be given in \thmref{super-Bethe}, which we call the \emph{super Bethe ansatz} for $\Az_\fg$ on $\mL^\sing_\xi$.

We organize the paper as follows.
In \secref{Pre}, we set up our notations for general Lie (super)algebras and introduce truncation functors and unitarizable modules.
In \secref{Gaudin-alg}, we review the construction of the Gaudin algebras and discuss their basic properties.
\secref{Gaudin-Uni} details the assertions of \thmref{main1} while \secref{pf} is devoted to proving them.
In \secref{Bethe-ans}, we prove \thmref{main2} and establish the super Bethe ansatz.

\vskip 3mm
\noindent{\bf Notations.}
Throughout the paper, the symbol $\Z$ (resp., $\Zp$, $\Zn$ and $\N$) stands for the set of all (resp., non-negative, non-positive and positive) integers, the symbol $\C$ for the field of complex numbers, and the symbol $\Z_2:=\{\bz, \bo\}$ for the field of integers modulo 2. All vector spaces, algebras, tensor products, etc., are over $\C$.
{\bf We fix $p, q, m, n \in \Zp$.}
\vskip 3mm

\section{Preliminaries} \label{Pre}

In this section, we review general linear Lie (super)algebras, central extensions and parabolic categories (particularly truncation functors), and unitarizable modules.

\subsection{The general linear Lie (super)algebra}  \label{gls}
\sloppy
For $p, q, m, n \in \Zp$ that are not all zero, let
$$
 \Is=\Ipq \cup \Imn ,
$$
where
$\Ipq:=\{ -p, \ldots, -2, -1\} \cup \left\{-q+\hf, \ldots, -\thf, -\hf \right\}$
and $\Imn:=\{ 1, 2, \ldots, m\} \cup \left\{\hf, \thf, \ldots, n-\hf \right\}$ are subsets of $\hf \Z$.
We consider the total order on $\Is$:
$$
-p \lei \cdots \lei -1 \lei -q+\hf \lei \cdots \lei -\hf \lei 1 \lei \cdots \lei m \lei \hf \lei \cdots \lei n-\hf.
$$
For any $i, j \in \Is$, we say that $i \leqi j$ if $i \lei j$ or $i=j$.

Let $\{e_i \, | \, i \in \Is \}$ be a basis for the superspace $\C^{p|q}\oplus\C^{m|n}$ such that $\{e_i \, | \, i \in \Ipq \}$ and
$\{e_{i} \, | \, i \in \Imn \}$ are respectively the standard homogeneous bases for $\C^{p|q}$ and $\C^{m|n}$. In other words, the parity of $e_i$ is given by
$|e_i|=\ov{2i}\in \Z_2$ for $i \in \Is$.
For any $i, j \in \Is$, let $E_{i,j}$ denote the $\C$-linear endomorphism on $\C^{p|q}\oplus\C^{m|n}$ defined by
$$
E_{i,j} (e_k)=\de_{j, k} e_i \quad \mbox{ for $k \in \Is$,}
$$
where $\de$ is the Kronecker delta. The parity of $E_{i,j}$ is given by $|E_{i,j}|=\ov{2(i+j)}$.

Let $\gs$ denote the superspace of $\C$-linear endomorphisms on $\C^{p|q}\oplus\C^{m|n}$ with a homogeneous basis $\{ E_{i,j} \, | \, i, j \in \Is \}$.
It has a natural structure of a Lie superalgebra with commutation relations given by
$$
[E_{i,j}, E_{k, l}]=\de_{j,k} E_{i,l}-(-1)^{4(i+j)(k+l)}\de_{i, l} E_{k,j}, \qquad \mbox{for $i, j, k, l \in \Is$.}
$$
The Lie superalgebra $\gs$ is isomorphic to the general linear Lie (super)algebra $\gls$ associated to the standard homogeneous basis for the superspace $\C^{p+m|q+n}$.
Let
$$
\mbs=\bigoplus_{\substack{ i,j \in \Is, i \le_\Is j} }  \C E_{i,j}
$$
be a Borel subalgebra of $\gs$.
The corresponding Cartan subalgebra $\mhs$ of $\gs$ has a basis $\{ \Eii \, | \, i \in \Is \}$.
The dual basis in $\mhs^{*}$ is denoted by the set $\{ \ep_i \, | \, i \in \Is \}$, where the parity of $\ep_i$ is given by $|\ep_i|=\ov{2i}$.
For $\xi \in \mh_\pdn^{*}$, let $L(\gs, \xi)$ denote the irreducible highest weight $\gs$-module with highest weight $\xi$ with respect to $\mbs$.

\subsection{Central extensions and parabolic categories} \label{P-cat}
\subsubsection{Central extensions}
Let $\ogs$ denote the central extension of $\gs$ by a one-dimensional center $\C K$ determined by the 2-cocycle
$$
\uptau(A, B)=\text{Str}([\mathfrak{J},A]B) \qquad \text{for} \quad A,B\in \gs
$$
(cf. \cite[p. 115]{CLW12} or \cite[p. 99]{CL03}), where $\mathfrak{J}:=-\sum_{r \in \Ipq} E_{r,r}$, and $\text{Str}$ denotes the supertrace on $\gs$ defined by $\text{Str}(E_{i,j})=(-1)^{2i} \de_{i,j}$ for $i,j \in \Is$.
That is, $\ogs=\gs \oplus \C K$ as superspaces, and $\ogs$ is a Lie superalgebra with commutation relations given by
$$
[A+aK, B+bK]_{\ogs}:=[A, B]+\uptau(A,B)K.
$$
for $A, B \in \gs$ and $a, b \in \C$.
Also, $\uptau$ is a coboundary, and we have a Lie superalgebra isomorphism
$
\iota: \gs\oplus \C K \longrightarrow \ogs
$
given by
\begin{equation}\label{iso-e}
  \iota(A)=A+\mbox{Str}(\mathfrak{J} A)K, \quad \mbox{for $A \in \gs$, \quad and \quad $\iota(K)=K$.}
\end{equation}
We write $\oE_{i,j}:=(E_{i,j}, 0) \in \ogs$ for all $i, j \in \Is$.
We will always assume that every $\ogs$-module $M$ is viewed as a $\gs$-module obtained by
pulling back the action on $M$ via the isomorphism $\iota$.
We write $M^\iota$ to emphasize that $M$ is viewed as a $\gs$-module.

Let $\obs=\mbs \oplus \C K$ be a Borel subalgebra of $\ogs$.
The corresponding Cartan subalgebra $\ohs:=\mhs \oplus \C K$ of $\ogs$ has a basis $\{ \oEii \, | \, i \in \Is \}\cup \{K\}$.
The dual basis in $\oh_\pdn^*$ is given by $\{ \ep_i  \, | \, i \in \Is \} \cup \{\Lambda_0 \}$,
where $\La_0 \in \oh_\pdn^*$ is defined by
 $$
 \La_0(K)=1 \qquad \textrm{ and }   \qquad \La_0(\oEii)=0 \qquad \mbox{for $\, i \in \Is$.}
 $$
For $\xi \in \oh_\pdn^*$, let $L(\ogs, \xi)$ denote the irreducible highest weight $\ogs$-module with highest weight $\xi$ with respect to $\obs$.

\subsubsection{Parabolic categories}

Let $\Phi^+$ denote the set of all positive roots of $\ogs$ with respect to $\obs$, i.e.,
$$
\Phi^+=\setc*{\ep_i-\ep_j }{i,j \in \Is,\, i \lei j }.
$$
Let
$$
Y^+=\setc*{\ep_i-\ep_j }{i,j \in \Is,\, i \lei j \leqi -\hf \ \  \mbox{or}\ \  1\leqi i \lei  j }  \subseteq \Phi^+,
$$
and let
$$\ov{\mf{l}}_\pdn=\ohs \oplus \left(\bigoplus\limits_{\pm \al \in Y^+ } (\ogs)_\al \right)$$
be the Levi subalgebra of $\ogs$ associated to $Y^+$, where $(\ogs)_\al$ denotes the root space of $\ogs$ corresponding to $\al$.
Note that $\ov{\mf{l}}_\pdn \cong  \gl_{p|q} \oplus  \gl_{m|n} \oplus \C K$.
Define $\ov{\mf{p}}_\pdn=\ov{\mf{l}}_\pdn+\obs$, which is a parabolic subalgebra of $\ogs$.

A partition is a finite sequence $\mu:=(\mu_1, \mu_2, \ldots )$ of non-negative integers in decreasing order, i.e., $\mu_i \ge \mu_{i+1}$ for all $i \in \N$.
We denote by $\mu^\prime:=(\mu_1^\prime, \mu_2^\prime, \ldots)$ the conjugate partition of $\mu$ and by $\ell(\mu)$ the length of $\mu$.
A partition $\mu$ is called an \emph{$(m|n)$-hook partition} if $\mu_{m+1} \le n$ (or equivalently, $\mu'_{n+1} \le m$).
We denote by $\Pmn$ the set of all $(m|n)$-hook partitions.

For $a \in \C$, let
$$
\cP_\pdn^a=\{ \ovla=(\la^+, \la^-; a)\,|\, \la^+\in \Pmn, \la^-\in \cP_{p|q}\}
$$
and
$$
 \cP_\pdn=\bigcup_{a\in \C}\cP_\pdn^a.
$$
Define $\langle r \rangle={\rm max}\{r, 0 \}$ for $r \in \Z$.
For $\ovla \in \cP_\pdn^a$, we define
{\allowdisplaybreaks
\begin{eqnarray}\label{hlas}
 \ovlas=& &-\sum_{r=1}^{p} \langle \la^{-}_{r}-q \rangle \ep_{-r}- \sum_{r=1}^{q} (\la^{-})^\prime_{r} \ep_{-r+\hf} \nonumber  \\
& &  +\sum_{i=1}^m \la^+_i \, \ep_i+\sum_{i=1}^n\langle(\la^+)'_i-m\rangle \,\ep_{i-\hf }+a \La_0 \,\, \, \in \oh_\pdn^*.
\end{eqnarray}
}

\begin{rem}
Let $p',q',m',n' \in \Zp$ be such that $p'\le p$, $q' \le q$, $m' \le m$, and $n' \le n$.
For $\ovla\in \cP_{(p',q|m,n)}^a$ (resp., $\cP_{(p,q|m,n')}^a$), it is obvious that $\ovla^{(p',q|m,n)}=\ovlas$ (resp., $\ovla^{(p,q|m,n')}=\ovlas$).
But for $\ovla\in \cP_{(p,q'|m,n)}^a$ (resp., $\cP_{(p,q|m',n)}^a$), $\ovla^{(p,q'|m,n)}\not=\ovlas$ (resp., $\ovla^{(p,q|m',n)}\not=\ovlas$) in general if $m'\not=m$ (resp., $q'\not=q$).
\end{rem}

For $\xi \in \oh_\pdn^*$, let $L(\ov{\mf{l}}_\pdn, \xi)$ be the irreducible highest weight $\ov{\mf{l}}_\pdn$-module with highest weight $\xi$ with respect to the Borel subalgebra $\ohs \oplus \big( \bigoplus_{ \al \in Y^+ } (\ogs)_\al \big)$.

Let $\Os$ be the category of $\ogs$-modules $M$ satisfying the conditions (\cite[Section II.D]{CCL}, \cite[Section 7]{CLW12}) (cf. \cite{CL10, CLW11}):
\begin{enumerate}[\normalfont(i)]

\item $M$ is a semisimple $\ohs$-module with finite-dimensional $\mu$-weight spaces $M_{\mu}$, where $\mu \in \oh_\pdn^*$.

\item $M$ decomposes over $\ov{\mf{l}}_\pdn$ into a direct sum of $L \big(\ov{\mf{l}}_\pdn, \ovla^\pdn \big)$ for $\ovla \in \cP_\pdn$.

\item There exist finitely many weights $(\ovla^1)^\pdn,\ldots,(\ovla^k)^\pdn$ for some $\ovla^1, \ldots \ovla^k \in \cP_\pdn$ (depending on $M$) such that if $\mu$ is a weight of $M$, then
$(\ovla^i)^\pdn-\mu$ is a linear combination of simple roots with coefficients in $\Zp$ for some $i$.

\end{enumerate}

The morphisms in the categories are even $\ogs$-homomorphisms, and the categories are abelian.
For $a \in \C$, let $\Os^a$ be the full subcategory of $\Os$ consisting of the objects $M \in \Os$ such that $Kv=av$ for all $v \in M$.
Evidently, $\Os =\bigoplus_{a \in \C} \Os^a$.

Define
$$
\Xis= \sum_{i \in \Ipq} \Zn \ep_{i}+ \sum_{i \in \Imn} \Z_+ \ep_i + \C \La_0.
$$
For $\varepsilon=0$ or $1$, we define
$$
{\Xis}(\ov\varepsilon)=
\setc[\Big]{\mu \in {\Xis} }{ \sum_{i \in \Is \cap (\hf+\Z)}\mu(\oE_{i,i})\equiv \varepsilon \,\,(\text{mod }2)}.
$$
For $M\in  \Os$, ${M}={M}_{\ov{0}}\bigoplus {M}_{\ov{1}}$  is a $\Z_2$-graded vector space, where
 \begin{equation}\label{wt-Z2-gradation}
{M}_{\ov\varepsilon}:=\bigoplus_{\mu \in \Xis(\ov\varepsilon)}{M}_{\mu}, \qquad \mbox{ for $\varepsilon= 0, 1.$}
 \end{equation}
The $\Z_2$-gradation on $M$ is clearly compatible with the action of $\ogs$.
Note that for $M, N \in \Os$, the $\Z_2$-gradation on $M\otimes N$ given by \eqnref{wt-Z2-gradation} coincides with the $\Z_2$-gradation on $M\otimes N$ induced from the $\Z_2$-gradations on $M$ and $N$ given by \eqref{wt-Z2-gradation}.

We have the following lemma (see the paragraph preceding \cite[Theorem 6.4]{CW}).
\begin{lem} \label{weight-Xi}
Let $M \in \Os$. If $\mu$ is a weight of $M$, then $\mu \in \Xis$.
\end{lem}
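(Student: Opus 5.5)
Since $\Xis$ is a subgroup of $\oh_\pdn^*$, the plan is to first locate the highest weights $\ovlas$ in $\Xis$, and then show that moving down by positive roots keeps us inside $\Xis$.

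First, by \eqnref{hlas}, for every $\ovla \in \cP_\pdn$ the weight $\ovlas$ has non-positive integer coefficients on $\ep_i$ for $i\in\Ipq$, non-negative integer coefficients on $\ep_i$ for $i \in \Imn$, and an arbitrary multiple of $\La_0$. Hence $\ovlas \in \Xis$.

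Next, let $\mu$ be a weight of $M$. By condition (ii), $M$ is a direct sum of modules $L(\ov{\mf{l}}_\pdn, \ovlas)$, so $\mu$ is a weight of one such summand. It therefore suffices to prove the claim for $L=L(\ov{\mf{l}}_\pdn,\ovlas)$. Since $\ov{\mf{l}}_\pdn \cong \gl_{p|q}\oplus\gl_{m|n}\oplus\C K$, we have $L \cong L^- \otimes L^+$, where $L^-$ is an irreducible $\gl_{p|q}$-module and $L^+$ is an irreducible $\gl_{m|n}$-module, with $K$ acting by $a$. The weights of $L$ are sums of weights of $L^-$ and $L^+$ plus $a\La_0$. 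By \eqnref{hlas}, the highest weight of $L^+$ is the standard highest weight attached to the hook partition $\la^+ \in \Pmn$. So $L^+$ is an irreducible polynomial $\gl_{m|n}$-module: a composition factor of some $(\C^{m|n})^{\otimes k}$, by Sergeev/Berele–Regev theory. Consequently every weight of $L^+$ is a sum of $k$ vectors $\ep_i$ with $i\in\Imn$, i.e. it lies in $\sum_{i\in\Imn}\Zp\ep_i$.

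Similarly, the highest weight of $L^-$ is the negative of the lowest-weight datum of the polynomial module attached to $\la^-\in\cP_{p|q}$, up to ordering conventions. Thus $L^-$ is the dual of an irreducible polynomial $\gl_{p|q}$-module, i.e. a composition factor of $((\C^{p|q})^*)^{\otimes k'}$. Its weights are therefore sums of vectors $-\ep_i$ with $i\in\Ipq$, and so lie in $\sum_{i\in\Ipq}\Zn\ep_i$. Combining the two, $\mu\in\Xis$.

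The main obstacle is the identification of $L^-$ with a dual polynomial module. Its highest weight with respect to the Borel subalgebra given by $Y^+$ is
$$-\sum_{r}\langle\la^-_r-q\rangle\ep_{-r}-\sum_r(\la^-)'_r\ep_{-r+\hf},$$
and we must check that this equals minus the lowest weight of the polynomial module of shape $\la^-$ in the reversed ordering $-p<\cdots<-1<-q+\hf<\cdots$. Because the order on the indices is reversed, this amounts to the odd–even-swapped highest-weight formula for the hook partition $\la^-$ of type $(q|p)$. If matching the conventions directly is cumbersome, I would instead apply the automorphism $E_{i,j}\mapsto -(-1)^{\cdots}E_{j,i}$ of $\gl_{p|q}$, which turns $L^-$ into a highest weight module whose highest weight is exactly the standard polynomial weight of $\la^-$ with the parities swapped.
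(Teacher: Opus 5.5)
Your argument is correct and is essentially the one behind the paper's citation of Cheng--Wang: by condition (ii) every weight of $M$ is a weight of some Levi constituent $L(\ov{\mf l}_\pdn,\ovlas)$, whose $\gl_{m|n}$-factor is the polynomial module of shape $\la^+$ and whose $\gl_{p|q}$-factor is the dual of the polynomial module of shape $\la^-$. As you note, the latter identification is exactly the odd-reflection highest-weight formula (Theorem 2.55 of Cheng--Wang's book). Do delete your opening remark, though: $\Xis$ is a cone, not a subgroup, and it is not stable under subtracting positive roots (e.g.\ $-(\ep_1-\ep_2)\notin\Xis$ when $m\ge 2$), which is precisely why the Levi decomposition (ii), rather than the highest-weight bound (iii), has to carry the proof.
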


By \cite[Theorem 3.27 and Theorem 6.4]{CW} (see also the proof of \cite[Theorem 6.2.2]{Lus}), we have:

\begin{prop} \label{tensor-cat}
The category $\Os$ is a tensor category.
\end{prop}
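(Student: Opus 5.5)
To show that $\Os$ is a tensor category, it suffices to check that for $M, N \in \Os$ the tensor product $M \otimes N$, with $\ogs$ acting via the coproduct $x \mapsto x\otimes 1 + 1 \otimes x$ for $x \in \gs$ and $K \mapsto K\otimes 1 + 1\otimes K$, again satisfies conditions (i)--(iii). The associativity, unit and symmetry constraints are inherited from super vector spaces. The unit object is the trivial module $\C$, which lies in $\Os^0$ since it is $L(\ov{\mf{l}}_\pdn, \ovla^\pdn)$ for $\ovla=(\emptyset,\emptyset;0)$. Note also that $K$ acts on $M\otimes N$ by $a+b$ when $M\in\Os^a$ and $N\in\Os^b$, so it is enough to treat $M\in \Os^a$, $N\in \Os^b$.

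\textbf{Condition (i).} The module $M\otimes N$ is clearly $\ohs$-semisimple, with weights of the form $\mu+\nu$. By \lemref{weight-Xi}, the weights of $M$ and $N$ lie in $\Xis$. By (iii), they are bounded above by finitely many weights $(\ovla^i)^\pdn$ in the dominance order. Fix a target weight $\ga$ and suppose $\mu+\nu=\ga$ with $\mu \le (\ovla^i)^\pdn$ and $\nu\le (\ovla^j)^\pdn$. Then both $(\ovla^i)^\pdn-\mu$ and $(\ovla^j)^\pdn-\nu$ are $\Zp$-combinations of simple roots with fixed sum $(\ovla^i)^\pdn+(\ovla^j)^\pdn-\ga$. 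This leaves only finitely many possible pairs $(\mu,\nu)$, so $(M\otimes N)_\ga$ is finite-dimensional. The same argument gives (iii), with the finitely many bounding weights $(\ovla^i)^\pdn+(\ovla^j)^\pdn$. Each of these is again of the form $\ovla^\pdn$, by the Littlewood--Richardson rule for $\gl_{m|n}$ and $\gl_{p|q}$ polynomial modules together with additivity in the $\La_0$-component.

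\textbf{Condition (ii).} This is the main obstacle. Here $\ov{\mf{l}}_\pdn\cong \gl_{p|q}\oplus\gl_{m|n}\oplus\C K$. The module $L(\ov{\mf{l}}_\pdn,\ovla^\pdn)$ is (up to the $\La_0$-twist) an outer tensor product: a polynomial $\gl_{m|n}$-module indexed by $\la^+$, tensored with the dual of a polynomial $\gl_{p|q}$-module indexed by $\la^-$. The sign conventions in \eqnref{hlas} reflect exactly this. We therefore need two facts.
\begin{enumerate}[(a)]
\item Tensor products of polynomial $\gl_{m|n}$-modules (and of their duals) are completely reducible, with constituents indexed by hook partitions. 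This is Sergeev and Berele--Regev.
\item $M$ and $N$, which are direct sums of such $\ov{\mf{l}}_\pdn$-modules, have tensor product that is again such a direct sum.
\end{enumerate}
Since infinite direct sums are involved, I would argue summand-by-summand: the tensor product of two $\ov{\mf{l}}_\pdn$-summands is finite-dimensional on each weight space and is completely reducible by (a). Summing over the summands gives (ii). The delicate point is that the $\Z_2$-grading \eqnref{wt-Z2-gradation} must be compatible, so that the decomposition is into even morphisms. The remark following \eqnref{wt-Z2-gradation} guarantees this.

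Finally, I would cite \cite[Theorem 3.27, Theorem 6.4]{CW} for the full statement, since super duality identifies $\Os$ with a truncation of a category where tensor closure is known. Alternatively, the proof of \cite[Theorem 6.2.2]{Lus} gives the complete reducibility for the integrable Levi part.
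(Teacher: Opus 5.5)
Your proof is correct and is essentially an unpacking of the paper's argument, which consists only of citing \cite[Theorems 3.27 and 6.4]{CW} together with Lusztig's complete-reducibility argument. In your version, (i) and (iii) come from the finitely many upper bounds on weights, and (ii) reduces, after distributing the direct sums, to complete reducibility of tensor products of the finite-dimensional $\ov{\mf{l}}_\pdn$-summands, i.e.\ the Sergeev/Berele--Regev result that the paper cites as \cite[Theorem 3.27]{CW}. Your closing appeal to super duality is unnecessary, and it is not how the paper uses those references; the direct verification already suffices.
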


The following lemma (cf. \cite[Lemma 4.1]{CLW12}) can be proved by an argument analogous to the proof of \cite[Lemma 3.1]{CLW11}.

\begin{lem} \label{verma-Os}
For any $\ovla \in \cP_\pdn^a$, we have $L(\ogs, \ovla^\pdn) \in \Os^a$.
\end{lem}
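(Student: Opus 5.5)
We must show that $L(\ogs, \ovlas)$ satisfies conditions (i)--(iii) defining $\Os$, and that $K$ acts by $a$. Write $L:=L(\ogs,\ovlas)$. First, $K$ is central in $\ogs$ and acts on the highest weight vector $v$ by $\ovlas(K)=a$, since $\ovlas$ contains the term $a\La_0$. Because $L=U(\ogs)v$, it follows that $K$ acts by $a$ on all of $L$. Likewise, $L$ is spanned by vectors $y_1\cdots y_k v$ with $y_j$ negative root vectors. So $L$ is $\ohs$-semisimple, every weight lies in $\ovlas-\sum\Zp\Phi^+$, and condition (iii) holds with the single weight $\ovlas$. Once (ii) is known, Lemma \ref{weight-Xi} also applies, though we will not need it.

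The substance is (ii) together with finite dimensionality of weight spaces. Following \cite[Lemma 3.1]{CLW11}, we realize $L$ as a quotient of the parabolic Verma module
$$
M:=\operatorname{Ind}_{\ov{\mf p}_\pdn}^{\ogs} L\big(\ov{\mf l}_\pdn,\ovlas\big),
$$
where the nilradical $\mf u$ of $\ov{\mf p}_\pdn$ acts trivially. The restriction of $\ovlas$ to the $\gl_{m|n}$ part is exactly the hook highest weight attached to $\la^+\in\Pmn$. By the Sergeev--Berele--Regev theory, $L(\ov{\mf l}_\pdn,\ovlas)$ restricted to $\gl_{m|n}$ is therefore an irreducible polynomial module, hence finite-dimensional. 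The $\gl_{p|q}$ part of $\ovlas$ is the negative of a hook weight with respect to the ordering $-p<\cdots<-\hf$. So on that factor we get the dual of a polynomial module, again finite-dimensional. (One checks that the dual of the polynomial module $L(\gl_{p|q},\la^-)$ has exactly this highest weight with respect to our Borel.) Thus $L(\ov{\mf l}_\pdn,\ovlas)$ is a finite-dimensional irreducible module of the form polynomial $\boxtimes$ dual polynomial, tensored with the character $a$ of $K$.

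The opposite nilradical $\mf u^-$ is spanned by the $\oE_{i,j}$ with $i\in\Imn$ and $j\in\Ipq$. As a module over $\gl_{m|n}\oplus\gl_{p|q}$ it is $\C^{m|n}\otimes(\C^{p|q})^*$, up to parity. Hence
$$
M\cong S(\mf u^-)\otimes L\big(\ov{\mf l}_\pdn,\ovlas\big)
$$
as $\ov{\mf l}_\pdn$-modules. The supersymmetric algebra $S(\mf u^-)$ decomposes by super Howe duality / super Cauchy identity \cite{CW} into $\bigoplus_\mu L(\gl_{m|n},\mu)\boxtimes L(\gl_{p|q},\mu)^*$, summed over partitions $\mu$ that are both $(m|n)$- and $(p|q)$-hook. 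Tensor products of polynomial modules are semisimple with polynomial constituents (Littlewood--Richardson), and the same holds for duals. So $M$ is a direct sum, over $\ov{\mf l}_\pdn$, of modules $L(\ov{\mf l}_\pdn,\ov\nu^\pdn)$ with $\ov\nu\in\cP^a_\pdn$. Each $\ov{\mf l}_\pdn$-weight space of $M$ is finite-dimensional. Indeed, $\oE$ of the $\Imn$-part of the identity acts on $S^k(\mf u^-)$ by $k$, so only finitely many graded pieces contribute to a given weight, and each piece is finite-dimensional.

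Finally, $L$ is a quotient of $M$ by an $\ogs$-submodule, hence by an $\ov{\mf l}_\pdn$-submodule. Semisimplicity over $\ov{\mf l}_\pdn$ therefore passes to the quotient, giving (ii) and finite-dimensionality of weight spaces in (i). The main obstacle is the bookkeeping in identifying the $\ov{\mf l}_\pdn$-highest weights of constituents exactly with the weights $\ov\nu^\pdn$ defined in \eqnref{hlas}, including the shifts $\langle\cdot\rangle$ and the conjugate-partition entries. This requires checking carefully the highest weights of polynomial and dual polynomial $\gl$-supermodules with respect to the chosen Borel; I would do this directly for $S(\C^{m|n}\otimes(\C^{p|q})^*)$ via its explicit highest weight vectors.
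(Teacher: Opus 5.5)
Your argument is correct and follows the route the paper intends. The paper gives no details of its own and only points to the argument of \cite[Lemma 3.1]{CLW11}: realize $L(\overline{\mathfrak g}_{(p,q|m,n)},\overline{\lambda}^{(p,q|m,n)})$ as the irreducible quotient of the parabolic Verma module $U(\mathfrak u^-)\otimes L(\overline{\mathfrak l}_{(p,q|m,n)},\overline{\lambda}^{(p,q|m,n)})$, then decompose that module over the Levi subalgebra using polynomial-module theory, exactly as you do.

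There is one small slip in wording. The $\mathfrak{gl}_{p|q}$-part of $\overline{\lambda}^{(p,q|m,n)}$ is the negative of a hook highest weight for the \emph{reversed} order $-\frac{1}{2}<\cdots<-q+\frac{1}{2}<-1<\cdots<-p$, not for $-p<\cdots<-\frac{1}{2}$. That is precisely why it is the highest weight, for the paper's Borel, of the dual of the polynomial module attached to $\lambda^-$, as your parenthetical check correctly states.
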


We also have the following.

\begin{lem}\label{weight=0}
 Let $M, N \in \Os$, and let $\mu$ and $\ga$ be weights of $M$ and $N$, respectively. Then
$$
(\mu+\ga)(\oEii)=0\quad\mbox{if and only if }\quad \mu(\oEii)=0 \quad \text{and} \quad \ga(\oEii)=0,\qquad
\mbox{for $i \in \Is$.}
$$
\end{lem}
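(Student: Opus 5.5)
By Lemma~\ref{weight-Xi}, both $\mu$ and $\ga$ lie in $\Xis$. So it suffices to show that every weight in $\Xis$ has a fixed sign at each coordinate $i\in\Is$. Recall
$$
\Xis= \sum_{i \in \Ipq} \Zn \ep_{i}+ \sum_{i \in \Imn} \Z_+ \ep_i + \C \La_0 .
$$
Hence for $i \in \Ipq$ we have $\mu(\oEii)\in \Zn$ and $\ga(\oEii)\in\Zn$. For $i\in\Imn$ we have $\mu(\oEii)\in\Zp$ and $\ga(\oEii)\in\Zp$. The $\La_0$-component does not contribute to $\oEii$, since $\La_0(\oEii)=0$.

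Fix $i\in\Is$. The implication from right to left is trivial, because $(\mu+\ga)(\oEii)=\mu(\oEii)+\ga(\oEii)$.

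For the converse, suppose the sum vanishes. Both summands are integers of the same sign: non-positive if $i\in\Ipq$, non-negative if $i\in\Imn$. If two non-negative integers sum to zero, both are zero. The same holds for two non-positive integers. Therefore $\mu(\oEii)=0$ and $\ga(\oEii)=0$.

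There is no real obstacle; the only input is Lemma~\ref{weight-Xi}, which places all weights of objects in $\Os$ inside $\Xis$. One should not try to use the weight $\mu+\ga$ of $M\otimes N$ directly. Applying Lemma~\ref{weight-Xi} to it via Proposition~\ref{tensor-cat} gives sign information only about the sum, not about the individual summands. The argument must therefore apply Lemma~\ref{weight-Xi} to $M$ and $N$ separately.
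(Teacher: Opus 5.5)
Your proof is correct and matches the paper's own argument. The paper uses the same sign constraints $\mu(\oEii),\ga(\oEii)\in\Zn$ for $i\in\Ipq$ and $\mu(\oEii),\ga(\oEii)\in\Zp$ for $i\in\Imn$, which come from \lemref{weight-Xi} applied to $M$ and $N$ separately, and then observes that two integers of the same sign summing to zero must both vanish.
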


\begin{proof}
This follows immediately from the property that $\mu(\oEii)\in \Zn$ and $\ga(\oEii)\in \Zn$, for $i \in \Ipq$, and $\mu(\oEii) \in \Zp$ and $\ga(\oEii) \in \Zp$, for $i \in \Imn$.
\end{proof}

\subsubsection{Truncation functors}\label{tr}
For $p', q', m', n' \in \Z_+$ with
$p' \le p$, $q' \le q$, $m' \le m$, and $n' \le n$,
we may regard $\Xi_\pdnp \subseteq \Xi_\pdn$.
The truncation functor
$\tr^\pdn_\pdnp: \Os \longrightarrow \OO_\pdnp$ is defined by
\begin{equation} \label{trunc-def}
\tr^\pdn_\pdnp(M)=\bigoplus_{\nu \in \Xi_\pdnp}M_{\nu}, \qquad \mbox{for $\, M\in \Os$}.
\end{equation}
For simplicity, we write $\tr_\pdnp=\tr^\pdn_\pdnp$.
Note that the direct sum on the right-hand side of \eqref{trunc-def} belongs to $\OO_\pdnp$
by \cite[Lemma 6.1]{CLW12} and the fact that
$[\oEii, A]=0$ for all $A \in \ov{\fg}_\pdnp$
and $i \notin \Is_\pdnp$.
Also, for $f \in {\rm Hom}_{\Os}(M, N)$,
the map
$\tr_\pdnp(f)$ is defined to be the restriction of
$f$ to $\tr_\pdnp(M)$.
Evidently, $\tr_\pdnp$ is an exact functor. By \lemref{weight=0}, $$\tr_\pdnp(M\otimes N)=\tr_\pdnp(M)\otimes \tr_\pdnp(N)$$
for $M, N \in \Os$, and hence 	$\tr_\pdnp$ is a tensor functor.

The following proposition is similar to \cite[Lemma 3.2]{CLW11} (cf. \cite[Proposition 6.9]{CW} and \cite[Proposition 7.5]{CLW15}) and can be proved by repeating the arguments therein. Note that $\cP_\pdnp^a\subseteq \cP_\pdn^a$ if  $p' \le p$, $q' \le q$, $m' \le m$, and $n' \le n$.

\begin{prop}\label{trun-sep}
For $p', q', m', n' \in \Z_+$ with $p' \le p$, $q' \le q$, $m' \le m$, and $n' \le n$ and $\ovla \in \cP_\pdn$,
Then
$$
 \tr_\pdnp \big(L \big(\ogs, \ovla^\pdn \big)\big)
=\begin{cases}
     L \big(\og_\pdnp, \ovla^\pdnp \big) &\,\,\, \mbox{if $\ovla\in \cP_\pdnp$};\\
       0                                                   &\,\,\,  \mbox{otherwise}.
     \end{cases}
$$
\end{prop}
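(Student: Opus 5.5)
The approach is the standard truncation argument of \cite[Lemma 3.2]{CLW11}. Set $L:=L(\ogs,\ovla^\pdn)$, which lies in $\Os^a$ by \lemref{verma-Os}, and put $M:=\tr_\pdnp(L)$, which lies in $\OO_\pdnp$ as explained after \eqref{trunc-def}.

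We will use two features of $\ovla^\pdn$ and of the weights of $L$.

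\begin{enumerate}[\normalfont(a)]
\item Every weight $\mu$ of $L$ has the form $\ovla^\pdn-\sum c_\al \al$ with $c_\al\in\Zp$ over simple roots $\al$ of $\obs$.
\item By \lemref{weight-Xi}, $\mu(\oEii)\le 0$ for $i\in\Ipq$ and $\mu(\oEii)\ge 0$ for $i\in\Imn$.
\end{enumerate}

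The key observation concerns the indices $i\in \Is\setminus\Is_\pdnp$. These are the "outer" indices $-p,\dots,-p'-1$, $-q+\hf,\dots,-q'-\hf$, $m'+1,\dots,m$, and $n'+\hf,\dots,n-\hf$. We must show that the weights vanishing on all of these $\oEii$ form a set closed under the action of $\og_\pdnp$. That action is clear, since $[\oEii,\og_\pdnp]=0$ for such $i$.

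\textbf{Step 1: $M\neq0$ iff $\ovla\in\cP_\pdnp$.} Suppose first that $\ovla\in\cP_\pdnp$. Then, using the remark on compatibility of $\ovla^\pdnp$ with $\ovla^\pdn$ (interpreted via the inclusion $\Xi_\pdnp\subseteq\Xi_\pdn$), we get $\ovla^\pdn=\ovla^\pdnp\in\Xi_\pdnp$. Hence the highest weight vector $v$ of $L$ lies in $M$, and $M\ne0$.

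Conversely, suppose $\ovla\notin\cP_\pdnp$. Then some outer coordinate of $\ovla^\pdn$ is nonzero: for instance $\la^+_{m'+1}>0$, or $\langle(\la^+)'_{n'+1}-m\rangle>0$, and similarly on the minus side. We need a sign/ordering argument showing that no weight of $L$ can have all outer coordinates zero. Subtracting positive roots moves weight from earlier to later indices in the total order, and by (b) all coordinates have fixed signs. This is the delicate part, because the order interleaves the blocks: $m'+1,\dots,m$ sit before $\hf,\dots$. My first attempt would be to take partial sums $S_k(\mu)=\sum_{i\le_\Is k}\mu(\oEii)$. Positive-root subtraction can only decrease $S_k$, and the sign constraints then force a contradiction with the outer coordinate being zero.

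If this is messy because of the odd/even interleaving (e.g.\ when $q'<q$ and $m'<m$ simultaneously, as the preceding remark warns), I would instead reduce to truncating one index at a time. The functors compose, $\tr_{\pdnp}=\tr\circ\tr$, so it suffices to treat four elementary truncations ($p\to p-1$, etc.), each of which is a single-corner argument like \cite[Lemma 3.2]{CLW11}.

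\textbf{Step 2: irreducibility when $\ovla\in\cP_\pdnp$.} We show $M$ is a highest weight module. The vector $v$ is $\ov{\fb}_\pdnp$-singular of weight $\ovla^\pdnp$.

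To see that $M$ is irreducible, take any nonzero submodule $N\subseteq M$ and a $\ov{\fb}_\pdnp$-singular vector $w\in N$; such a vector exists by the finiteness conditions of $\OO_\pdnp$. We claim $w$ is also $\obs$-singular. Any positive root vector of $\ogs$ not in $\og_\pdnp$ involves an outer index. Applied to $w$, it produces a weight with a nonzero outer coordinate of the wrong sign, for instance raising $\ep_{m'+1}$-coordinate $0$ to a negative value, or the analogous violation of (b). Such a weight does not occur in $L$, so the result is zero. More precisely, root vectors $\oE_{i,j}$ with $i\lei j$ and exactly one of $i,j$ outer give a weight outside $\Xis$, while both outer give weight in $\Xis$ but handled via the same vanishing.

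Irreducibility of $L$ then forces $w\in\C v$, so $N=M$. Also $M=U(\og_\pdnp)v$: any weight vector in $M$ lies in $U(\ov{\mf n}^-)v$, and PBW terms involving outer lowering operators would leave $\Xi_\pdnp$ without being compensated. Hence $M\cong L(\og_\pdnp,\ovla^\pdnp)$.

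\textbf{Main obstacle.} I expect the main obstacle to be the sign bookkeeping in Step 1 and in the singularity claim, in the presence of odd indices and the $\langle\cdot\rangle$ in \eqref{hlas}. I would handle it through the one-index-at-a-time reduction.
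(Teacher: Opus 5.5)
Your argument is the corner argument of \cite[Lemma 3.2]{CLW11}, and it works for the truncations $p'<p$ and $n'<n$. There the removed indices $-p,\dots,-p'-1$ (resp.\ $n'+\hf,\dots,n-\hf$) form an initial (resp.\ terminal) segment of $\lei$. Hence $\ovla^\pdn=\ovla^\pdnp$, your partial sums behave as claimed, and any root vector of $\obs$ involving a removed index pushes $\Xi_\pdnp$ out of $\Xis$.

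The argument breaks down in general as soon as $q'<q$ or $m'<m$. This is exactly the case needed in \secref{pf}, where one truncates from $(p,r|r,n)$ to $(p,q|m,n)$. The removed indices $-q+\hf,\dots,-q'-\hf$ and $m'+1,\dots,m$ sit in the middle of $\lei$, and there the identity $\ovla^\pdn=\ovla^\pdnp$ of your Step~1 is false. This is what the remark after \eqref{hlas} is about: for instance, the coefficient of $\ep_{m'+1}$ in $\ovla^\pdn$ is $\la^+_{m'+1}$, which may be positive for $\la^+\in\cP_{m'|n}$.

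Here is a concrete instance. Take $(p,q|m,n)=(0,0|1,1)$, $(p',q'|m',n')=(0,0|0,1)$ and $\ovla=((1),\emptyset;a)$. Then $\ovla^\pdn=\ep_1+a\La_0$, and $L(\ogs,\ovla^\pdn)$ is the natural module $\C^{1|1}$ with $K$ acting by $a$. Moreover $M=\C e_{\hf}$ and $\ovla^\pdnp=\ep_{\hf}+a\La_0$, so the proposition holds. However, the highest weight vector $e_1$ is not in $M$. Also $e_{\hf}$ is $\ob_\pdnp$-singular without being $\obs$-singular, because $\oE_{1,\hf}e_{\hf}=e_1$ and $\ep_1+a\La_0\in\Xis$.

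So the following all fail in this case: Step~1, the vanishing claim in Step~2 for root vectors with one removed index, and $M=U(\og_\pdnp)v$. The partial-sum argument also gives nothing, since weight can flow out of a middle block as well as into it. Worse, a nonzero removed coordinate of $\ovla^\pdn$ does not even detect $\ovla\notin\cP_\pdnp$, as the example shows. Truncating one index at a time does not help, because $q\to q-1$ and $m\to m-1$ are still middle truncations (unless $p=0$, resp.\ $n=0$).

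The missing idea is to change the Borel subalgebra before truncating, which is exactly what \exref{ex} provides. Take $\sig=\sig_{(q',m')}$. Then:
\begin{itemize}
\item $L(\ogs,\ovla^\pdn)=L^{\sig}(\ogs,\ov\xi^{\sig})$ by odd reflections \cite[Theorem 2.55]{CW};
\item in the order $\lesig$, the removed indices in $\Ipq$ form an initial segment and those in $\Imn$ form a terminal segment;
\item $\ob^{\sig}\cap\og_\pdnp=\ob_\pdnp$;
\item by \eqref{si-weight}, $\ov\xi^{\sig}$ vanishes on every $\oE_{i,i}$ with $i$ removed if and only if $\la^-_{p'+1}\le q'$ and $\la^+_{m'+1}\le n'$, i.e.\ iff $\ovla\in\cP_\pdnp$, in which case $\ov\xi^{\sig}=\ovla^\pdnp$.
\end{itemize}
Now replace $\lei$ by $\lesig$ and $v$ by the $\ob^{\sig}$-highest weight vector. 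Your partial-sum argument (nonpositive coordinates on the initial segment, nonnegative on the terminal one) then goes through verbatim. So does your singular-vector argument, since every $\ob^{\sig}$ root vector involving a removed index now sends a weight of $\Xi_\pdnp$ outside $\Xis$.
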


\subsubsection{Singular spaces}
 {\bf From now on, we will fix $p, q, m, n \in \Z_+$ and write $\mg=\gs$, $\mb=\mbs$, $\mh=\mhs$, $\og=\ogs$, $\ob=\obs$  and $\oh=\ohs$.}

Let $\fS_\Is$ denote the symmetric group on $\Is$.
For $\si \in  \fS_\Is$, we endow a new total order $\lesi$ on $\Is$ by defining
\begin{equation} \label{order}
i \lesi j \qquad \mbox{if $\quad \si(i)<_\Is \si(j)$.}
\end{equation}
For any $i, j \in \Is$, we say that $i \leqsi j$ if $i \lesi j$ or $i=j$.
We denote by $\Is^\si$ the set $\Is$ with total order $\lesi$ .

Let $\mb^{\si}$ (resp.,$\ob^{\si}$) denote the Borel subalgebra of $\mg$ (resp., $\og$) with respect to $\lesi$. That is,
$$
\mb^{\si}=\bigoplus_{\substack{ i,j \in \Is, \, i \leqsi j} }  \C E_{i, j}\qquad
\text{and} \qquad \ob^{\si}=\Big( \bigoplus_{\substack{ i,j \in \Is, \, i \leqsi j}} \C \oE_{i, j} \Big) \oplus \C K.
$$
The Borel subalgebras $\mb^{\si}$ and $\mb$ (resp., $\ob^{\si}$ and $\ob$) share the same Cartan subalgebra $\mh$ (resp., $\oh$).

For any $\mg$(resp., $\og$)-module $M$, the \emph{$\si$-singular space} of $M$ (with respect to $\mb^\si$ (resp., $\ob^\si$)) is defined to be
$$
M^{\si\mm\sing}=\setc*{\! v \in M }{E_{i, j} v=0 \,\,  \mbox{(resp., $\ov E_{i,j} v=0$) \, for all\, $i, j \in \Is $ with $i\lesi j$}\!}.
$$
Any nonzero vector in $M^{\si\mm\sing}$ is called a $\si$-singular vector.
For any weight $\mu$ of $M$, we let $M^{\si\mm\sing}_\mu=M_\mu \cap M^{\si\mm\sing}$.
If $M^{\si\mm\sing}_\mu \not=0$, then $\mu$ is called a $\si$-singular weight of $M$, and $M^{\si\mm\sing}_\mu$ is called the $\si$-singular weight space of $M$ of ($\si$-singular) weight $\mu$.
If $\si=1$, we set $M^\sing=M^{\si\mm\sing}$ and $M^\sing_\mu=M^{\si\mm\sing}_\mu$ and replace any ``$\si$-singular" with ``singular".
For any $\og$-module $M$,
$$
(M^\iota)^{\si\mm\sing}=M^{\si\mm\sing}
$$
as vector spaces.

For $\xi \in \mh^*$ (resp., $\oh^*$), let $L^\si(\mg, \xi)$ (resp., $L^\si(\og, \xi)$) be the irreducible highest weight $\mg$(resp., $\og$)-module with highest weight $\xi$ with respect to $\mb^{\si}$ (resp., $\ob^\si$).
It is evident that $L^\si(\mg, \xi)=L(\mg, \xi)$ (resp., $L^\si(\og, \xi)=L(\og, \xi)$) if $\si=1$.

Let $\ov \fS_\Is$ denote the set of all nonidentity permutations $\si \in \fS_\Is$ such that $\Ipq$ and $\Imn$ are both invariant under $\si$ and such that the usual ordering of each of the subsets $\{ -p, \ldots, -1,  1, \ldots, m\}$ and $\left\{-q+\hf, \ldots, -\hf, \hf, \ldots, n-\hf \right\}$ is preserved in the totally ordered set $\Is^\si$.

The following proposition follows from the proof of \cite[Lemma 3.2]{CL10} (cf. \cite[Proposition 6.4]{CLW15}).

\begin{prop} \label{Lth}
Let $M\in \Os$ and $\si \in \ov \fS_\Is$. For any singular weight $\ov\xi$ of $M$,  there is an element $A \in U(\og)$, which is a product of elements in $\setc*{ \oE_{i,j} }{ i,j \in \Is, \, ij>0, \, i+j \notin \Z }$, such that $Av$ is a $\ob^\si$-singular vector of weight $\ov\xi^\si$ for all $v\in M_{\ov\xi}^{\sing}$. In particular, $L^\si(\og, \ov\xi^\si)=L(\og, \ov\xi)$.
Moreover, there is an isomorphism
$$
\varphi: M_{\ov\xi}^{\sing} \longrightarrow M_{\ov\xi^\si}^{\si\mm\sing},
$$
defined by $\varphi(v)=Av$ for $v \in M_{\ov\xi}^{\sing}$.
\end{prop}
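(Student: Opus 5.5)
The idea is to move from the standard Borel $\ob$ to $\ob^\si$ by a chain of odd reflections, tracking singular vectors at each step. Since $\si \in \ov\fS_\Is$ preserves $\Ipq$ and $\Imn$ as sets and preserves the relative order of the even indices $\{-p,\ldots,-1,1,\ldots,m\}$ and of the odd indices $\{-q+\hf,\ldots,-\hf,\hf,\ldots,n-\hf\}$, the total order $\lesi$ arises from $\lei$ by successively transposing adjacent pairs $i,j$ of opposite parity lying in the same half ($ij>0$). Hence $\ob^\si$ is reached from $\ob$ by a finite sequence $\ob=\ob^{\si_0},\ob^{\si_1},\ldots,\ob^{\si_N}=\ob^\si$. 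Each step is an odd reflection with respect to a simple root $\al=\ep_i-\ep_j$, where $i,j$ are adjacent in $\Is^{\si_{k-1}}$, $i+j\notin\Z$, and $ij>0$. I plan to prove the statement for a single odd reflection and then compose.

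For one odd reflection along a simple odd root $\al$ with root vector $\oE_{i,j}$, I would use the standard lemma. Let $v$ be a $\ob^{\si_{k-1}}$-singular vector of weight $\ga$. If $\ga(\oE_{i,i})+(-1)^{2j}\ga(\oE_{j,j})\neq 0$, i.e.\ $(\ga,\al)\neq0$, then $\oE_{j,i}v$ is $\ob^{\si_k}$-singular of weight $\ga-\al$, and $\oE_{i,j}\oE_{j,i}v=(\ga,\al)v$. If instead $(\ga,\al)=0$, then $v$ itself is already $\ob^{\si_k}$-singular, and we apply the factor $1$. The verification uses only that $\oE_{j,i}^2=0$ and that the other positive root vectors of $\ob^{\si_k}$ lie in $\ob^{\si_{k-1}}$ or are obtained as brackets with $\oE_{j,i}$. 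The key observation is that the choice depends only on the weight $\ga$, not on the particular vector $v$. Therefore a single element $A_k\in\{1,\oE_{j,i}\}$ works uniformly on the whole singular weight space, and it is injective there: in the nontrivial case $\oE_{i,j}A_k$ acts as the nonzero scalar $(\ga,\al)$. Composing gives $A=A_N\cdots A_1$, a product of $\oE_{a,b}$ with $ab>0$ and $a+b\notin\Z$. This produces an injective map $M^{\sing}_{\ov\xi}\to M^{\si\mm\sing}_{\ov\xi^\si}$, where $\ov\xi^\si$ is the resulting weight. The weight $\ov\xi^\si$ is defined by this rule, which presumably matches the paper's definition.

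The hardest part is bijectivity, and there I would run the same argument in reverse. Odd reflections are invertible: from $\ob^{\si_k}$ back to $\ob^{\si_{k-1}}$ one reflects along $-\al$. On a $\ob^{\si_k}$-singular space of weight $\ga-\al$ with $(\ga,\al)\ne0$ we have $(\ga-\al,\al)=(\ga,\al)\neq 0$, since $(\al,\al)=0$. So $\oE_{i,j}$ maps it injectively back to weight $\ga$, and the round trip composite is the scalar $(\ga,\al)$. In the case $(\ga,\al)=0$, the reverse step also acts by $1$ on a weight space fixed by both. Applying this at each step shows that the dimensions of the singular weight spaces agree, so $\varphi$ is an isomorphism.

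The delicate points I expect are two. First, confirming that the case distinction is consistent in both directions; this needs $M\in\Os$ to ensure that the weights are in $\Xis$ and that the weight spaces are finite-dimensional. Second, confirming that every intermediate Borel is indeed obtained through simple odd roots with $ij>0$. Finally, taking $M=L(\og,\ov\xi)$, the highest weight vector maps to a nonzero $\ob^\si$-singular vector of weight $\ov\xi^\si$, which generates the irreducible module. This gives $L^\si(\og,\ov\xi^\si)=L(\og,\ov\xi)$.
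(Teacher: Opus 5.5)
Your strategy is the one the paper uses. The paper gives no argument of its own and defers to the proof of \cite[Lemma 3.2]{CL10}, which builds $A$ from a chain of odd reflections inside $\Ipq$ and $\Imn$ exactly as you propose. The gap is the degenerate step. From $(\ga,\al)=0$, the odd-reflection computation only shows that $\oE_{j,i}v$ is again $\ob^{\si_{k-1}}$-singular, of weight $\ga-\al$. It does not show $\oE_{j,i}v=0$, and that is precisely what ``$v$ is already $\ob^{\si_k}$-singular'' means. For a general module this is false: in the $\ob^{\si_{k-1}}$-Verma module of highest weight $\ga$ one has $\oE_{j,i}v\neq 0$. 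The same issue arises in your surjectivity argument, where in this case you need $\oE_{i,j}w=0$ for every $\ob^{\si_k}$-singular $w$ of weight $\ga$. You flag $M\in\Os$ as the relevant hypothesis, but this is exactly where it must be used, and you have not supplied the argument.

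The missing step is a short weight check via \lemref{weight-Xi}.
\begin{itemize}
\item First fix the scalar. For $ij>0$ and $i+j\notin\Z$, the cocycle $\uptau$ vanishes on $(E_{i,j},E_{j,i})$. Hence $[\oE_{i,j},\oE_{j,i}]=\oE_{i,i}+\oE_{j,j}$ and $(\ga,\al)=\ga(\oE_{i,i})+\ga(\oE_{j,j})$, whichever index is odd. Your factor $(-1)^{2j}$ gives the wrong sign when $j$ is the odd index, and the sign matters here.
\item Because $i$ and $j$ lie in the same half, $\ga(\oE_{i,i})$ and $\ga(\oE_{j,j})$ both lie in $\Zp$ (if $i,j\in\Imn$) or both lie in $\Zn$ (if $i,j\in\Ipq$). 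So $(\ga,\al)=0$ forces $\ga(\oE_{i,i})=\ga(\oE_{j,j})=0$.
\item Then each of $\ga-\al$ and $\ga+\al$ has one of its $\ep_i$-, $\ep_j$-coordinates equal to $-1$ in the first case and $+1$ in the second. So neither lies in $\Xis$, and hence $\oE_{j,i}v=0$ and $\oE_{i,j}w=0$.
\end{itemize}
With this inserted, the degenerate case works in both directions. The rest of your argument then goes through: injectivity because $\oE_{i,j}\oE_{j,i}$ acts by $(\ga,\al)$ on the singular space, the reverse map, and the consequence for $L^\si(\og,\ov\xi^\si)$.
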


\begin{rem}
The element $A \in U(\og)$ in the above proposition is determined by a sequence of odd reflections (see \cite[Section 3.1]{CL10} or \cite[Section 6.3]{CW}), which depends on $\ov\xi$ and $\si$.
\end{rem}

The permutations in $\ov\fS_\Is$ considered in the following example will be useful in \secref{pf}.

\begin{ex}\label{ex}

For $q', m' \in \Zp$ with $q' \le q$ and $m'  \le m$, let $\sig:=\sig_{(q',m')} \in \ov\fS_\Is$ be the permutation determined by the total order
\begin{eqnarray}\label{si-order}
&&-q+\hf \lesig \ldots \lesig -q'+\frac{3}{2}  \lesig -p \lesig \cdots \lesig -1   \nonumber\\
&&\lesig -q'+\hf \lesig \cdots \lesig -\hf  \lesig 1 \lesig \cdots \lesig m'   \nonumber \\
&&\lesig \hf \lesig \cdots \lesig n-\hf \lesig m'+1\lesig \cdots \lesig m.
\end{eqnarray}
For $\ovla \in \cP^a_\pdn$, let $\ov\xi=\ovlas  \in \oh_\pdn^*$ be defined in \eqref{hlas}.
We define $\ov\xi^{\sig}\in \oh_\pdn^*$ by
\begin{eqnarray}\label{si-weight}
\ov\xi^{\sig}:=& &- \sum_{r=q'+1}^{q} \left\langle (\la^{-})^\prime_{r} -p \right\rangle \ep_{-r+\hf} -\sum_{r=1}^{p} \langle \la^{-}_{r}-q' \rangle \ep_{-r} - \sum_{r=1}^{q'} (\la^{-})^\prime_{r} \ep_{-r+\hf}   \nonumber\\
& & +\sum_{i=1}^{m'}  \la^+_i  \ep_i+\sum_{i=1}^{n} \left\langle (\la^+)'_i -m' \right\rangle \ep_{i-\hf}+\sum_{i=m'+1}^{m} \langle \la^+_i-n \rangle \ep_i +a \La_0.
\end{eqnarray}
By \cite[Theorem 2.55]{CW}, $L^{\sig}(\og, \ov\xi^{\sig})=L(\og,\ov\xi)$.
If $\ovla\in \cP^a_{(p,q'|m',n)}$, then $\ov\xi^{\sig}=\ovla^{(p,q'|m',n)}$, and so
$$
L^{\sig} \big(\og, \ovla^{(p,q'|m',n)} \big)=L \big(\og, \ovlas \big).
$$
\end{ex}

\subsection{Unitarizable modules} \label{uni-mod}

Let us review $*$-superalgebras and their unitarizable modules. We refer the reader to \cite{CCL, ChL24, CLZ, LZ06} for related discussions.

A \emph{$*$-superalgebra} is a pair $(\cA, \omega)$, where $\cA$ is an associative superalgebra, and $\omega: \cA \longrightarrow \cA$ is an even anti-linear anti-involution, called a $*$-structure on $\cA$.
A homomorphism $f: (\cA,\omega) \longrightarrow (\cA^\prime, \omega^\prime)$ of $*$-superalgebras is a homomorphism of superalgebras satisfying $\omega^\prime \circ f = f \circ \omega$.
Let $(\cA,\omega)$ be a $*$-superalgebra, and let $V$ be an $\cA$-module.  A Hermitian form $\langle\cdot|\cdot\rangle$ on $V$ is said to be \emph{contravariant} with respect to $\omega$ if $\langle av_1 | v_2 \rangle=\langle v_1 |\omega(a)v_2 \rangle$ for all $a\in \cA$ and $v_1, v_2 \in V$.
An $\cA$-module equipped with a positive definite contravariant Hermitian form is called a \emph{unitarizable} $\cA$-module.

A \emph{Lie superalgebra with $*$-structure} is a pair $(\cG, \omega)$, where $\cG$ is a Lie superalgebra, and $\omega: \cG \longrightarrow \cG$ is an even anti-linear anti-involution, called a $*$-structure on $\cG$.
Note that $\omega$ is a $*$-structure on $\cG$ if and only if the natural extension of $\omega$ to $U(\cG)$ is a $*$-structure on $U(\cG)$.
A homomorphism $f: (\cG, \omega) \longrightarrow (\cG^\prime, \omega^\prime)$ of Lie superalgebras with $*$-structures is a homomorphism of Lie superalgebras satisfying $\omega^\prime \circ f = f \circ \omega$.
Let $(\cG, \omega)$ be a Lie superalgebra with $*$-structure, and let $V$ be a $\cG$-module. A Hermitian form $\langle\cdot|\cdot\rangle$ on $V$ is said to be \emph{contravariant} with respect to $\omega$ if
$\langle g v_1 |v_2 \rangle=\langle v_1 |\omega(g)v_1 \rangle$ for all $g \in \cG$ and $v_1, v_2 \in V$.
A $\cG$-module equipped with a positive definite contravariant Hermitian form is called a \emph{unitarizable} $\cG$-module.
It is clear that any unitarizable $\cG$-module is a unitarizable $U(\cG)$-module, and vice versa, and that any unitarizable $\cG$-module is completely reducible.

We will focus on unitarizable highest weight $\mg$-modules, which are the main objects of our study.
Before proceeding, we introduce some terminology.

The Lie superalgebra $\mg$ admits a $*$-structure $\omega: \mg \longrightarrow \mg$ (\cite[Section 3.2]{CLZ}), defined by
\begin{equation} \label{star}
 \sum_{i, j \in \Is} c_{ij} E_{i,j} \mapsto  \sum_{i, j \in \Is} (-1)^{[i]+[j]} \bar{c}_{ij} E_{j,i},
\end{equation}
where $\bar{c}_{ij}$ denotes the complex conjugate of $c_{ij} \in \C$, and
$$
[i]:=  \begin{cases}
      \, 0  & \,\,\mbox{if}  \ \  i \in \{ -p, \ldots, -1\},\\
      \, 1  & \,\, \mbox{otherwise}.
     \end{cases}
$$

Fix $d \in \N$. A \emph{generalized partition} $\la:=(\la_1,\ldots,\la_d)$ of depth $d$ is defined to be a sequence of integers in decreasing order, i.e., $\la_1 \ge \ldots \ge \la_d$.
We denote by $\oP^d$ the set of all generalized partitions of depth $d$.
Let
$$
\oP^d_\pdn=\setc*{\la \in \oP^d}{\mbox{$\la_{m+1}\le n$ and $\la_{d-p}\ge -q$}}.
$$
The condition $\la_{d-p}\ge -q$ is considered to be automatically satisfied whenever $d \le p$.

For $\la \in \oP^d$, we define
\begin{equation} \label{part-pm}
\la^+=\left(\langle \la_1 \rangle,\ldots,\langle \la_d \rangle \right)
\qquad \text{and} \qquad
\la^-=\left(\langle -\la_d \rangle,\ldots,\langle -\la_1 \rangle \right).
\end{equation}
Then $\la^+$ and $\la^-$ are partitions.
If $\la \in \oP^d_\pdn$, then $(\la^+)'_{n+1}\leq m$, $\la^{-}_{p+1}\leq q$ and $\ell(\la^+)+\ell(\la^-)\le d$.
Thus, $\ovla:=(\la^+, \la^-; d) \in \cP_\pdn^d$.

For $\la \in \oP^d_\pdn$, we define
$$
\mathbbm{1}_{p|q}=\sum_{r=1}^{p} \ep_{-r}-\sum_{r=1}^{q} \ep_{-r+\hf}
$$
and
{\allowdisplaybreaks
\begin{eqnarray}\label{rlas}
 \rlas=& &-\sum_{r=1}^{p} \langle \la^{-}_{r}-q \rangle \ep_{-r}- \sum_{r=1}^{q} (\la^{-})^\prime_{r} \ep_{-r+\hf} \nonumber  \\
& &  +\sum_{i=1}^m \la^+_i \, \ep_i+\sum_{i=1}^n\langle(\la^+)'_i-m\rangle \,\ep_{i-\hf }-d\mathbbm{1}_{p|q} \,\, \, \in \mh^*_\pdn.
\end{eqnarray}
}
For $d \in \Zp$, let \begin{equation}\label{un-weight}
{\mf X}^d_\pdn=\setc*{ \rlas \in \mh^*_\pdn}{ \la \in \oP^d_\pdn}.
\end{equation}
Note that if $d=0$, then $\la$ is declared to be empty and $\rlas:=0$.
In this case, $L(\mg, \rlas)$ is the one-dimensional trivial $\mg$-module with highest weight $0$.

Define
$$
 {\mf X}_\pdn=\bigcup_{d \in \Zp}{\mf X}^d_\pdn\qquad \text{and} \qquad \oP_\pdn=\bigcup_{d \in \Zp}\oP^d_\pdn.
$$
We have the following proposition (cf. \cite{CLZ}).

\begin{prop} \label{CLZ}
For any  $\la  \in \oP_\pdn$, $L(\mg, \rlas)$ is unitarizable with respect to $\omega$. Equivalently, $L(\mg, \xi)$ is unitarizable with respect to $\omega$ for all $\xi \in {\mf X}_\pdn$.
\end{prop}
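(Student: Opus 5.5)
The plan is to realize each $L(\mg,\rlas)$ as the $\iota$-pullback of an irreducible $\og$-module in the parabolic category $\Os$. A positive definite contravariant form on that $\og$-module then gives one on $L(\mg,\rlas)$, because $\iota$ only changes $E_{i,j}$ by a central scalar on the diagonal. The equivalence of the two formulations is immediate: ${\mf X}_\pdn$ is by definition the set of all $\rlas$ with $\la\in\oP_\pdn$. The case $d=0$ is the trivial module and is trivially unitarizable, so from now on fix $d\in\N$ and $\la\in\oP^d_\pdn$, and set $\ovla=(\la^+,\la^-;d)\in\cP^d_\pdn$.

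First I compare weights. By \eqref{iso-e}, $\iota(E_{i,i})=\oEii+\mathrm{Str}(\mathfrak J E_{i,i})K$. For $i\in\Ipq$ we have $\mathrm{Str}(\mathfrak J E_{i,i})=-(-1)^{2i}$, and for $i\in\Imn$ it is $0$. Hence on a vector of $\og$-weight $\ovlas$ (where $K$ acts by $d$), the element $E_{i,i}$ acts by $\ovlas(\oEii)-(-1)^{2i}d$ for $i\in\Ipq$, and by $\ovlas(\oEii)$ otherwise. Comparing \eqref{hlas} with \eqref{rlas}, this is exactly $\rlas(E_{i,i})$, since the shift is $-d\mathbbm{1}_{p|q}$. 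Also $\iota$ maps $\mb$ onto the nilradical part of $\ob$. Therefore
$$L(\og,\ovlas)^\iota\cong L(\mg,\rlas),$$
and this module lies in $\Os^d$ by \lemref{verma-Os}.

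It remains to show that $L(\og,\ovlas)$ carries a positive definite form that is contravariant for the $*$-structure transported via $\iota$ (with $K^*=K$). This is the main obstacle. My approach is to use the oscillator (Howe duality) realization. Consider $\mathbb{C}^{p|q}$-dual and $\mathbb{C}^{m|n}$ bosonic/fermionic Fock spaces: $d$ copies of the superalgebra generated by $\C^{m|n}\otimes\C^d$ and $(\C^{p|q})^*\otimes\C^d$ oscillators, acting on the Fock space $\cF$. The bilinears in these oscillators give a representation of $\og$ at level $K=d$, under which $\gl_d$ acts commuting. With the standard positive definite Fock inner product, the oscillator adjoints produce precisely the signs $(-1)^{[i]+[j]}$ of \eqref{star}: the $\Ipq$ bosonic block is of negative type, and the odd indices carry the sign convention of \cite{CLZ}. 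So $\cF$ is unitarizable, and hence completely reducible.

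The $(\og,\gl_d)$-Howe duality, i.e. the super duality decomposition of the Fock space as in \cite{CLZ} and \cite{CW}, gives
$$\cF\cong\bigoplus_{\la\in\oP^d_\pdn} L(\og,\ovlas)\otimes L(\gl_d,\la).$$
Each summand $L(\og,\ovlas)$ is then a submodule of a unitarizable module and inherits the positive definite form. The delicate points are matching the highest weights with \eqref{hlas}, which can be checked using the truncation compatibility of \propref{trun-sep}, and checking the sign bookkeeping in $\omega$. If the sign check fails directly, I would instead verify positivity on the highest weight vectors of this explicit decomposition and appeal to \cite{CLZ}.
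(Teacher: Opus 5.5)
Your proposal is correct and takes essentially the paper's route: the paper proves this only by citing \cite{CLZ}, and there the modules are realized as you describe, inside the oscillator Fock space via $(\gl(p+m|q+n),\gl_d)$-Howe duality, with the standard Fock inner product contravariant for $\omega$ (your passage through $\og$ and $\iota$ is just the normal-ordered version of that realization). The sign bookkeeping does work out with the ordinary, non-super Hilbert-space adjoint once the mixed-block operators are normalized as the commutation relations force, so you do not need your fallback of checking positivity only on highest weight vectors, which would not suffice on its own anyway.
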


The $*$-structure $\omega$ on $\mg$ extends to a $*$-structure $\omega: \mg\oplus \C K \longrightarrow \mg\oplus \C K$ by sending $K$ to itself.
On the other hand, the Lie superalgebra $\og$ possesses a $*$-structure $\ov\omega: \og \longrightarrow \og$ defined by $\ov\omega (\ov E_{i,j})= (\omega (E_{i,j}), 0)$ for all $i, j \in \Is$ and $\ov\omega(K)=K$.
Also,
\begin{equation}\label{om-io}
 (\ov\omega \circ \iota) (A)=(\iota \circ \omega) (A) \qquad \mbox{for all $A \in \mg\oplus \C K$}.
 \end{equation}
 Therefore, $\iota: (\mg\oplus \C K, \omega) \longrightarrow (\og, \ov\omega)$ is an isomorphism of Lie superalgebras with $*$-structures.

Fix $d \in \N$.
Let
$$
\ov{\mf X}^d_\pdn=\setc*{ \ovlas \in \oh_\pdn^*}{ \la \in \oP^d_\pdn}.
$$
For any $\xi \in {\mf X}^d_\pdn$, $L(\mg, \xi)$ is unitarizable and extends to a unitarizable highest weight $(\mg\oplus \C K)$-module by setting $K v= d v$ for all $v \in  L(\mg, \xi)$.
Also, $L(\og, \ov\xi)$ coincides with the $\og$-module obtained by pulling back the action on the extension of $L(\mg, \xi)$ via the isomorphism $\iota^{-1}$ and hence is unitarizable by \eqnref{om-io}.
Clearly, $L(\mg, \xi)=L(\og, \ov\xi)^\iota$.
Note that if $\xi=\rlas$ with $\la  \in \oP^d_\pdn$, then $\ov\xi=\ovlas \in \ov{\mf X}^d_\pdn$ by \eqnref{hlas}, \eqnref{part-pm}, and \eqnref{rlas}.
We summarize the discussion in the following proposition.

\begin{prop} \label{uni-K}
For any $\xi \in {\mf X}^d_\pdn$, there is $\ov\xi \in \ov{\mf X}^d_\pdn$ such that $L(\mg, \xi)=L(\og, \ov\xi)^\iota$.
Moreover, if $\xi=\rlas$ for some $\la  \in \oP^d_\pdn$, then $\ov\xi=\ovlas$.
\end{prop}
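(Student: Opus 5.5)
The statement to prove is \propref{uni-K}, and the plan is to make explicit the discussion preceding it. Given $\xi\in{\mf X}^d_\pdn$, write $\xi=\rlas$ for some $\la\in\oP^d_\pdn$. Set $\ov\xi:=\ovlas$, where $\ovla=(\la^+,\la^-;d)\in\cP^d_\pdn$; then $\ov\xi\in\ov{\mf X}^d_\pdn$ by definition. The whole proof reduces to checking that pulling back $L(\og,\ov\xi)$ along $\iota$ gives an irreducible highest weight $\mg$-module of highest weight $\xi$ with respect to $\mb$.

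\emph{Step 1: the pulled-back module is irreducible highest weight.} Since $\iota$ is a Lie superalgebra isomorphism $\mg\oplus\C K\to\og$, the pullback $L(\og,\ov\xi)^\iota$ is irreducible as a $(\mg\oplus\C K)$-module. Because $K$ is central and $\iota(K)=K$, the element $K$ acts by the scalar $\ov\xi(K)=d$. Hence the module is also irreducible as a $\mg$-module. Moreover, $\iota(E_{i,j})=\oE_{i,j}$ for $i\neq j$, since $\mathrm{Str}(\mathfrak J E_{i,j})=0$ whenever $i\ne j$. So the highest weight vector $v$ of $L(\og,\ov\xi)$ is annihilated by $E_{i,j}$ for all $i\lei j$, and $L(\og,\ov\xi)^\iota$ is a highest weight module with respect to $\mb$.

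\emph{Step 2: compute the highest weight.} For $i\in\Is$ we have $\iota(E_{i,i})=\oE_{i,i}+\mathrm{Str}(\mathfrak J E_{i,i})K$. This gives $\mathrm{Str}(\mathfrak J E_{i,i})=0$ for $i\in\Imn$, and for $i\in\Ipq$ it gives $\mathrm{Str}(-E_{i,i})=-(-1)^{2i}$, which equals $-1$ for $i=-r$ and $+1$ for $i=-r+\hf$. Therefore
\[
E_{i,i}v=\bigl(\ov\xi(\oE_{i,i})-d\,\mathbbm{1}_{p|q}(E_{i,i})\bigr)v .
\]
Comparing \eqref{hlas} with \eqref{rlas}, the coefficients of $\ep_i$ agree except for the correction $-d\mathbbm 1_{p|q}$. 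This shows that the highest weight is exactly $\rlas=\xi$, so $L(\mg,\xi)=L(\og,\ov\xi)^\iota$, which also yields the ``moreover'' clause.

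\emph{Obstacle.} The only subtle point is the sign bookkeeping in $\mathrm{Str}(\mathfrak J E_{i,i})$, namely confirming that it produces $-d\mathbbm 1_{p|q}$ rather than $+d\mathbbm 1_{p|q}$. A secondary point is well-definedness, since $\xi$ might a priori arise from two different $\la$. This is handled by defining $\ov\xi$ as $\xi+d\mathbbm1_{p|q}+d\La_0$ under the obvious identification, which depends only on $\xi$ and $d$. Unitarizability is not needed for the statement, although it follows from \eqref{om-io} and \propref{CLZ}.
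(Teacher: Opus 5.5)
Your proposal is correct and is essentially the paper's own argument. The paper extends $L(\mg,\xi)$ to $\mg\oplus\C K$ by letting $K$ act as $d$ and pulls back along $\iota^{-1}$, whereas you pull $L(\og,\ov\xi)$ back along $\iota$; this is the same identification read in the opposite direction, and your check that $\mathrm{Str}(\mathfrak{J}E_{i,i})=-\mathbbm{1}_{p|q}(E_{i,i})$ spells out the comparison of \eqref{hlas} with \eqref{rlas} that the paper leaves implicit.
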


\section{Gaudin algebras} \label{Gaudin-alg}

The primary purpose of this section is to introduce the Gaudin algebra for the general linear Lie (super)algebra.
First, we introduce Berezinians and pseudo-differential operators.

\subsection{Berezinians}

Let $\cA$ be an associative unital superalgebra.
The parity of a homogeneous element $a \in \cA$ is denoted by $|a|$, which lies in $\Z_2$.

In this subsection, we let $r \in \N$, and let $\Ks$ be a set of size $r$ with total order $<_\Ks$.
Let $A=\big[a_{i,j}\big]_{i,j \in \Ks}$ be an $r \times r$ matrix. For any nonempty totally ordered subset $\Jp$ of $\Ks$, the matrix $A_{\Jp}:=\big[a_{i,j}\big]_{i,j \in \Jp}$ is called a \emph{standard submatrix} of $A$.

Assume that $A=\big[a_{i,j}\big]_{i,j \in \Ks}$ has a two-sided inverse $A^{-1}:=\big[\wt{a}_{i,j}\big]_{i,j \in \Ks}$.
For any $i, j \in \Ks$, the \emph{$(i,j)$th quasideterminant} of $A$ is defined to be $|A|_{i,j}:=\wt{a}_{j,i}^{-1}$ if $\wt{a}_{j,i}$ has an inverse in $\cA$.
Write $\Ks=\{k_1<_\Ks \cdots <_\Ks k_r\}$.
Following the notation of \cite{GGRW}, we write
$$
|A|_{k_i, k_j}=\begin{vmatrix}
 a_{k_1,k_1} 	& \ldots & a_{k_1,k_j} &\ldots & a_{k_1,k_r} \\
 \ldots	& \ldots & \ldots  &\ldots & \ldots	\\
 a_{k_i,k_1} & \ldots & \fbox{$ a_{k_i,k_j}$} &\ldots & a_{k_i,k_r}\\
 \ldots	& \ldots & \ldots  &\ldots & \ldots	\\
  a_{k_r,k_1}&\ldots & a_{k_r,k_j} &\ldots & a_{k_r,k_r}
\end{vmatrix}.
$$
For $i=1,\ldots,r$, we define
$$
d_{k_i}(A)=\begin{vmatrix}
	 a_{k_1,k_1} 	& \ldots & a_{k_1,k_i}\\
	 \ldots	& \ldots & \ldots \\
	 a_{k_i,k_1} & \ldots & \fbox{$a_{k_i,k_i}$}
\end{vmatrix},\
$$
called the \emph{principal quasiminors} of $A$.
We say that $A$ is \emph{sufficiently invertible} if every principal quasiminor of $A$ is well defined, and that $A$ is \emph{amply invertible} if every standard submatrix of $A$ is sufficiently invertible.

Let $m, n \in \Zp$ such that $m+n=r$.
Let $\cS_\mn$ be the set of all sequences $\s=(s_i)_{i \in \Ks}$ of 0's and 1's such that exactly $m$ of the $s_i$'s are 0 and the others are 1.
Every sequence in $\cS_\mn$ can be written in the form
$(0^{m_1}, 1^{n_1}, \ldots, 0^{m_k}, 1^{n_k}),$
where the sequence begins with $m_1$ copies of $0$'s, followed by $n_1$ copies of $1$'s, and so on.

Fix $\s =(s_i)_{i \in \Ks} \in \cS_\mn$. 
We say that $A=[a_{i,j}]_{i,j \in \Ks}$ is of type $\s$ if $a_{i,j}$ is a homogeneous element of parity $|a_{i,j}|=\bs_i+\bs_j$ for any $i,j \in \Ks$.
Suppose $A$ is a sufficiently invertible matrix of type $\s$ over $\cA$.
Following \cite{HM}, the \emph{Berezinian of type $\s$} of $A$ is defined to be
$$
\Bers (A)=d_{k_1} (A)^{\hs_{k_1} }\ldots d_{k_r}(A)^{\hs_{k_r}},
$$
where $\hs_i:=(-1)^{s_i}$ for $i \in \Ks$.
The original formulation of Berezinians, in the case where $\cA$ is supercommutative, was given in \cite{Ber} (see also \cite{MR}).

A matrix $A=[a_{i,j}]_{i,j \in \Ks}$ over $\cA$ is called a \emph{Manin matrix} of type $\s$ if $A$ is a matrix of type $\s$ satisfying the following relations
$$
[a_{i,j}, a_{k,l}]=(-1)^{s_i s_j+s_i s_k+s_j s_k}[a_{k,j}, a_{i, l}]  \qquad  \text{for all}  \quad i,j,k,l \in \Ks,
$$
where $[a, b]:=ab-(-1)^{|a||b|}ba$ for any homogeneous elements $a, b \in \cA$.

Let $\fS_{\Ks}$ be the symmetric group on $\Ks$.
Let $\si \in  \fS_\Ks$.
Analogous to \eqref{order}, we endow a total order $\jlesi$ on $\Ks$ by defining
$$
i \jlesi j \qquad \mbox{if $\quad \si(i)<_\Ks \si(j)$.}
$$
We write $\Ks^\si$ for the set $\Ks$ with total order $\jlesi$.
For any $\s =(s_i)_{i \in \Ks} \in \cS_\mn$ and any matrix $A=[a_{i,j}]_{i,j \in \Ks}$ over $\cA$, we define
$$
\s^\si=(s_i)_{i \in \Ks^\si} \qquad \text{and} \qquad A^\si=[a_{i,j}]_{i,j \in \Ks^\si}.
$$
Evidently, $\s^\si=\left(s_{\si^{-1}(i)} \right)_{i \in \Ks}$ and $A^\si=\big[a_{\si^{-1}(i),\si^{-1}(j)} \big]_{i,j \in \Ks}$.

We recall some basic facts about Manin matrices.

\begin{prop}[{cf. \cite[Section 3]{HM}}] \label{basics}

Let $A=[a_{i,j}]_{i,j \in \Ks}$ be a Manin matrix of type $\s=(s_i)_{i \in \Ks}$ over $\cA$. Then

\begin{enumerate}[\normalfont(i)]

\item If $\Jp$ is a nonempty totally ordered subset of $\Ks$, then $A_{\Jp}$ is a Manin matrix of type $(s_i)_{i \in \Jp}$.

\item For any $\si \in \fS_{\Ks}$, $A^\si$ is a Manin matrix of type $\s^\si$.

\end{enumerate}
\end{prop}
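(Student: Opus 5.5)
Both parts are checked directly from the defining relation of a Manin matrix of type $\s$,
$$
[a_{i,j}, a_{k,l}]=(-1)^{s_i s_j+s_i s_k+s_j s_k}[a_{k,j}, a_{i, l}] \qquad \text{for all } i,j,k,l \in \Ks .
$$
No induction or structural result is needed, because the relation is indexed by arbitrary quadruples of indices and never refers to the total order on $\Ks$.

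For (i), we first check the parity condition. For $i,j\in\Jp$, the entry $a_{i,j}$ of $A_{\Jp}$ has parity $\bs_i+\bs_j$. This is exactly the parity required for a matrix of type $(s_i)_{i\in\Jp}$, so $A_{\Jp}$ has the correct type. Next we check the Manin relation. Take any $i,j,k,l\in\Jp$; these indices also lie in $\Ks$. The relation for $A$ at the quadruple $(i,j,k,l)$ is therefore literally the required relation for $A_{\Jp}$. The sign on the right-hand side depends only on $s_i,s_j,s_k$, which are the same numbers in both settings.

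For (ii), the matrix $A^\si$ is the same family $[a_{i,j}]$ indexed by the reordered set $\Ks^\si$. Its type $\s^\si$ assigns the value $s_i$ to the index $i$, exactly as before. Hence the parity condition $|a_{i,j}|=\bs_i+\bs_j$ is unchanged. The Manin relations for $A^\si$ range over all $i,j,k,l\in\Ks^\si=\Ks$, with signs built from the same $s_i,s_j,s_k$, so they coincide with those of $A$.

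If one instead works with the re-indexed form $A^\si=\big[a_{\si^{-1}(i),\si^{-1}(j)}\big]_{i,j\in\Ks}$ of type $\big(s_{\si^{-1}(i)}\big)_{i\in\Ks}$, the argument is the same after a substitution. One substitutes $i\mapsto\si^{-1}(i)$, and likewise for $j,k,l$, into the relation for $A$. This yields exactly the relation for the re-indexed matrix, with signs now built from $s_{\si^{-1}(i)}, s_{\si^{-1}(j)}, s_{\si^{-1}(k)}$.

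There is no genuine obstacle here. The only point needing care is bookkeeping: the sign $(-1)^{s_is_j+s_is_k+s_js_k}$ is attached to the indices themselves and not to their positions in the ordering. This is precisely why reordering and restricting both preserve the Manin property. In contrast, properties such as sufficient invertibility and the Berezinian itself do depend on the order.
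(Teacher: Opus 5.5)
Your proof is correct. The type condition and the Manin relations are indexed by the set $\Ks$ and never use its order, so restricting to $\Jp$ or reordering by $\si$ only selects or relabels the relations of $A$. This is the routine verification behind the paper's citation of \cite[Section 3]{HM}; the paper gives no separate proof.
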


\begin{prop} [{\cite[Proposition 3.6]{HM}}] \label{HM}

Let $A=[a_{i,j}]_{i,j \in \Ks}$ be an amply invertible Manin matrix of type $\s$ over $\cA$.
Then
$$
 \hspace{1cm}  {\rm Ber}^{\s^\si} (A^\si)=\Bers (A)  \qquad \mbox{for any $\si \in \fS_{\Ks}$.}
$$
\end{prop}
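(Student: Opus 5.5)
Since $\fS_{\Ks}$ is generated by the adjacent transpositions, and since $A^{\si\tau}$ is obtained from $A^\si$ by reordering $\Ks$ (with $\s^{\si\tau}$ the correspondingly reordered type), I reduce to the case where $\si$ swaps two neighbouring indices $k_i, k_{i+1}$. By \propref{basics}(ii), $A^\si$ is again a Manin matrix, of type $\s^\si$, so the reduction can be iterated. It is also still amply invertible, because its standard submatrices are reorderings of standard submatrices of $A$; I will check below that reordering preserves sufficient invertibility. The induction then only needs the adjacent case.

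For an adjacent swap, the principal quasiminors $d_{k_j}$ with $j<i$ are unchanged, since they only involve $A_{\{k_1,\dots,k_j\}}$. For $j>i+1$ they are also unchanged. Indeed, a quasideterminant $|B|_{k,k}=\wt b_{k,k}^{-1}$ depends only on the inverse matrix, and reordering rows and columns simultaneously by the same permutation merely reorders the inverse. The same observation shows that quasideterminants of reordered matrices exist whenever those of the original do, which gives the ample invertibility claimed above.

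So the claim reduces to a $2\times 2$ statement. Let $B$ be the submatrix on $\{k_1,\dots,k_{i+1}\}$. By the heredity (quotient) property of quasideterminants, $d_{k_i}(A)$ and $d_{k_{i+1}}(A)$ equal the principal quasiminors of the $2\times2$ matrix $C$ obtained by taking quasideterminants of $B$ with respect to the block $\{k_1,\dots,k_{i-1}\}$. I must show
$$
d_1(C)^{\hs_{k_i}}d_2(C)^{\hs_{k_{i+1}}} = d_1(C^\tau)^{\hs_{k_{i+1}}}d_2(C^\tau)^{\hs_{k_i}}.
$$
This uses the fact, from \cite{HM}, that $C$ is itself a Manin matrix of the induced type; I would cite it or prove it by a quasideterminant computation.

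The core is therefore the $2\times2$ Manin case $C=\begin{pmatrix} a&b\\ c&d\end{pmatrix}$. Here the identity reads
$$
a^{\hs_1}(d-ca^{-1}b)^{\hs_2} = d^{\hs_2}(a-bd^{-1}c)^{\hs_1},
$$
and I split into cases by the parities $(s_1,s_2)$.

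Case $s_1=s_2=0$. The Manin relations give $[a,d]=[c,b]$ and $[a,c]=0=[b,d]$. From these one checks that $ad-cb$ equals $da-bc$ (the column determinant is symmetric), which yields the identity.

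Case $s_1=0$, $s_2=1$, where $b$ and $c$ are odd. One has to verify $a(d-ca^{-1}b)^{-1}=d^{-1}(a-bd^{-1}c)$, or equivalently $da=(a-bd^{-1}c)a^{-1}(d-ca^{-1}b)a$, using the super Manin relations.

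Case $s_1=s_2=1$. This is analogous to the first case, with inverse powers.

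This $2\times2$ verification with noncommuting entries, together with justifying the hereditary Manin property of $C$, is where I expect the main obstacle. My first approach is to expand both sides and reduce them with the relations $[a,b]=\pm[a,b]$-type identities, for example $[a_{11},a_{12}]=0$ and $[a_{12},a_{22}]=0$ up to sign; failing that, I would cite \cite[Section 3]{HM} for this base case.
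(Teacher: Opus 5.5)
The paper gives no argument of its own here; the statement is quoted from \cite[Proposition 3.6]{HM}. Your reduction (adjacent transpositions, heredity down to a $2\times 2$ Schur complement $C$, then a $2\times 2$ identity) is a natural self-contained route, and it does go through once a few points are repaired.

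The step you treat as an aside is in fact the real content. You need that $C$, the Schur complement of $A_{\{k_1,\dots,k_{i-1}\}}$ in $A_{\{k_1,\dots,k_{i+1}\}}$, is Manin of type $(s_{k_i},s_{k_{i+1}})$. Since $C^{-1}$ is the corresponding corner block of $(A_{\{k_1,\dots,k_{i+1}\}})^{-1}$, this follows from \propref{basics}(i) together with two applications of one fact: the inverse of an invertible Manin matrix of type $\s$ is again Manin of type $\s$. This is the super analogue of the classical even statement, and it must be cited or proved.

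Several details are wrong as written.

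\emph{Invertibility under reordering.} Reordering does not preserve sufficient invertibility: over $\C$, $\begin{pmatrix}1&1\\1&0\end{pmatrix}$ is sufficiently invertible but its reordering is not. Nor does the remark that $|B|_{k,k}$ depends only on $B^{-1}$ settle the point, because the principal quasiminors of $A^\si$ are $|A_J|_{k,k}$ with $k$ the $\si$-last element of $J$. What is true is that ample invertibility is order-free. It is equivalent to invertibility of every $A_J$, since then $(A_J^{-1})_{k,k}$ is the inverse of the Schur complement of $A_{J\setminus\{k\}}$ in $A_J$.

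\emph{The $2\times 2$ relations.} With $C=\begin{pmatrix}a&b\\c&d\end{pmatrix}$, the relations depend on the type:
\begin{itemize}
\item Type $(0,0)$: the columns commute and $[a,d]=[c,b]$; your verification here is right.
\item Type $(1,1)$: the \emph{rows} commute and $[a,d]=[b,c]$. The identity becomes $(d-ca^{-1}b)a=(a-bd^{-1}c)d$, i.e.\ $da-cb=ad-bc$.
\item Type $(0,1)$: you get $ac=ca$, $cd=dc$, $c^2=0$ (take $i=k=2$, $j=l=1$), and $ad-da=bc+cb$.
\end{itemize}

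\emph{The $(0,1)$ computation.} Your ``equivalently'' is incorrect. The identity $a(d-ca^{-1}b)^{-1}=d^{-1}(a-bd^{-1}c)$ is equivalent to $da=(a-bd^{-1}c)(d-ca^{-1}b)$. Using $ac=ca$ and $cd=dc$, the right-hand side equals $ad-cb-bc+bd^{-1}c^2a^{-1}b$. This equals $da$ precisely because $c^2=0$ and $ad-da=bc+cb$.

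\emph{The missing $(1,0)$ case.} It follows from the $(0,1)$ case applied to $\begin{pmatrix}d&c\\b&a\end{pmatrix}$, which is Manin of type $(0,1)$ by \propref{basics}(ii).
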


Let $A=[a_{i,j}]_{i,j \in \Ks}$ be an $r \times r$ matrix over $\cA$. The \emph{column determinant} of $A$ is defined to be
$$
\cdet \,  A=\sum_{\si \in \fS_{\Ks}} (-1)^{l(\si)} \,  a_{\si(k_1),k_1}\ldots a_{\si(k_r),k_r},
$$
where $l(\si)$ denotes the length of $\si$.

\begin{prop}[{\cite[Lemma 8]{CFR}}] \label{CFR}
Let $\s_0=(0^r)$. For any sufficiently invertible Manin matrix $A$ of type $\s_0$ over $\cA$, we have $\Ber^{\s_0} (A)=\cdet (A)$.
\end{prop}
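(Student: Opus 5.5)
The plan is to argue by induction on $r$. For $r=1$ both sides equal $a_{k_1,k_1}$, so there is nothing to prove. Write $\Ks'=\{k_1<_\Ks\cdots<_\Ks k_{r-1}\}$. Two observations make the induction work:
\begin{itemize}
\item $A_{\Ks'}$ is again a Manin matrix of type $(0^{r-1})$ by \propref{basics}(i).
\item The principal quasiminors $d_{k_i}(A)$ for $i<r$ coincide with $d_{k_i}(A_{\Ks'})$, since they only involve the upper-left corners of $A$; in particular $A_{\Ks'}$ is sufficiently invertible.
\end{itemize}
Hence $\Ber^{(0^r)}(A)=\Ber^{(0^{r-1})}(A_{\Ks'})\, d_{k_r}(A)$. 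By the induction hypothesis this equals $\cdet(A_{\Ks'})\, d_{k_r}(A)$, and this element is invertible because it is a product of invertible quasiminors. It therefore suffices to prove
$$\cdet(A)=\cdet(A_{\Ks'})\,|A|_{k_r,k_r}.$$

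For this I would use the Cramer-type identity for Manin matrices of even type. Let $A^{\rm adj}$ be the matrix whose $(i,j)$ entry is the signed column determinant of $A$ with row $j$ and column $i$ deleted. The claim is
$$A^{\rm adj}A=\cdet(A)\cdot I.$$
The diagonal entries come from expanding $\cdet$ along a column; this is legitimate because $\cdet$ reads the columns left to right, together with the Manin relation allowing the relevant column to be moved to the end. Each off-diagonal entry is the column determinant of a matrix obtained from $A$ by replacing one column by another. Such a matrix is still Manin, and the column determinant of a Manin matrix with two equal columns vanishes: the relations $[a_{i,j},a_{k,l}]=[a_{k,j},a_{i,l}]$ give antisymmetry of $\cdet$ under swapping adjacent columns, i.e. the column analogue of the even-type Manin property.

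I expect this vanishing to be the main obstacle. I would establish it by pairing the terms for $\si$ and $\si\circ(\text{transposition})$ and applying the Manin relation to the two adjacent factors.

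Multiplying $A^{\rm adj}A=\cdet(A)\, I$ on the right by $A^{-1}$ gives $A^{\rm adj}=\cdet(A)\,A^{-1}$. Comparing $(k_r,k_r)$ entries yields
$$\cdet(A_{\Ks'})=\cdet(A)\,\wt a_{k_r,k_r}.$$
Since $\wt a_{k_r,k_r}$ is invertible by sufficient invertibility, with inverse $d_{k_r}(A)=|A|_{k_r,k_r}$, we obtain $\cdet(A)=\cdet(A_{\Ks'})\,d_{k_r}(A)$. This completes the induction, and invertibility of $\cdet(A)$ comes out as a by-product rather than needing a separate argument.
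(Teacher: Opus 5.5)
Your proof is correct. It runs: Cramer's rule $A^{\rm adj}A=\cdet(A)\,I$ for Manin matrices of type $\s_0$, then right multiplication by $A^{-1}$ to get $\cdet(A_{\Ks'})=\cdet(A)\,\wt a_{k_r,k_r}$, then induction on the principal quasiminors, which yields $\Ber^{\s_0}(A)=d_{k_1}(A)\cdots d_{k_r}(A)=\cdet(A)$. The paper gives no proof of its own, only the citation \cite[Lemma 8]{CFR}, and your argument is the standard Cramer's-rule derivation of that factorization.
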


\subsection{Pseudo-differential operators} \label{diff-op}

Let $\cA$ be an associative unital superalgebra and $z$ an even variable.
We denote by $\cA \blb z \brb$ the superspace of formal Laurent series in $z$ with coefficients in $\cA$ and by $\cA\zpz$ the superspace of all formal series of the form
$$
  \sum_{j=-\infty}^s \sum_{i=-\infty}^r a_{i j} z^i \pz^j, \qquad  r, s \in \Z \quad \text{and} \quad a_{i j} \in \cA.
$$
The superspace $\cA \blb z \brb$ is a superalgebra, and we can give $\cA \zpz$ with a superalgebra structure using the rules:
\begin{equation} \label{rule}
\pz \pz^{-1}=\pz^{-1} \pz=1, \quad\pz^{i} z^j=\sum_{k=0}^{\infty}  \binom{i}{k} \binom{j}{k} k! \,   z^{j-k} \, \pz^{i-k},  \quad \mbox{for $i, j \in \Z$.}
\end{equation}
Here, e.g., $\disp{\binom{i}{k}:=\frac{i(i-1)\ldots (i-k+1)}{k!}}$.
We refer to \cite[Section 2.3]{ChL26-1} for further details.
The elements of $\cA\zpz$ are called pseudo-differential operators.

For any nonempty set $S$, we also let $S \zpz$ be the set of all formal series of the form
$$
  \sum_{j=-\infty}^s \sum_{i=-\infty}^r c_{i j} z^i \pz^j, \qquad  r, s \in \Z \quad \text{and} \quad c_{i j} \in S.
$$
For any $\cA$-module $M$, $v \in M$, and $\disp{f:=\sum_{j=-\infty}^s \sum_{i=-\infty}^r a_{ij} z^i \pz^j \in \cA\zpz}$, where $r, s \in \Z$ and $a_{i j} \in \cA$, we define
$$
f v=\sum_{j=-\infty}^s \sum_{i=-\infty}^r (a_{ij} v) z^i \pz^j \in M \zpz.
$$

\subsection{The Gaudin algebra for $\gs$}

For any Lie superalgebra $\fg$, we denote by $U(\fg)$ the universal enveloping algebra of $\fg$.
For any even variable $t$, let $\fg[t, t^{-1}]:=\fg\otimes \C[t, t^{-1}]$ denote the loop algebra of $\fg$, which is a Lie superalgebra with commutation relations
$$
\hspace{1cm}  \big[A \otimes t^r, B \otimes t^s \big]=[A, B] \otimes t^{r+s} \qquad \mbox{for $A, B \in \fg \,$ and $\, r, s \in \Z$.}
$$
Here $[A, B]$ denotes the supercommutator of $A$ and $B$.
We will use the notation
$$
\hspace{1cm} A[r]:=A \otimes t^r \qquad \mbox{for $A \in \fg\,$ and $\,r \in \Z$.}
$$
The superalgebras $\fg[t]:=\fg \otimes \C[t]$ and $t^{-1} \fg[t^{-1}]:=\fg \otimes t^{-1}\C[t^{-1}]$ are subalgebras of $\fg[t, t^{-1}]$ defined in an obvious way.
We identify $\fg$ with the subalgebra $\fg\otimes 1$ of constant polynomials in $\fg[t]$ and hence $U(\fg) \subseteq U(\fg[t])$.

Recall that $\mg:=\gs$.
Let $\Vac(\mg)$ denote the \emph{universal affine vertex algebra at the critical level} associated to the central extension $\wh{\mg}_{\rm crit}$ of the loop algebra $\mg[t,t^{-1}]$; see \cite{Fr07, FBZ, MR} for details. It is also a $\wh{\mg}_{\rm crit}$-module.

The \emph{Feigin--Frenkel center} $\fzg$ is defined to be the center of the vertex algebra $\Vac(\mg)$.
By the state-field correspondence (see \emph{op. cit.} and also \cite[Section 3.1]{ChL25}), $\fzg$ is equal to the subspace of $\mg[t]$-invariants on $\Vac(\mg)$.
That is,
$$
\fzg=\setc*{\! v \in \Vac(\mg)}{\mg[t] v=0 \!}.
$$
By the Poincar\'e--Birkhoff--Witt theorem, $\Vac(\mg)$ can be identified with $U(\fgm)$ (as superspaces).
Moreover, the superalgebra $U(\fgm)$ is equipped with the (even) derivation $\T$ defined by
$$
\T(1)=0 \qquad \text{and}\qquad \T(A[-r])=r A[-r-1]
$$
for $A \in \mg$ and $r \in \N$.
Note that $\T$ corresponds to the translator operator on $\Vac(\mg)$, and that $\fzg$ can be viewed as a commutative subalgebra of $U(\fgm)$ and is $\T$-invariant.

Fix $\s:=(0^p, 1^q, 0^m, 1^n) \in \cS_\mn$.
Recall the totally ordered set $\Is$ defined in \secref{gls}.
Let $\tau=-\partial_t$, where $t$ and $\partial_t$ satisfy the rules similar to \eqref{rule},
and let
$$
\cT_\pdn=\Big[\de_{i,j}\tau+(-1)^{2i}E_{i,j}[-1]\Big]_{i,j \in \Is}.
$$
It is known that $\cT_\pdn$ is an amply invertible Manin matrix of type $\s$ over $U(\fgm) \blb \tau^{-1} \brb$ (cf. \cite[Lemma 3.1]{MR}).
Furthermore,
\begin{equation} \label{Bers-T}
\Bers  \! \left(\cT_\pdn \right)=\sum_{i=-\infty}^{p+m-q-n}b_i \tau^i,
\end{equation}
for some $b_i \in U(\fgm)$.
Define $\wh{\fz}_\pdn$ to be the subalgebra of $U(\fgm)$ generated by
$$
\{\T^r b_i \, | \, i \le p+m-q-n, \, i \in \Z, \, r \in \Zp \}.
$$
By \cite[Corollary 3.3]{MR} (see also \cite[Proposition 3.3]{ChL25}),
$
b_i \in \fzg
$
for all $i \in \Z$ with $i \leq p+m-q-n$.
Since $\fzg$ is $\T$-invariant, we see that $\wh{\fz}_\pdn$ is a commutative subalgebra of $\fzg$.
In the non-super case, i.e., $q=n=0$, it is well known that $\fzg=\wh{\fz}_\pdn$ (see, e.g., \cite{FF, CM, Mo18}).
This is also true if $\mg=\gl_{1|1}$ (\cite{MM15}) or $\mg=\gl_{2|1}$ (\cite{AN}).
However, it is still unknown whether $\fzg=\wh{\fz}_\pdn$ holds in general.

Fix $\ell \in \N$. Define
$$
\Xl=\setc*{\! (z_1, \ldots,z_\ell)\in \C^\ell}{z_i\not=z_j \,\, \mbox{for any $i\not=j$} \!},
$$
called the \emph{configuration space} of $\ell$ distinct points on $\C^\ell$.
For $\z\in \Xl$, we have a superalgebra homomorphism
$$
 \Psi^\z :  U(\fgm)   \longrightarrow U(\mg)^{\otimes \ell} \blb z^{-1} \brb
$$
given by
 $$
  \Psi^\z(A[-r])=\sum_{i=1}^\ell \frac{A^{(i)}}{(z_i-z)^{r}}, \qquad \mbox{for $A \in \mg \,$ and $\, r \in \N$.}
 $$
 Hereafter $A^{(i)}:=\underbrace{1\otimes\cdots\otimes1\otimes \stackrel{i}{A}\otimes1\otimes\cdots\otimes1}_{\ell}$ for $i=1, \ldots, \ell$, and every rational function in $z$ represents its power series expansion at $\infty$.
The map $\Psi^\z$ extends to a superalgebra homomorphism
$$
 \Psi^\z :  U(\fgm) \blb \tau^{-1} \brb  \longrightarrow U(\mg)^{\otimes \ell} \zpz
$$
given by
 $$
 \Psi^\z  \left(\sum_{i=-\infty}^r a_i \tau^i \right)=\sum_{i=-\infty}^r \Psi^\z (a_i) \pz^i
 $$
for $a_i \in U(\fgm)$ and $r \in \Z$.

For $A \in \mg$, set
\begin{equation}\label{A(z)}
A(z)=\sum_{i=1}^\ell \frac{A^{(i)}}{z-z_i}.
\end{equation}
Consider the matrix
$$
\cLzs:= \Big[ \Psi^\z \big(\de_{i,j}\tau+(-1)^{2i}E_{i,j}[-1] \big)\Big]_{i,j \in \Is} =\Big[\de_{i,j}\pz-(-1)^{2i}E_{i,j}(z)\Big]_{i,j \in \Is}.
$$
Write $\cL^\z=\cLzs$ for simplicity.
Evidently, $\cL^\z$ is an amply invertible Manin matrix of type $\s$ over the superalgebra $U(\mg)^{\otimes \ell}  \zpz$.
The following proposition is a consequence of \propref{HM}.

\begin{prop} \label{Ber-sig-L}

$\Ber^{\s^\si} \left(\big(\cL^\z\big)^\si \right)=\Bers \! \left( \cL^\z\right)$ for any $\si \in \fS_\Is$.
\end{prop}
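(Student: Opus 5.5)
The plan is to deduce the statement directly from \propref{HM}, applied to the matrix $\cL^\z$ over the superalgebra $\cA:=U(\mg)^{\otimes \ell}\zpz$, with $\Ks=\Is$ carrying the total order $\lei$. Two hypotheses of \propref{HM} have to be checked: that $\cL^\z$ is a Manin matrix of type $\s$, and that it is amply invertible.

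First I will check the Manin property. The matrix $\cT_\pdn$ is a Manin matrix of type $\s$ over $U(\fgm)\blb \tau^{-1}\brb$ (cf. \cite[Lemma 3.1]{MR}). The map $\Psi^\z$ is a superalgebra homomorphism $U(\fgm)\blb\tau^{-1}\brb\to \cA$. Since it preserves parity, it preserves the type condition $|a_{i,j}|=\bs_i+\bs_j$. Since it preserves supercommutators, it carries each Manin relation $[a_{i,j},a_{k,l}]=(-1)^{s_is_j+s_is_k+s_js_k}[a_{k,j},a_{i,l}]$ to the corresponding relation. The entrywise image of $\cT_\pdn$ is $\cL^\z$ by definition, so $\cL^\z$ is a Manin matrix of type $\s$.

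Next I will check ample invertibility. Every standard submatrix $(\cL^\z)_{\Jp}$ has the form $\pz\cdot I+N$, where $N$ has entries in $U(\mg)^{\otimes\ell}\blb z^{-1}\brb$ of order at most $z^{-1}$. Each principal quasiminor is computed through inverses of matrices of this same shape. Such a matrix is inverted by the convergent geometric series $\pz^{-1}\sum_k(-N\pz^{-1})^k$ in $\cA$; this converges because each factor $N\pz^{-1}$ lowers the degree in $\pz$. By the same reasoning, the relevant diagonal entries of the inverses are of the form $\pz^{-1}(1+\text{lower terms})$ and are therefore invertible. So all principal quasiminors of all standard submatrices exist, and $\cL^\z$ is amply invertible. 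Alternatively, one can argue that ample invertibility transfers from $\cT_\pdn$, because $\Psi^\z$ maps two-sided inverses to two-sided inverses.

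With both hypotheses in place, \propref{HM} gives ${\rm Ber}^{\s^\si}((\cL^\z)^\si)=\Bers(\cL^\z)$ for every $\si\in\fS_\Is$, which is the claim. The only step needing real care is ample invertibility, specifically the convergence of the inverses in the completed algebra $\cA$. I expect it to be routine, since this is the same filtration argument already used for $\cT_\pdn$.
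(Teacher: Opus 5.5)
Your proposal is correct and follows the paper's route. The paper simply asserts that $\cL^\z$ is evidently an amply invertible Manin matrix of type $\s$ over $U(\mg)^{\otimes \ell}\zpz$ and then invokes \propref{HM}. Your two verifications, transporting the Manin relations from $\cT_\pdn$ along the superalgebra homomorphism $\Psi^\z$ and inverting $\pz I+N$ by a geometric series, just fill in what the paper treats as evident.
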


By \eqref{Bers-T},
$$
\Bers \! \left( \cL^\z\right)=\sum_{i=-\infty}^{p+m-q-n} b_i^\z \pz^i, \qquad\mbox{where} \quad b_i^\z:= \Psi^\z (b_i) \in  U(\mg)^{\otimes \ell} \blb z^{-1} \brb.
$$
Let $\cA_\pdn^\z$ be the subalgebra of $U(\mg)^{\otimes \ell}$ generated by the coefficients of the series $b_i^\z$, for $i \in \Z$ with $i \le p+m-q-n$.
It is called the \emph{Gaudin algebra for $\mg$}.

Write
$
\Az=\cA_\pdn^\z.
$
We remark that $\Az$ equals the subalgebra of $U(\mg)^{\otimes \ell}$ generated by the coefficients of $\Psi^\z(S)$ for $S \in \wh{\fz}_\pdn$ (as $\Psi^\z ( \T(S) )=d/dz ( \Psi^\z(S))$).
According to \cite[Corallary 3.6]{MR}, the algebra $\Az$ is a commutative subalgebra of $U(\mg)^{\otimes \ell}$.
It also commutes with the diagonal action of $\mg$.
In particular, $\Az$ acts on any weight space and any $\si\mm$singular space of the $\ell$-fold tensor product of arbitrary $\mg$-modules for any $\si \in \ov\fS_\Is$.

For any $\Az$-module $V$, the \emph{Gaudin algebra for $V$}, denoted by $(\Az)_V$, is defined to be the image of $\Az$ in $\End(V)$.
Suppose that $V$ is an $\Az$-module and $v$ is an eigenvector of $(\Az)_V$. Then there exists $c_i \in \C \blb z^{-1} \brb$ such that $b_i^\z v=c_i v$ for $i \le p+m-q-n$.
We call
$$
\cD_{\tns v}:= \sum_{i=-\infty}^{p+m-q-n} c_i  \pz^i
$$
the pseudo-differential operator associated to $\big(\Bers \! \left( \cL^\z\right), v \big)$.
It is obvious that $\Bers \! \left( \cL^\z\right) v=\cD_{\tns v} v$.

The isomorphism $\iota: \mg\oplus \C K \longrightarrow\og$, given by \eqref{iso-e}, restricts to an embedding $\iota: \mg \longrightarrow\og$, which induces a superalgebra homomorphism
$\iota: U(\mg) \longrightarrow U(\og)$ and hence a superalgebra homomorphism
$$
\uiot:=\iota^{\otimes \ell} : U(\mg)^{\otimes \ell} \longrightarrow U(\og)^{\otimes \ell}.
$$
The map $\uiot$ extends naturally to a superalgebra homomorphism
$$
\uiot : U(\mg)^{\otimes \ell}\zpz \longrightarrow U(\og)^{\otimes \ell}\zpz
$$
such that $\uiot (z^r)=z^r$ and $\uiot (\pz^r)=\pz^r$ for $r \in \Z$.
Consider the matrix
$$
\ocLzs:= \Big[\uiot \big(\de_{i,j}\pz-(-1)^{2i}E_{i,j}(z) \big) \Big]_{i,j \in \Is}=\Big[ \de_{i,j}\pz-(-1)^{2i} \oE_{i,j}(z) +\de_{i,j}\{i\}K(z)\Big]_{i,j \in \Is},
$$
where $\oE_{i,j}(z)$ and $K(z)$ are defined as in \eqref{A(z)}, and
$$
\{i\}:=\begin{cases}
\, 1 & \,\, \mbox{if} \ \  i \in \Ipq,\\
\, 0 & \,\, \mbox{if} \ \  i \in \Imn.
\end{cases}
$$
Write $\ov\cL^\z=\ocLzs$.
It is clear that $\uiot\big( \Bers ( \cL^\z)\big)=\Bers  ( \ov\cL^\z )$.
As in \propref{Ber-sig-L}, we have:

\begin{prop} \label{Ber-sig-hL}
$\Ber^{\s^\si} \left(\big(\ov\cL^\z \big)^\si \right)=\Bers \! \left( \ov\cL^\z \right)$ for any $\si \in \fS_\Is$.
\end{prop}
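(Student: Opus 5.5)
The statement follows directly from \propref{HM}, applied to a matrix obtained from $\cL^\z$ by the homomorphism $\uiot$. I plan three steps.

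First, I will show that $\ov\cL^\z$ is a Manin matrix of type $\s$. The Manin relations for $\cL^\z$ are identities in the superalgebra $U(\mg)^{\otimes \ell}\zpz$. The map $\uiot$ is a superalgebra homomorphism that fixes $z^{\pm1}$ and $\pz^{\pm1}$ and sends each entry of $\cL^\z$ to the corresponding entry of $\ov\cL^\z$. Applying $\uiot$ to the relations
$$[a_{i,j}, a_{k,l}]=(-1)^{s_i s_j+s_i s_k+s_j s_k}[a_{k,j}, a_{i,l}]$$
therefore gives the same relations for $\ov\cL^\z$. Parities are preserved because $\iota$ is even and $K$ is even. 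Hence $\ov\cL^\z$ is a Manin matrix of type $\s$.

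Second, I will show that $\ov\cL^\z$ is amply invertible. Let $\cL^\z_{\Jp}$ be a standard submatrix of $\cL^\z$. Each principal quasiminor is built from inverses of elements of the form $\pz+(\text{lower order terms in } \pz^{-1})$, and these inverses are computed by geometric series in $\pz^{-1}$. Since $\uiot$ is continuous in the $\pz^{-1}$-adic sense and preserves leading terms, each inverse maps to an inverse. Concretely, $\uiot$ sends a two-sided inverse of a matrix to a two-sided inverse of its image, and it sends an invertible element to an invertible element. So every quasiminor of $\ov\cL^\z_{\Jp}$ is well defined and equals $\uiot$ of the corresponding quasiminor of $\cL^\z_{\Jp}$. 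The shift by $\de_{i,j}\{i\}K(z)$ on the diagonal only perturbs lower order terms in $\pz$, so this argument is unaffected by it.

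Third, I will apply \propref{HM} to $A=\ov\cL^\z$ over the superalgebra $U(\og)^{\otimes\ell}\zpz$. This gives $\Ber^{\s^\si}((\ov\cL^\z)^\si)=\Bers(\ov\cL^\z)$ for every $\si\in\fS_\Is$, which is the claim. Alternatively, one can apply $\uiot$ to both sides of \propref{Ber-sig-L}, using that $\uiot$ commutes with taking quasiminors of $(\cL^\z)^\si$ by the same reasoning as in the second step.

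I expect the second step to be the only point requiring care: one must check that the inverses defining the quasiminors are preserved by $\uiot$ in the completed algebra. Since $\uiot$ is a homomorphism respecting the $\pz$-filtration, the argument reduces to the observation already made there.
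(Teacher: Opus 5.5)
Your proposal is correct and follows the paper's route: the paper obtains this "as in \propref{Ber-sig-L}", by applying \propref{HM} to $\ov\cL^\z$. That matrix is an amply invertible Manin matrix of type $\s$ because it is the image of $\cL^\z$ under the unital even homomorphism $\uiot$, and your alternative of applying $\uiot$ to \propref{Ber-sig-L} matches the paper's remark $\uiot(\Bers(\cL^\z))=\Bers(\ov\cL^\z)$. The remarks about $\pz^{-1}$-adic continuity and leading terms are unnecessary, since any unital homomorphism carries two-sided inverses to two-sided inverses.
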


\begin{prop} \label{A-iso}
Let $M=M_1 \otimes \cdots \otimes M_\ell$, where $M_i \in \Os$ for all $i=1, \ldots, \ell$, and let $\si \in \ov \fS_\Is$.
For any singular weight $\ov\xi$ of $M$, the spaces $M_{\ov\xi}^{\sing}$ and $M_{\ov\xi^\si}^{\si\mm\sing}$ are $\Az$-modules.
Moreover, the map $\varphi: M_{\ov\xi}^{\sing}\longrightarrow M_{\ov\xi^\si}^{\si\mm\sing}$, given in \propref{Lth}, is an $\Az$-isomorphism.
\end{prop}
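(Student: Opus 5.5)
The plan has three parts: show that both spaces are $\Az$-modules, show that $\varphi$ intertwines the $\Az$-action, and note that it is bijective by \propref{Lth}. Throughout, $M$ is an $\og$-module, so $U(\mg)^{\otimes \ell}$ acts on $M$ through $\uiot$, and $\Az$ acts on $M$ through $\uiot(\Az)\subseteq U(\og)^{\otimes \ell}$.

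\emph{Invariance of the two spaces.} The algebra $\Az$ commutes with the diagonal action of $\mg$, so $\uiot(\Az)$ commutes with the diagonal action of $\iota(\mg)$. Every $\oE_{i,j}$ with $i\neq j$ equals $\iota(E_{i,j})$, since $\mathrm{Str}(\mathfrak J E_{i,j})=0$ for $i\ne j$. Each $\oEii$ differs from $\iota(\Eii)$ only by a multiple of $K$, and $K$ is central. Hence the diagonal action of every $\oE_{i,j}$ commutes with $\uiot(\Az)$. It follows that $\uiot(\Az)$ preserves each $\oh$-weight space of $M$. It also preserves the kernels of the operators $\oE_{i,j}$ with $i\lesi j$ (respectively $i\lei j$). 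Therefore $M^{\sing}_{\ov\xi}$ and $M^{\si\mm\sing}_{\ov\xi^\si}$ are $\Az$-submodules of $M$.

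\emph{$\varphi$ is an $\Az$-homomorphism.} By \propref{Lth}, $\varphi(v)=Av$, where $A$ is a product of elements $\oE_{i,j}$ with $i\neq j$, acting diagonally on $M$. By the previous step, each such diagonal operator commutes with every element $a\in\Az$ acting on $M$. Parity signs do not arise, because $\Az$ is generated by coefficients of $\Bers(\cL^\z)$, which are even elements. Hence $a\,\varphi(v)=aAv=Aav=\varphi(av)$ for all $v\in M^{\sing}_{\ov\xi}$. Combined with the bijectivity of $\varphi$ from \propref{Lth}, this makes $\varphi$ an $\Az$-isomorphism.

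\emph{Main obstacle.} The delicate point is the compatibility of the central extension with the Gaudin algebra: $\iota$ changes the Cartan elements by multiples of $K$, so one must check that this shift does not spoil the commutation. It does not, because $K$ is central in $\og$. A second point to verify is that $\varphi$ is defined on the whole tensor product $M$ through its diagonal $U(\og)$-action, and that \propref{Lth} applies to $M$. It does, since $M\in\Os$ by \propref{tensor-cat}.
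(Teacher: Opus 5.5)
Your proposal is correct and follows essentially the same route as the paper. The paper's one-line proof uses the same two facts: $\ov E_{i,j}$ and $E_{i,j}$ act identically on $M$ for $i\neq j$, and $\Az$ commutes with the diagonal $\mg$-action. Together these give the $\Az$-invariance of both ($\si$-)singular weight spaces and the $\Az$-linearity of $v\mapsto Av$, with bijectivity supplied by \propref{Lth}. Your extra checks (the shift of Cartan elements by the central $K$, the evenness of the generators of $\Az$, and $M\in\Os$ via \propref{tensor-cat}) make explicit details the paper leaves implicit.
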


\begin{proof}
As $\ov E_{i,j}v=E_{i,j}v$ for all $v\in M$ and $i\not=j$, the proposition follows from \propref{Lth} and the fact that the action of $\Az$ commutes with the action of $\mg$.
\end{proof}

\section{Gaudin algebras for unitarizable modules} \label{Gaudin-Uni}

In this section, we first review the results of \cite{ChL25} on the Gaudin algebra for the general linear Lie (super)algebra over a tensor product of irreducible polynomial modules.
Subsequently, we extend these results from the case of irreducible polynomial modules to that of unitarizable highest weight modules.

\subsection{Tensor products of polynomial modules} \label{poly}
For $m, n \in \Z_+$ that are not all zero, let $\mg_\mn=\mg_{(0,n|m,0)} \cong \gl_\mn$, $\mb_\mn=\mb_{(0,n|m,0)}$, and $\mh_\mn=\mh_{(0,n|m,0)}$.

Let $\Js=\{-n+\hf, \ldots, -\thf, -\hf \} \cup \{ 1, 2, \ldots, m \}$.
Define $\theta \in \fS_\Js$ by sending
$$
  1, \ldots, m, -n+\hf, \ldots, -\hf \quad \mbox{ to}\quad -n+\hf, \ldots, -\hf, 1, \ldots, m,
$$
   respectively.
Then $\mb_\mn^\theta$ is the Borel subalgebra of $\mg_\mn$ with respect to $<_{(\Js, \theta)}$.
Write
$$
\Az_\mn=\Az_{(0,n|m,0)}.
$$
Recall that $\Pmn$ denotes the set of all $(\mn)$-hook partitions.
For any $\la \in \Pmn$,
let
$$
\ovla^\mn=\sum_{i=1}^{m}\la_i\ep_i+\sum_{i=1}^{n} \left\langle \la_i^\prime-m \right\rangle\ep_{-n+i-\hf}  \in \mh_\mn^{*}
$$
and
\begin{equation} \label{weight-neg}
{\mf X}^+_\mn=\setc*{\! \ovla^\mn \in \mh_\mn^{*}}{\la \in \Pmn \!},
\end{equation}
whose elements are dominant weights with respect to $\mb_\mn^\theta$.

Define
$$
\Xi_\mn=\sum_{i=1}^{m} \Zp \ep_i+\sum_{i=1}^{n} \Zp \ep_{-n+i-\hf}.
$$
A $\mg_\mn$-module $M$ is called a \emph{polynomial module} if $M$ is $\mh$-semisimple and every weight of $M$ belongs to $\Xi_\mn$.
Let $\mc{C}_\mn$ denote the category of polynomial $\mg_\mn$-modules; the morphisms in $\mc{C}_\mn$ are $\mg_\mn$-homomorphisms.
Also, every irreducible $\mg_\mn$-module in $\mc{C}_\mn$ is finite-dimensional (see, e.g., \cite[Proposition 2.2]{CW}).

The following proposition is well known (see, e.g., \cite[Theorem 3.26 and Theorem 3.27]{CW}) and also the proof of \cite[Theorem 6.2.2]{Lus}).

\begin{prop}\label{Cmn}
The category $\mc{C}_\mn$ is a semisimple tensor category, and each polynomial $\mg_\mn$-module $M$ decomposes into a direct sum of irreducible $\mg_\mn$-modules of the forms $L^\theta(\mg_\mn,\xi)$ with $\xi \in {\mf X}^+_\mn$.
 \end{prop}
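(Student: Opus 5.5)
The statement to be proved is \propref{Cmn}: $\mc{C}_\mn$ is a semisimple tensor category, and every polynomial $\mg_\mn$-module is a direct sum of modules $L^\theta(\mg_\mn,\xi)$ with $\xi\in{\mf X}^+_\mn$. I would assemble it from the standard theory of polynomial $\gl_\mn$-modules via Schur--Sergeev duality, adapted to the ordering $\Js$ and the Borel subalgebra $\mb^\theta_\mn$.

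\emph{Tensor closure.} First I would check that $\mc{C}_\mn$ is closed under tensor products, submodules, quotients and direct sums. If $M,N$ are $\mh$-semisimple with all weights in $\Xi_\mn$, then $M\otimes N$ is $\mh$-semisimple. Its weights are sums of weights of $M$ and $N$, and $\Xi_\mn$ is a monoid, so they again lie in $\Xi_\mn$. Submodules and quotients of $\mh$-semisimple modules are $\mh$-semisimple with weights among the original ones. Hence $\mc{C}_\mn$ is an abelian tensor subcategory of the category of $\mg_\mn$-modules.

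\emph{Classification of simples.} Next I would identify the irreducible objects. Any irreducible polynomial module $M$ is finite-dimensional by \cite[Proposition 2.2]{CW}. It therefore has a $\mb^\theta_\mn$-highest weight vector, so $M\cong L^\theta(\mg_\mn,\xi)$ for some $\xi$. Because the weights lie in $\Xi_\mn$ and are bounded, the standard argument shows that $\xi$ must come from an $(m|n)$-hook partition. One restricts to the even part $\gl_m\oplus\gl_n$ and uses odd reflections to pass between the standard Borel and $\mb^\theta_\mn$. Under the ordering in $\Js$ that places the odd indices first, this is exactly the shape $\ovla^\mn$, so $\xi\in{\mf X}^+_\mn$. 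This is the content of \cite[Theorem 3.26]{CW}; I would invoke it after checking that our $\theta$-ordering matches theirs.

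\emph{Semisimplicity: the main obstacle.} A polynomial module need not be finite-dimensional, so I would first reduce to finite-dimensional submodules. Every vector lies in a finite-dimensional submodule, since weight spaces are finite-dimensional in the relevant cases and $U(\mg_\mn)v$ has weights in $\Xi_\mn$ of bounded total degree. The degree $\sum_i\mu(E_{i,i})$ is preserved by $\mg_\mn$ and is nonnegative on $\Xi_\mn$. For fixed degree $d$, the finite-dimensional polynomial modules of degree $d$ are exactly the modules arising from $(\C^{m|n})^{\otimes d}$ through Schur--Sergeev duality \cite[Theorem 3.27]{CW}, which gives complete reducibility. To make this rigorous, I would check that any finite-dimensional degree-$d$ polynomial module is a quotient of a direct sum of copies of $(\C^{m|n})^{\otimes d}$ (equivalently, that $\mathrm{Ext}^1$ vanishes between polynomial simples). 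Here I would follow \cite[Theorem 6.2.2]{Lus}, realizing polynomial modules as modules over the Schur superalgebra, which is semisimple. A module that is a sum of finite-dimensional semisimple submodules is semisimple. This yields the decomposition into simples $L^\theta(\mg_\mn,\xi)$ with $\xi\in{\mf X}^+_\mn$ and completes the argument.
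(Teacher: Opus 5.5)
Your outline matches the paper's, which gives no argument beyond citing \cite[Theorems 3.26 and 3.27]{CW} and the proof of \cite[Theorem 6.2.2]{Lus}. Replacing Lusztig's argument for infinite-dimensional objects by local finiteness is fine, but the justification needs repair. Objects of $\mc{C}_\mn$ can have infinite-dimensional weight spaces (e.g.\ infinite direct sums). Argue instead that only finitely many elements of $\Xi_\mn$ have a given degree. Hence only finitely many weight components of $U(\mf{n}^\pm)$ act nontrivially in $U(\mg_\mn)v=U(\mf{n}^-)U(\mf{n}^+)v$, and each is finite-dimensional. Your ordering remark is also reversed. The order on $\Js$ puts the odd indices first and gives $\mb_\mn$, whose highest weights are $\wt\la^\mn$. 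The weights $\ovla^\mn$ are highest weights for $\mb_\mn^\theta$, where the even indices come first.

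There is a genuine gap: the step you call the main obstacle is not closed by what you propose. Schur--Sergeev duality gives semisimplicity of $(\C^{m|n})^{\otimes d}$ and of its centralizer $S(m|n,d)$. The claim that every finite-dimensional degree-$d$ object of $\mc{C}_\mn$ is an $S(m|n,d)$-module is, given the classification of simples, equivalent to the complete reducibility you are proving. Moreover, \cite[Theorem 6.2.2]{Lus} concerns integrable modules of quantum groups and says nothing about Schur superalgebras. Finite-dimensional $\gl_\mn$-modules are in general not semisimple, so you need a real reason why the $L^\theta(\mg_\mn,\xi)$, $\xi\in{\mf X}^+_\mn$, admit no nonsplit extensions in $\mc{C}_\mn$. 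Either quote this from \cite{CW}, as the paper does, or prove it. One way:
(a) Show distinct $\xi\in{\mf X}^+_\mn$ have distinct central characters. With $\rho$ suitably normalized, the central character of $\ovla^\mn$ is the formal difference of $X=\{\la_i+m-i\}_{i\le m}$ and $O=\{j-1-\langle\la'_j-m\rangle\}_{j\le n}$. The hook condition forces $[\la_m,n-1]\subseteq O$ and $X\cap O=X\cap[\la_m,n-1]$, so $X$ and $O$ are recovered from $X\setminus O$ and $O\setminus X$.
(b) Then each generalized central-character summand of a finite-dimensional polynomial module has all composition factors isomorphic to one $L^\theta(\mg_\mn,\xi)$, and it is generated by its $\xi$-weight space. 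Every vector there generates a highest weight module with one-dimensional $\xi$-weight space and only such composition factors, i.e.\ a copy of $L^\theta(\mg_\mn,\xi)$.
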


\begin{rem}
Note that $\la \in \Pmn$ if and only if  $\la^\prime \in \cP_{n|m}$.
For any $\la \in \Pmn$,
let
\begin{equation} \label{weight-wtla}
\wt\la^\mn= \sum_{j=1}^{n} \la^\prime_{j}  \ep_{-n+j-\hf}   +\sum_{j=1}^m\langle\la_j-n\rangle\, \ep_j.
\end{equation}
For any $\xi:=\wt\la^\mn$ with $\la \in \Pmn$, we have $\xi^\theta=\ovla^\mn$ by \cite[Theorem 2.55]{CW}, and hence
\begin{equation} \label{L-theta}
L(\mg_\mn,\wt\la^\mn)=L^\theta(\mg_\mn,\ovla^\mn).
\end{equation}
Therefore, $L(\mg_\mn,\xi)$  is an irreducible polynomial $\mg_\mn$-module if and only if $\xi=\wt\la^\mn$ for some $\la\in \Pmn$.
\end{rem}

A  finite-dimensional commutative associative unital algebra $\cA$ is called a \emph{Frobenius algebra} if there is a nondegenerate symmetric bilinear form $(\cdot, \cdot)$ on $\cA$ such that
$$
(ab, c)=(a, bc) \quad \mbox{for all $a, b, c \in \cA$.}
$$

\begin{lem} [{\cite[Lemma 2.7]{Lu20}}] \label{cyc-Frob}
Let $\cA$ be a commutative associative unital algebra, and let $V$ be a finite-dimensional cyclic $\cA$-module.
Suppose that $V$ admits a nondegenerate symmetric bilinear form $\langle \cdot, \cdot \rangle$ with respect to which $\cA$ is symmetric (i.e., $\langle av, w \rangle=\langle v, aw \rangle$ for all $a \in \cA$ and $v, w \in V$), then the image of $\cA$ in $\End(V)$ is a Frobenius algebra.
\end{lem}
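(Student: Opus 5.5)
The plan is to identify the image of $\cA$ in $\End(V)$ with $V$ itself, and then use this identification to transport the form $\langle\cdot,\cdot\rangle$ from $V$ to the image algebra.

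Let $\bar\cA$ denote the image of $\cA$ in $\End(V)$. It is a commutative, associative, unital algebra. It is also finite-dimensional, because it sits inside $\End(V)$ and $V$ is finite-dimensional.

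Fix a cyclic vector $v_0 \in V$, so that $V=\cA v_0$. Consider the evaluation map $\epsilon: \bar\cA \to V$ given by $\bar a \mapsto \bar a v_0$.
\begin{enumerate}[\normalfont(a)]
\item It is surjective, since $v_0$ is cyclic.
\item It is injective by commutativity. If $\bar a v_0=0$, then for every $b\in\cA$ we have $\bar a(bv_0)=b(\bar a v_0)=0$. Hence $\bar a$ kills all of $V=\cA v_0$, so $\bar a=0$ in $\End(V)$.
\end{enumerate}
So $\epsilon$ is a linear isomorphism $\bar\cA\cong V$.

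Now define a bilinear form on $\bar\cA$ by
$$
(\bar a,\bar b):=\langle \bar a v_0,\bar b v_0\rangle .
$$
Each property needed for a Frobenius form follows directly.
\begin{enumerate}[\normalfont(a)]
\item It is symmetric, because $\langle\cdot,\cdot\rangle$ is symmetric.
\item It is nondegenerate, because $\epsilon$ is an isomorphism and $\langle\cdot,\cdot\rangle$ is nondegenerate. If $(\bar a,\bar b)=0$ for all $\bar b$, then $\bar a v_0$ pairs to zero with every vector of $V$, so $\bar a v_0=0$ and hence $\bar a=0$.
\item It is invariant. Since the action is symmetric with respect to $\langle\cdot,\cdot\rangle$,
$$
(\bar a\bar b,\bar c)=\langle \bar b v_0,\bar a\bar c v_0\rangle .
$$
Using commutativity, $\bar a\bar c=\bar c\bar a$, and applying symmetry of the form and of the action once more, this equals $(\bar b,\bar c\bar a)$. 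By commutativity of $\bar\cA$ this is $(\bar a,\bar b\bar c)$.
\end{enumerate}
Hence $\bar\cA$ is Frobenius.

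There is no serious obstacle here. The only point requiring care is the injectivity of the evaluation map, and that is where commutativity of $\cA$ is used essentially.
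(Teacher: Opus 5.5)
Your proof is correct and is the standard argument for this lemma. The paper gives no proof of its own, only the citation \cite[Lemma 2.7]{Lu20}, and your proof, like the standard one, transports $\langle\cdot,\cdot\rangle$ along the isomorphism $\bar a\mapsto \bar a v_0$ (injective by commutativity, surjective by cyclicity).

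The only flaw is in the justifications for the invariance step: the passage from $(\bar b,\bar c\bar a)$ to $(\bar a,\bar b\bar c)$ uses the symmetry of the action and of the form, not commutativity alone. The direct chain $(\bar a\bar b,\bar c)=\langle \bar b\bar a v_0,\bar c v_0\rangle=\langle \bar a v_0,\bar b\bar c v_0\rangle=(\bar a,\bar b\bar c)$ avoids this.
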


For $i=1, \ldots, \ell$, let $L_i$ be an irreducible polynomial $\mg_\mn$-module.
Define
$$
\mL=L_1 \otimes \cdots \otimes L_\ell.
$$
The following theorem has been established in \cite[Theorem 4.7 and Theorem 4.10]{ChL25}.
The nondegenerate symmetric bilinear form below is given in the proof of \cite[Theorem 4.7]{ChL25}.

\begin{thm}\label{ChL25}

For $\z \in \Xl$, we have:

\begin{enumerate}[\normalfont(i)]
\item $\mL^{\theta\mm\sing}$ is a cyclic $\Az_\mn$-module.

 \item $\mL^{\theta\mm\sing}$ admits a nondegenerate symmetric bilinear form  with respect to which $\Az_\mn$ is symmetric. Moreover, the bilinear form is nondegenerate on every $\theta\mm$singular weight space of $\mL$.

\item $(\Az_\mn)_{\mL^{\theta\mm\sing}}$ is a Frobenius algebra.

\item $(\Az_\mn)_{\mL^{\theta\mm\sing}}$ is diagonalizable with a simple spectrum for generic $\z \in \Xl$.

\end{enumerate}

\end{thm}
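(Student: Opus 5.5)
The statement collects results proved in \cite[Theorem 4.7 and Theorem 4.10]{ChL25}, so in the paper it is quoted rather than reproved. Still, here is how I would organize a proof. The order is: (i), then (ii), then (iii) as a formal consequence of those two, and finally (iv).

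Part (iii) is formal once (i) and (ii) are available. The space $\mL^{\theta\mm\sing}$ is finite-dimensional, since each $L_i$ is a finite-dimensional irreducible polynomial module. The algebra $\Az_\mn$ is commutative by \cite[Corollary 3.6]{MR}. If $\mL^{\theta\mm\sing}$ is cyclic and carries a nondegenerate symmetric bilinear form for which $\Az_\mn$ is symmetric, then \lemref{cyc-Frob} applies directly and $(\Az_\mn)_{\mL^{\theta\mm\sing}}$ is Frobenius.

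For (ii), I would build the form from the Shapovalov forms on the $L_i$. Take the tensor product of the contravariant forms with respect to the anti-involution $E_{i,j}\mapsto E_{j,i}$, adjusted by the signs needed in the super setting, and restrict it to $\theta$-singular vectors. Symmetry of $\Az_\mn$ then has two ingredients:
\begin{itemize}
\item the coefficients of $\Bers(\cL^\z)$ are mapped to themselves by the transpose anti-involution, up to passing to a reordered Berezinian;
\item that reordered Berezinian equals the original by \propref{HM} and \propref{Ber-sig-L}.
\end{itemize}
For nondegeneracy on each $\theta$-singular weight space, I would use complete reducibility of $\mL$ from \propref{Cmn}. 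The singular vectors of a fixed weight $\mu$ correspond to highest weight vectors of the isotypic component of type $L^\theta(\mg_\mn,\mu)$. The Shapovalov form is nondegenerate on each irreducible summand, and summands of different types are orthogonal. Hence its restriction to $\mL^{\theta\mm\sing}_\mu$ is nondegenerate.

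For (i) and (iv), the plan is to reduce to the even case. I would realize each $L_i$, with $L_i$ corresponding to $\la^{(i)}\in\Pmn$, inside a supersymmetric power of $\C^{m|n}\otimes\C^{d}$. Super Howe duality for $(\gl_\mn,\gl_d)$ then identifies $\mL^{\theta\mm\sing}_\mu$ with a $\gl_d$-weight or singular subspace of a tensor product of polynomial $\gl_d$-modules. Under this identification one has to show that $\Az_\mn$ becomes a Bethe algebra on the $\gl_d$ side, as in the bispectral $(\gl_k,\gl_\ell)$-duality of Mukhin--Tarasov--Varchenko. This would be checked by comparing generating Berezinians with column determinants via \propref{CFR}. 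Cyclicity and simple spectrum for generic $\z$ then transfer from the even results \cite{MTV09-1, MTV09-2, Ry20}, which are the known cyclicity and simple-spectrum theorems for Bethe algebras of $\gl_d$. Alternatively, for (iv) one can argue directly: simple spectrum follows from (i) plus semisimplicity. For $\z$ real the algebra is self-adjoint for a positive form, which gives semisimplicity, and the set of $\z$ where the spectrum is simple is Zariski open in $\Xl$, hence generic.

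I expect the main obstacle to be the duality step: proving that the Berezinian generating series of type $\s$ for $\gl_\mn$ matches the $\gl_d$ Bethe generating series under Howe duality. This is a Capelli/MTV-type identity in the super setting, and both the signs and the ordering of odd indices must be controlled using \propref{Ber-sig-L}.
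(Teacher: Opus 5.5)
There is a genuine gap in (i). Your derivation of (iii) from (i) and (ii) via \lemref{cyc-Frob} is fine. So is your alternative reduction of (iv) to (i): cyclicity plus semisimplicity forces simple spectrum, semisimplicity comes from self-adjointness at real $\z$, and Zariski openness extends it to generic $\z$. Both, however, hinge on (i), and the Howe-duality step you propose for (i) does not land where you need it.

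Realize $L_i$ as the $\gl_{k_i}$-highest weight space of weight $\la^{(i)}$ in $S(\C^{m|n}\otimes\C^{k_i})$. Then $E_{a,b}(z)=\sum_i E^{(i)}_{a,b}/(z-z_i)$ is a sum of polarization operators over column blocks, weighted by $1/(z-z_i)$. Under the $(\gl_{m|n},\gl_k)$ exchange, with $k=\sum_i k_i$, evaluation points and twist trade places. The $z_i$ therefore become the \emph{twist} $\mathrm{diag}(z_1 I_{k_1},\ldots,z_\ell I_{k_\ell})$ of a $\gl_k$ model. Since the $\gl_{m|n}$ model here is untwisted, all marked points on the $\gl_k$ side coincide.

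The dual object is thus a quasi-periodic $\gl_k$ Gaudin model with a single marked point and non-regular twist, i.e.\ a quantum shift-of-argument subalgebra of $U(\gl_k)$. It acts on the space of $\gl_{k_1}\oplus\cdots\oplus\gl_{k_\ell}$-highest weight vectors of weight $(\la^{(1)},\ldots,\la^{(\ell)})$ in an irreducible $\gl_k$-module. That is a branching space, not a singular weight space of a tensor product at distinct points. The cited results \cite{MTV09-1, MTV09-2, Ry20} concern untwisted Gaudin algebras on the latter, so they do not apply directly and nothing transfers. Besides the super Capelli identity you flag, you would need a second, even duality back to an untwisted tensor-product model.

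Two further points. First, (i) concerns all of $\mL^{\theta\mm\sing}$, so a weight-by-weight identification must be supplemented by disjointness of the joint spectra for different $\theta$-singular weights. Second, your symmetry argument in (ii) is not valid as stated; you also need it for self-adjointness at real $\z$. The transpose anti-involution reverses products and, to act on pseudo-differential operators, must send $\pz$ to $-\pz$. It therefore turns the column-Manin matrix $\cL^\z$ into a row-Manin one. This is not a reindexing of $\Is$, so \propref{HM} and \propref{Ber-sig-L} do not give the invariance by themselves.

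The missing idea is the one behind the cited result, whose mechanism the paper displays in Section~6: super duality via truncation functors rather than Howe duality. One realizes $\mL$ and a tensor product $\mV$ of finite-dimensional $\gl_{r'}$-modules ($r'\gg 0$), with the \emph{same} points $z_1,\ldots,z_\ell$, as two truncations $\otr$ of a single tensor product of polynomial modules over a larger general linear superalgebra. The odd-reflection isomorphism of \cite[Proposition 3.14]{ChL25} identifies $\mL^{\theta\mm\sing}_\mu$ with an ordinary singular weight space $\mV^{\sing}_\nu$. The Berezinian truncation identity (\cite[Proposition 3.16]{ChL25}, the polynomial analogue of \propref{trunc}(iii),(iv); compare \propref{Bethe-gl-eigen}) shows that on corresponding vectors $\Bers(\cL^\z)$ acts as $\cdet(\cL^\z_{(0,0|r',0)})$ times a fixed power of $\pz$. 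Hence the two Gaudin algebras have the same image. Cyclicity and simple spectrum are then inherited from the untwisted $\gl_{r'}$ results, and a symmetric nondegenerate form can be transported from that side. No Capelli-type identity is needed.
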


\begin{cor} \label{ChL25-poly-weight}

For any $\z \in \Xl$ and any $\theta$-singular weight $\mu$ of $\mL$, we have:

\begin{enumerate}[\normalfont(i)]
\item $\mL_\mu^{\theta\mm\sing}$ is a cyclic $\Az_\mn$-module.

\item $(\Az_\mn)_{\mL_\mu^{\theta\mm\sing}}$ is a Frobenius algebra.

\item $(\Az_\mn)_{\mL_\mu^{\theta\mm\sing}}$ is diagonalizable with a simple spectrum for generic $\z \in \Xl$.

\end{enumerate}

\end{cor}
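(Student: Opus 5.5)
The corollary should follow from \thmref{ChL25} by restricting to a single $\theta$-singular weight space. The key observation is that $\Az_\mn$ commutes with the diagonal action of $\mg_\mn$, and in particular with the Cartan subalgebra $\mh_\mn$. Hence $\Az_\mn$ preserves the weight decomposition
$$\mL^{\theta\mm\sing}=\bigoplus_\mu \mL^{\theta\mm\sing}_\mu,$$
and the spectral projection $\pi_\mu$ onto $\mL^{\theta\mm\sing}_\mu$ commutes with $\Az_\mn$. All spaces involved are finite-dimensional, since each $L_i$ is a finite-dimensional irreducible polynomial module.

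\emph{Part (i).} Let $v$ be a cyclic vector for $\mL^{\theta\mm\sing}$, as given by \thmref{ChL25}(i). I claim $\pi_\mu v$ is cyclic for $\mL^{\theta\mm\sing}_\mu$. Take any $w\in\mL^{\theta\mm\sing}_\mu$ and choose $a\in\Az_\mn$ with $av=w$. Then
$$w=\pi_\mu w=\pi_\mu a v=a\,\pi_\mu v,$$
so every $w$ lies in $\Az_\mn\,\pi_\mu v$.

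\emph{Part (ii).} By \thmref{ChL25}(ii), the restriction of the bilinear form to $\mL^{\theta\mm\sing}_\mu$ is nondegenerate and symmetric. Since $\Az_\mn$ preserves $\mL^{\theta\mm\sing}_\mu$, it remains symmetric with respect to the restricted form. Combining this with part (i), \lemref{cyc-Frob} applies directly and shows that $(\Az_\mn)_{\mL^{\theta\mm\sing}_\mu}$ is a Frobenius algebra.

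\emph{Part (iii).} For $\z$ generic, \thmref{ChL25}(iv) gives that $(\Az_\mn)_{\mL^{\theta\mm\sing}}$ is diagonalizable with simple spectrum. Since $\Az_\mn$ commutes with $\mh_\mn$, one can choose an eigenbasis compatible with the weight decomposition. Restricting to the invariant subspace $\mL^{\theta\mm\sing}_\mu$ keeps each operator diagonalizable. Simplicity of spectrum is also inherited: distinct eigenvectors in $\mL^{\theta\mm\sing}_\mu$ have distinct joint eigenvalues for $\Az_\mn$ on the whole space, so their eigencharacters on the restriction remain distinct.

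The only step requiring any care is the cyclicity in part (i), and it is handled by the projection argument above. Everything else follows formally from \thmref{ChL25} once $\Az_\mn$ is known to preserve weight spaces.
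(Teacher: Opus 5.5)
Your proof is correct and follows the paper's own argument. Because $\Az_\mn$ preserves each $\theta$-singular weight space, parts (i) and (iii) come from \thmref{ChL25}(i) and (iv), and part (ii) comes from the nondegeneracy clause of \thmref{ChL25}(ii) together with \lemref{cyc-Frob}. You additionally spell out the projection argument for cyclicity and why restriction preserves diagonalizability with simple spectrum, details the paper leaves implicit.
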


\begin{proof}
  Let $\mu$ be a $\theta$-singular weight of $\mL$.
	Since $\Az_\mn$ preserves every $\theta$-singular weight space, (i) and (iii) follow from (i) and (iv) of \thmref{ChL25}.
  Moreover, (ii) follows from \thmref{ChL25}(ii) and \lemref{cyc-Frob}.
\end{proof}

Analogous to \propref{A-iso}, we have the following.

\begin{prop} \label{A-iso-poly}

For any $\z \in \Xl$ and any singular weight $\xi$ of $\mL$, the spaces $\mL_{\xi}^{\sing}$ and $\mL_{\xi^\theta}^{\theta \mm\sing}$ are $\Az_\mn$-modules.
Moreover, there is an element $X \in \mg_\mn$, which is a product of elements in $\setc*{E_{i,j} }{  i,j \in \Js, \, i+j \notin \Z}$, such that the map
$$
\vartheta: \mL_{\xi^\theta}^{\theta \mm\sing} \longrightarrow \mL_{\xi}^{\sing},
$$
defined by $\vartheta(v)=X v$ for $v \in  \mL_{\xi^\theta}^{\theta \mm\sing}$, is an $\Az_\mn$-isomorphism
\end{prop}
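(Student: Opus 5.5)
The plan is to mirror the proof of \propref{A-iso}, using odd reflections inside $\mg_\mn$ itself (no central extension is needed here, since $\mg_\mn=\mg_{(0,n|m,0)}$ has $\Ipq$-part consisting only of the odd indices $-n+\hf,\ldots,-\hf$ and the cocycle plays no role in the argument).

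First, $\mL_\xi^\sing$ and $\mL_{\xi^\theta}^{\theta\mm\sing}$ are $\Az_\mn$-modules. The algebra $\Az_\mn$ is generated by coefficients of the $b_i^\z$, which lie in $U(\mg_\mn)^{\otimes\ell}$ and commute with the diagonal action of $\mg_\mn$ (by \cite[Corollary 3.6]{MR}, as recalled in Section~\ref{Gaudin-alg}). Hence $\Az_\mn$ preserves weight spaces and the kernel of every diagonal $E_{i,j}$. In particular it preserves the singular spaces with respect to both $\mb_\mn$ and $\mb_\mn^\theta$.

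Second, I construct $X$. The Borel subalgebras $\mb_\mn$ and $\mb_\mn^\theta$ share $\mh_\mn$ and have the same even part, because $\theta$ preserves the relative orders of the even indices $1,\ldots,m$ and of the odd indices. Therefore $\mb_\mn^\theta$ is obtained from $\mb_\mn$ by a finite sequence of odd reflections, each interchanging an adjacent pair consisting of one odd and one even index. Concretely, I move each $-\hf,\ldots,-n+\hf$ past $1,\ldots,m$ in turn. For a single odd reflection with respect to an odd simple root $\al=\ep_i-\ep_j$, the standard lemma (\cite[Section 3.1]{CL10}, \cite[Section 6.3]{CW}) applies. If $v$ is singular of weight $\nu$ for the old Borel, then either $\nu(h_\al)\neq 0$, in which case $E_{j,i}v$ is singular for the new Borel of weight $\nu-\al$, or $\nu(h_\al)=0$, in which case $v$ itself is singular for the new Borel of weight $\nu$. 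In either case the map is injective on the singular weight space. I take $X$ to be the ordered product of the chosen factors $E_{j,i}$ (the identity at steps of the second kind). Each factor has $i+j\notin\Z$, and the weights transform to $\xi^\theta$ exactly as in the case where $L(\mg_\mn,\xi)$ is defined via \cite[Theorem 2.55]{CW}. Since $\theta$ is symmetric, I could instead go from $\mb^\theta_\mn$ to $\mb_\mn$ by the reverse sequence, so that $X v$ lands in $\mL_\xi^\sing$. This direction is the one the statement needs, and the roots reflected are now the negatives.

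Third, I prove that $\vartheta$ is an isomorphism. The main obstacle is injectivity and surjectivity for each step on the whole tensor product, where the weight spaces are finite-dimensional but not of irreducible type. The key fact is this. If $\nu(h_\al)\neq0$, then $E_{i,j}E_{j,i}v=[E_{i,j},E_{j,i}]v=\nu(h_\al)v$ for $v$ singular for the old Borel. So $E_{j,i}$ is injective from old-singular weight-$\nu$ vectors to new-singular weight-$(\nu-\al)$ vectors, and $E_{i,j}$ gives an injection back, up to scalar. Since both spaces are finite-dimensional, each step is a bijection, and the composite $\vartheta$ is bijective. Because $X\in U(\mg_\mn)$ and $\Az_\mn$ commutes with the diagonal $\mg_\mn$-action, $\vartheta$ is $\Az_\mn$-linear. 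Finally, I will check that the weights match at every step, i.e. that the step case is determined by the weight alone and does not depend on the vector. This holds because the $\theta$-singular weights of $\mL$ are of the form $\ovla^\mn$ with $\la\in\Pmn$ by \propref{Cmn}.
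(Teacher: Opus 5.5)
Your route is the one the paper intends. The paper gives no separate argument and says only that the result is analogous to \propref{A-iso}: an odd-reflection isomorphism of singular spaces as in \propref{Lth}, combined with the fact that $\Az_\mn$ consists of even elements commuting with the diagonal $\mg_\mn$-action. Your treatment of the case $\nu(h_\al)\neq 0$ is fine. On old-singular vectors of weight $\nu$ you have $E_{i,j}E_{j,i}=\nu(h_\al)$, and on new-singular vectors of weight $\nu-\al$ you have $E_{j,i}E_{i,j}=\nu(h_\al)$. So the two maps are mutually inverse up to a nonzero scalar, and that step is a bijection.

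The gap is the case $\nu(h_\al)=0$. You assert that an old-singular vector $v$ of weight $\nu$ is then automatically new-singular, but this requires $E_{j,i}v=0$. The odd-reflection computation only shows that $E_{j,i}v$ is singular for both Borels, not that it vanishes. Vanishing holds for the highest weight vector of an irreducible module, but fails in general: in a $\gl_{1|1}$ Verma module whose highest weight $\nu$ has $\nu(h_\al)=0$, one has $E_{j,i}v\neq 0$. Here $\mL$ is not irreducible, so this needs an argument.

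For surjectivity of that step you also need the reverse inclusion: every new-singular $w$ of weight $\nu$ satisfies $E_{i,j}w=0$. Your Step 3 does not treat this case at all. Your closing remark, that the type of each step is determined by the weight, is automatic because the whole space has a single weight, and it is not the fact that is needed.

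The fix is short. One option is to use complete reducibility of $\mL$ (\propref{Cmn}) to reduce to highest weight vectors of irreducible summands. More directly, all weights of $\mL$ lie in $\Xi_\mn$ and $h_\al=E_{i,i}+E_{j,j}$, so $\nu(h_\al)=0$ forces $\nu(E_{i,i})=\nu(E_{j,j})=0$. Then $E_{j,i}v$ and $E_{i,j}w$ would have weights with a coordinate equal to $-1$, hence both vanish. With this added, each step is an isomorphism and your argument is complete.
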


By applying \propref{A-iso-poly} to \corref{ChL25-poly-weight}, we obtain the following.

\begin{cor} \label{ChL25-weight}

For any $\z \in \Xl$ and any singular weight $\xi$ of $\mL$, we have:

\begin{enumerate}[\normalfont(i)]
\item $\mL_\xi^{\sing}$ is a cyclic $\Az_\mn$-module.

\item $(\Az_\mn)_{\mL_\xi^{\sing}}$ is a Frobenius algebra.

\item $(\Az_\mn)_{\mL_\xi^{\sing}}$ is diagonalizable with a simple spectrum for generic $\z \in \Xl$.

\end{enumerate}

\end{cor}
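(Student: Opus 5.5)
The plan is to transport each assertion of \corref{ChL25-poly-weight} along the $\Az_\mn$-isomorphism $\vartheta$ of \propref{A-iso-poly}. Fix $\z\in\Xl$ and a singular weight $\xi$ of $\mL$. The map $\vartheta$ sends $\mL_{\xi^\theta}^{\theta\mm\sing}$ onto $\mL_\xi^{\sing}$ and is bijective. In particular $\mL_{\xi^\theta}^{\theta\mm\sing}\neq 0$, so $\xi^\theta$ is a $\theta$-singular weight of $\mL$ and \corref{ChL25-poly-weight} applies to $\mu:=\xi^\theta$.

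Since $\vartheta$ is an isomorphism of $\Az_\mn$-modules, statements (i)--(iii) transfer directly.

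\begin{enumerate}[\normalfont(i)]
\item If $v$ is a cyclic vector of $\mL_{\mu}^{\theta\mm\sing}$, then $\Az_\mn\,\vartheta(v)=\vartheta(\Az_\mn v)=\mL_\xi^{\sing}$. Hence $\vartheta(v)$ is a cyclic vector of $\mL_\xi^{\sing}$.
\item Conjugation by $\vartheta$, namely $a\mapsto \vartheta\circ a\circ\vartheta^{-1}$, is an algebra isomorphism $\End(\mL_{\mu}^{\theta\mm\sing})\to\End(\mL_\xi^{\sing})$. It carries the image of each element of $\Az_\mn$ to the image of the same element, because $\vartheta$ intertwines the two actions. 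It therefore restricts to an algebra isomorphism $(\Az_\mn)_{\mL_{\mu}^{\theta\mm\sing}}\cong(\Az_\mn)_{\mL_\xi^{\sing}}$. Being a Frobenius algebra is invariant under isomorphism: transport the nondegenerate invariant form.
\item The same algebra isomorphism, realized by conjugation by $\vartheta$, preserves both diagonalizability and simplicity of the spectrum. Indeed, $\vartheta$ maps an eigenbasis to an eigenbasis with identical eigenvalue characters $\Az_\mn\to\C$. So the generic-$\z$ statement transfers verbatim. The relevant generic set is the one from \corref{ChL25-poly-weight}(iii) for $\mu=\xi^\theta$.
\end{enumerate}

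The only point needing care is confirming that $\xi^\theta$ really is a $\theta$-singular weight, so that the corollary applies. As noted above, this follows at once from the bijectivity of $\vartheta$ and $\mL_\xi^{\sing}\neq0$.

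There is one subtlety in (iii): the exceptional set of $\z$ in \corref{ChL25-poly-weight} depends on $\mu$. This causes no trouble, since $\xi$ and hence $\mu$ are fixed before $\z$ varies, and $\mu=\xi^\theta$ depends only on $\xi$, not on $\z$. There is no real obstacle; the proof is a formal transfer of structure.
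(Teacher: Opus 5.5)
Your proposal is correct and follows the paper's own argument: the paper obtains this corollary by transporting \corref{ChL25-poly-weight}, applied to the $\theta$-singular weight $\xi^\theta$, along the $\Az_\mn$-isomorphism $\vartheta$ of \propref{A-iso-poly}, which is exactly what you spell out. The paper leaves implicit two points you make explicit, namely that $\xi^\theta$ is indeed $\theta$-singular and that the generic set of $\z$ transfers unchanged.
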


\subsection{Tensor products of unitarizable modules} \label{uni}

Recall that $\mg:=\gs$ and $\og:=\ogs$.
For each $i=1, \ldots, \ell$, let $L_i=L(\mg, \xi_i)$ for some $\xi_i \in {\mf X}^{d_i}_\pdn$ (see \eqref{un-weight}), where $d_i \in \N$, and let
$$
\mL =L_1 \otimes \cdots \otimes  L_\ell.
$$
Recall the isomorphism $\iota: \mg\oplus \C K \longrightarrow \og$ defined by \eqref{iso-e}.
By \propref{CLZ} and \propref{uni-K}, $L_1,\ldots, L_\ell, \mL$ are all unitarizable, and $L_i=L(\og, \ov\xi_i)^\iota$ for some $\ov\xi_i \in \ov{\mf X}^d_\pdn$.

 For each $i=1, \ldots, \ell$, let $\ov L_i=L(\og, \ov\xi_i)$ and
 $$
\omL =\ov L_1 \otimes \cdots \otimes  \ov L_\ell.
$$
Then $\omL\in \Os$ and $\mL=\omL^\iota$.

The space $\mL$ decomposes into a direct sum of unitarizable highest weight $\mg$-modules.
More precisely,
$$
\mL= \bigoplus_{\xi} L(\mg, \xi)^{\oplus m_\xi}.
$$
The direct sum is taken over all $\xi  \in {\mf X}^{d}_\pdn$, where $d:=\sum_{i=1}^\ell d_i$, and $m_\xi \in \Zp$ are the multiplicities of $L(\mg, \xi)$ in the decomposition (see \cite[Section 6]{CLZ}).
By the discussion preceding \propref{uni-K}, we see that $\omL= \bigoplus_{\ov\xi}L(\og, \ov\xi)^{\oplus m_\xi}$ as $\og$-modules, where the direct sum is taken over all $\ov\xi  \in \ov {\mf X}^{d}_\pdn$.
Note that $L(\mg, \xi)=L(\og, \ov\xi)^\iota$ for all such $\ov\xi$.

We now state the main result of the paper, which will be proved in \secref{pf}.

\begin{thm}\label{Main}

For any $\z \in \Xl$ and any singular weight $\xi$ of $\mL$, we have:

\begin{enumerate}[\normalfont(i)]
\item $\mL_\xi^{\sing}$ is a cyclic $\cA_\pdn^\z$-module.

\item $(\cA_\pdn^\z)_{\mL_\xi^{\sing}}$ is a Frobenius algebra.

\item $(\cA_\pdn^\z)_{\mL_\xi^{\sing}}$ is diagonalizable with a simple spectrum for generic $\z \in \Xl$.

\end{enumerate}

\end{thm}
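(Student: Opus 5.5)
The plan is to reduce every assertion of \thmref{Main} to \corref{ChL25-weight} (or \corref{ChL25-poly-weight}) for a tensor product of irreducible polynomial modules. The reduction is a chain of $\cA^\z$-equivariant linear isomorphisms built from the central extension $\iota$, odd reflections (\propref{A-iso}) and truncation functors (\propref{trun-sep}). Cyclicity and simple spectrum are evidently transported along such isomorphisms. The Frobenius property is transported too, because the images of the two Gaudin algebras in $\End$ correspond once the generators match. So the whole proof amounts to building this chain and checking that each link intertwines the relevant Gaudin algebras.

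\emph{Step 1 (pass to $\og$).} We have $\mL=\omL^\iota$ with $\omL\in\Os$, and $\mL^\sing_\xi=\omL^\sing_{\ov\xi}$ as vector spaces. The action of $\cA^\z_\pdn$ on it is given by the coefficients of $\Bers(\ov\cL^\z)$. In $\ov\cL^\z$ the extra term $\de_{i,j}\{i\}K(z)$ acts on $\omL$ by the scalar function $\sum_k d_k/(z-z_k)$. So on $\omL$ the matrix is $\big[\de_{i,j}\pz-(-1)^{2i}\oE_{i,j}(z)+\de_{i,j}\{i\}c(z)\big]$ with $c(z)$ scalar.

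\emph{Step 2 (change Borel).} Apply \propref{A-iso} with $\sig=\sig_{(q',m')}$ from \exref{ex}, taking $q'=0$ and $m'=0$ (or general small values), to get an $\cA^\z$-isomorphism $\omL^\sing_{\ov\xi}\cong\omL^{\sig\mm\sing}_{\ov\xi^\sig}$. By \propref{Ber-sig-hL}, on the $\sig$-side the algebra is generated by the coefficients of $\Ber^{\s^\sig}((\ov\cL^\z)^\sig)$. This is a product of principal quasiminors taken in the $\sig$-order.

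\emph{Step 3 (truncate and identify with polynomial modules).} In the $\sig$-order, \exref{ex} shows that each factor $\ov L_k$ has highest weight $\ovla_k^{(p,0|0,n)}$, or more generally its weight lies in a smaller $\Xi_\pdnp$. We then use $\tr_\pdnp$. It is a tensor functor that keeps all weights with vanishing extra coordinates, so by \lemref{weight=0} it does not change the singular weight space in question. This reduces us to the subalgebra indexed by $\{-p,\ldots,-1\}\cup\{\hf,\ldots,n-\hf\}$. On that subalgebra the weights are $-\la^-_k$ on the even indices and $(\la_k^+)'$ on the odd ones.

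Two adjustments then land us on polynomial modules of some $\mg_{m''|n''}$. First, we absorb the scalar shift $c(z)$ on the even block. Second, we compose with the automorphism $E_{i,j}\mapsto -(-1)^{\ldots}E_{j,i}$ of the relevant block, combined with reversing the order. The first adjustment multiplies the Berezinian by $\pz\mapsto \pz+$ scalar-type reparametrizations. The second sends the Manin matrix to a (super)transposed Manin matrix, whose Berezinian generates the same algebra by \propref{HM}. If the mixed signs prevent landing directly in polynomial modules, I would instead enlarge $(p,q|m,n)$ to $(P,Q|M,N)$. I would view $\ov L_k$ as $\tr$ of an irreducible module of the larger algebra, apply the odd reflections there, and truncate on the opposite side so that only $\Xi$-nonnegative coordinates survive. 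The chosen ordering places the discarded indices last. On the weight space where their $\oE_{i,i}$ vanish, their principal quasiminors then act as $\pz$ up to the scalar shift, so the Gaudin algebras restrict compatibly.

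The main obstacle is this last compatibility. We must verify that, on the truncated singular weight space, $\Ber^{\s^\sig}$ of the large matrix equals, up to an invertible scalar pseudo-differential factor and the twist, $\Ber$ of the small matrix. This forces the two Gaudin algebras to have the same image in $\End$. I would prove it by factoring the Berezinian into principal quasiminors in the $\sig$-order. I would then show that the quasiminors for the truncated indices act on vectors of weight zero in those coordinates as $\pz-(\text{scalar})$. This uses that the off-diagonal entries from and into those indices annihilate, or raise out of, the truncated space.
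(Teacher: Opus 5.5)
\textbf{Verdict: there is a genuine gap.} Your primary chain fails, and your fallback, although it is the paper's strategy, leaves the key compatibility step (the paper's \propref{trunc}) with a proof mechanism that does not work for half of the discarded indices.

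\textbf{Why the primary chain (Steps 2--3) fails.} The problem is the truncation. For $\sig=\sig_{(0,0)}$, \exref{ex} gives $\ov\xi^{\sig}$ with $\ep_{-r+\hf}$-coefficient $-\langle(\la^-)'_r-p\rangle$ and $\ep_i$-coefficient $\langle\la^+_i-n\rangle$. These vanish only when $\ell(\la^-)\le p$ and $\la^+_1\le n$. So in general $\ov\xi^{\sig}\notin\Xi_{(p,0|0,n)}$, and $\tr_{(p,0|0,n)}$ kills $\omL^{\sig\mm\sing}_{\ov\xi^{\sig}}$. This already happens in the paper's $\gl_5$ example, where $\ov\xi^{\sig}=\ov\xi$ has $\ep_1$-coefficient $1$. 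Even when the space survives, the truncated factors are still not polynomial. They carry weights $-(\la^-_{(k)})_r-d_k\le 0$ on the even indices and $(\la^+_{(k)})'_i\ge 0$ on the odd ones. Your proposed repairs do not fix this:
\begin{enumerate}
\item A (super)transpose automorphism of $\gl_{p|n}$ negates all weights, so the mixed signs persist.
\item \propref{HM} only covers simultaneous permutations $A\mapsto A^{\si}$, not transposition.
\item A shift $c(z)$ sitting on only part of the diagonal is not a reparametrization of $\pz$.
\end{enumerate}

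\textbf{Your fallback is the right idea but misses the ingredients that make it work.}

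First, the enlargement needed is in $q$ and $m$. For $r$ large, $\ov L_k=\tr_\pdn\big(L(\og_{(p,r|r,n)},\ovla_{(k)}^{(p,r|r,n)})\big)$. Odd reflections along $\sig_{(q,m)}$ identify $(\ov\mL^\prime)^{\sing}_{\ovla^{(p,r|r,n)}}$ with $(\ov\mL^\prime)^{\sig\mm\sing}_{\ov\xi}=\omL^{\sing}_{\ov\xi}$, and the former space is what $\tr_{(0,r|r,0)}$ sees. The coordinates surviving there are the odd negative ones, and these are $\le 0$ in $\Xi$, not ``$\Xi$-nonnegative''. Polynomiality comes from $\iota$, which shifts these coordinates by $+d_k$ and yields $L(\mg_{(0,r|r,0)},\wt\mu^{r|r}_{(k)})$.

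Second, the compatibility you single out is exactly \propref{trunc}. Your mechanism (put discarded indices last so their quasiminors act as $\pz$) works only for discarded $k>0$. There the rightmost factors $\oE_{j,k}(z)$ push the $k$-coordinate to $-1\notin\Zp$, so they vanish. For $k<0$ the coordinate $-1$ is allowed, so the last quasiminor does not collapse. For example, for $\mg_{(1,0|1,0)}$ in the order $1,-1$ one has $d_{-1}=a_{-1,-1}-a_{-1,1}a_{1,1}^{-1}a_{1,-1}$, and $a_{1,-1}v\neq 0$ in general. This is not a side case: both truncations in the chain discard negative indices, namely $-p,\ldots,-1$ for $\tr_{(0,r|r,0)}$ and $-r+\hf,\ldots,-q-\hf$ for $\tr_\pdn$.

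For such $k$ you have to do three things:
\begin{enumerate}
\item place $k$ first;
\item extract $(\pz+c(z))^{\pm1}$ on the left;
\item show that the remaining quasiminors, which still involve $k$, act on $v$ as those of the smaller matrix.
\end{enumerate}
The third step is the missing idea, \lemref{wta}. Expand $\ocL^{-1}=D^{-1}\sum_r(\mathbf 1-\ocL D^{-1})^r$. Every term whose index path passes through $k$ produces a vector of weight $\ep_k-\ep_j+\mu\notin\Xis$, so it vanishes. Only after this do you get $\Ber(\text{big})\,v=(\pz+c(z))^{a}\,\Ber(\text{small})\,\pz^{b}v$. Since these scalar factors are invertible, the two images in $\End$ coincide, which is what your transport argument needs.
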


\begin{cor} \label{max}
For any $\z \in \Xl$ and any singular weight $\xi$ of $\mL$, the following properties hold:

\begin{enumerate} [\normalfont(i)]

\item The algebra $(\cA_\pdn^\z)_{\mL^\sing_{\xi}}$ is a maximal commutative subalgebra of $\End(\mL^\sing_\xi)$ of dimension $\dim \! \left({\mL^\sing_\xi} \right)$.

\item Every eigenspace of the algebra $(\cA_\pdn^\z)_{\mL^\sing_{\xi}}$ is one-dimensional, and the set of eigenspaces of $(\cA_\pdn^\z)_{\mL^\sing_{\xi}}$ is in one-to-one correspondence with the set of maximal ideals of $(\cA_\pdn^\z)_{\mL^\sing_{\xi}}$.

\item Every generalized eigenspace of $(\cA_\pdn^\z)_{\mL^\sing_{\xi}}$ is a cyclic $\cA_\pdn^\z$-module.

\end{enumerate}
\end{cor}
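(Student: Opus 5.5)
The statement to prove is \corref{max}, and I would derive it purely algebraically from \thmref{Main}. Write $V=\mL^\sing_\xi$ and $\cB=(\cA_\pdn^\z)_V\subseteq \End(V)$. By \thmref{Main}(i), $V$ is cyclic, so $V=\cB v_0$ for some $v_0\in V$. Since $\cB$ is commutative, the map $\cB\to V$, $b\mapsto bv_0$, is injective: if $bv_0=0$ then $b(b'v_0)=b'(bv_0)=0$ for all $b'$, so $b=0$. Hence $\cB\cong V$ as $\cB$-modules, i.e. $V$ is the regular representation of $\cB$, and $\dim\cB=\dim V$. Finite-dimensionality of $V$ comes from $\omL\in\Os$ having finite-dimensional weight spaces.

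For (i), let $T\in\End(V)$ commute with $\cB$. Then $Tv_0=b_0v_0$ for some $b_0\in\cB$, and for every $b\in\cB$,
\[
T(bv_0)=bTv_0=bb_0v_0=b_0(bv_0),
\]
so $T=b_0\in\cB$. Thus $\cB$ equals its own centralizer, which gives maximal commutativity, and the dimension claim was shown above.

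For (ii) and (iii), decompose the finite-dimensional commutative algebra $\cB$ as $\prod_{\mf m}\cB_{\mf m}$ over its maximal ideals. Correspondingly $V\cong\cB=\bigoplus_{\mf m}\cB_{\mf m}$, and the generalized eigenspaces of $\cB$ are exactly the $\cB_{\mf m}$. Each of these is cyclic, generated by the idempotent $e_{\mf m}$, or equivalently by $e_{\mf m}v_0$; this proves (iii). The eigenspace for the character $\cB\to\cB/\mf m=\C$ is the socle $\operatorname{Ann}_{\cB_{\mf m}}(\mf m)$. This is where \thmref{Main}(ii) enters: for a Frobenius algebra each local factor $\cB_{\mf m}$ is Frobenius, hence Gorenstein, so its socle is one-dimensional. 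Distinct maximal ideals give distinct characters, which yields the bijection between eigenspaces and maximal ideals.

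The main obstacle is the one-dimensional socle in (ii). Concretely, if $s\in\cB_{\mf m}$ lies in the socle and is nonzero, then the linear form $\phi(x)=(x,1)$ coming from the nondegenerate invariant form satisfies $\phi(s\,\cB_{\mf m})=\phi(s\C)\neq0$. If the socle contained two independent elements, some nonzero combination of them would be killed by $\phi$, which contradicts nondegeneracy. \thmref{Main}(iii) is not needed for this corollary.
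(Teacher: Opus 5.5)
Your proof is correct and follows the paper's route: the paper deduces the corollary in one line from \thmref{Main} together with \cite[Lemma 1.3]{Lu20}, a general lemma about a finite-dimensional cyclic module over a commutative algebra whose image is Frobenius, and you have proved that lemma directly. Your version also shows that cyclicity alone gives (i) and (iii), that the Frobenius form is needed only for the one-dimensional socles in (ii), and that \thmref{Main}(iii) plays no role.
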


\begin{proof}
This follows from \thmref{Main} and \cite[Lemma 1.3]{Lu20}.
\end{proof}

We will present a procedure for constructing an eigenbasis for $(\cA_\pdn^\z)_{\mL^\sing_{\xi}}$ in \secref{eig}.
A description of the corresponding eigenvalues will also be obtained.

\section{Proof of the main result} \label{pf}

Fix $\s:=(0^p, 1^q, 0^m, 1^n)$. For any sufficiently invertible matrix $A$ of type $\s$, we write
\begin{equation} \label{drop-s}
\Ber(A)=\Bers(A).
\end{equation}
We will recover the superscript $\s$ if necessary.
Let us keep the notations of \secref{uni}, and let $\z \in \Xl$.

\begin{lem}\label{wta}
Write $(\ocLzs)^{-1}=[\tilde{a}_{ij}]_{i,j \in \Is}$, $(\ocLz_{(p-1,q|m,n)})^{-1}=[\tilde{a}^\prime_{ij}]_{i,j \in \Is\backslash\{-p\}}$, and $(\ocLz_{(p,q-1|m,n)})^{-1}=[\tilde{a}^{\pp}_{ij}]_{i,j \in \Is\backslash\{-q+\hf\}}$. Let $v$ be a weight vector of weight $\mu$ in $\omL$.

\begin{enumerate} [\normalfont(i)]

\item If  $\mu \in \Xi_{(p-1,q|m,n)}$, then $\tilde{a}_{ij}v=\tilde{a}^\prime_{ij}v$ for all $i,j \in \Is\backslash\{-p\}$.

\item If  $\mu \in \Xi_{(p,q-1|m,n)}$, then $\tilde{a}_{ij}v=\tilde{a}^\pp_{ij}v$ for all $i,j \in \Is\backslash\{-q+\hf\}$.

\end{enumerate}
\end{lem}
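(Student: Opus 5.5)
We will prove both parts with a block (Schur complement) decomposition of $\ocLzs$ that isolates the distinguished index: $-p$ in (i), $-q+\hf$ in (ii). The plan has three steps: split off that index, write the inverse via the Schur complement, and show that the correction term kills $v$ by a weight argument based on \lemref{weight-Xi} and \lemref{weight=0}. We give the details for (i); (ii) is identical with $-p$ replaced by $-q+\hf$.

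\emph{Step 1: block decomposition.} Set $\Is'=\Is\backslash\{-p\}$. Write
$$
\ocLzs=\begin{bmatrix} a & b\\ c & D\end{bmatrix},
$$
where
\begin{itemize}
\item $a=\pz-\oE_{-p,-p}(z)+K(z)$;
\item $b=\big[-\oE_{-p,j}(z)\big]_{j\in\Is'}$ is the row;
\item $c=\big[-(-1)^{2i}\oE_{i,-p}(z)\big]_{i\in \Is'}$ is the column;
\item $D=\big[\de_{i,j}\pz-(-1)^{2i}\oE_{i,j}(z)+\de_{i,j}\{i\}K(z)\big]_{i,j\in\Is'}$.
\end{itemize}
Since the formula defining the matrix only involves the indices themselves, $D$ is exactly $\ocLz_{(p-1,q|m,n)}$, viewed inside $U(\og)^{\otimes\ell}\zpz$ via the embedding $\og_{(p-1,q|m,n)}\subseteq\og$. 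In (ii) the removed index sits in the middle of the order. This does not matter: the Schur complement formulas below are written for an arbitrary index subset, and no reordering is needed.

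\emph{Step 2: Schur complement.} Because $\ocLzs$ is amply invertible, both $D$ and $s:=a-bD^{-1}c$ are invertible. The standard noncommutative block-inverse formula (valid over any associative ring, and checked by direct multiplication) then gives
$$
\big[\tilde a_{ij}\big]_{i,j\in\Is'}=D^{-1}+D^{-1}c\,s^{-1}\,b\,D^{-1}.
$$
Operators act on $v$ from the right-most factor first. So it suffices to prove $bD^{-1}v=0$, i.e. $\sum_{k\in\Is'}\oE_{-p,k}(z)\,\tilde a'_{kj}v=0$ for every $j$.

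\emph{Step 3: the weight argument.} The entries $\tilde a'_{kj}$ are series whose coefficients lie in the subalgebra generated by $U(\og_{(p-1,q|m,n)})^{\otimes\ell}$. Every such element commutes with each $\oE_{-p,-p}^{(r)}$, $r=1,\ldots,\ell$, and $K$ is central.

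Now write $v$ as a sum of pure tensors of weight vectors. By \lemref{weight=0}, applied repeatedly to the $\ell$ factors, each tensor factor has $\ep_{-p}$-coordinate $0$, since $\mu(\oE_{-p,-p})=0$ and all factor weights lie in $\Xis$ by \lemref{weight-Xi}.

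Applying the coefficients of $\tilde a'_{kj}$ preserves this property factorwise, because those coefficients commute with each $\oE^{(r)}_{-p,-p}$. The operator $\oE^{(r)}_{-p,k}$ then raises the $\ep_{-p}$-coordinate of the $r$th factor to $1$. But weights of $\ov L_r\in\Os$ have $\ep_{-p}$-coordinate in $\Zn$ by \lemref{weight-Xi}. Hence every term vanishes, so $bD^{-1}v=0$ and $\tilde a_{ij}v=\tilde a'_{ij}v$.

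For (ii) the same argument applies with $\oE_{-q+\hf,k}(z)$: the $\ep_{-q+\hf}$-coordinates of weights in $\Xis$ are again non-positive.

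\emph{Main obstacle.} The delicate point is making Step 2 rigorous in $U(\og)^{\otimes\ell}\zpz$. One must check that the noncommutative Schur complement identity holds there and that $s$ is genuinely invertible. I would deduce the latter from ample invertibility of $\ocLzs$: the quasideterminant $|\ocLzs|_{-p,-p}$ equals $s$ and is well defined. One must also confirm that the coefficientwise action on $\omL$ of the infinite series involved is well defined, which I expect to follow as in \cite[Section 2.3]{ChL26-1}.
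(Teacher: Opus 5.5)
Your proof is correct, but it is organized differently from the paper's. The paper never inverts a proper submatrix. Writing $\Delta=[\de_{i,j}\pz]_{i,j\in\Is}$ for the scalar diagonal matrix, it expands $\ocL^{-1}=\Delta^{-1}\sum_{r\ge0}(\mathbf{1}-\ocL\Delta^{-1})^r$, and likewise for $\ocL_{(p-1,q|m,n)}$. It then writes each entry of the $r$th power applied to $v$ as a sum over index paths $i=k_1,k_2,\dots,k_{r+1}=j$. Every path through $-p$ contributes zero, because its tail starting at $-p$ would produce a vector of weight $\mu+\ep_{-p}-\ep_j\notin\Xis$, and the surviving paths are exactly those for $\ocL_{(p-1,q|m,n)}$.

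You instead split off the index $-p$, so the whole discrepancy sits in the single term $D^{-1}c\,s^{-1}b\,D^{-1}$ and the weight argument is needed only once, for $bD^{-1}v$. The paper's route gets invertibility and convergence for free, since only the scalar matrix $\Delta$ is ever inverted. Yours trades the path bookkeeping for the invertibility of $D=\ocL_{(p-1,q|m,n)}$ and of $s$.

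The obstacle you flag is not serious. Once $\ocL$ and $D$ are invertible, the block equations of $\ocL\ocL^{-1}=\ocL^{-1}\ocL=\mathbf{1}$ show that $s$ is two-sided invertible with $s^{-1}=\tilde a_{-p,-p}$. You can even avoid $s$ entirely: the $(\Is',\Is')$-block of $\ocL^{-1}\ocL=\mathbf{1}$ reads $[\tilde a_{ij}]_{i,j\in\Is'}=D^{-1}-[\tilde a_{i,-p}]_{i\in\Is'}\,b\,D^{-1}$, which already reduces everything to $bD^{-1}v=0$.

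Finally, the factorwise use of \lemref{weight=0} in your Step 3 is valid but unnecessary. The coefficients of $\tilde a'_{kj}$ have weight $\ep_k-\ep_j$ under the adjoint action, so $\oE_{-p,k}(z)\tilde a'_{kj}v$ has weight $\mu+\ep_{-p}-\ep_j\notin\Xis$. It therefore vanishes by \lemref{weight-Xi} applied to $\omL\in\Os$, which is exactly the vanishing principle the paper uses.
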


\begin{proof}

We only prove (i). The proof of (ii) is similar. 
For simplicity, we suppress $\z$.
Let $\ocL=\ocLs$ and $\ocL^\prime=\ocL_{(p-1,q|m,n)}$ .
We have $\ocL=[a_{ij}]_{i,j \in \Is}$, where
$$
a_{ij}=\de_{i,j}\pz-(-1)^{2i} \oE_{i,j}(z) +\de_{i,j}\{i\} K(z).
$$
It is straightforward to verify that
\begin{equation} \label{L-inv}
\ocL^{-1}=D^{-1} \sum_{r=0}^\infty \left( {\bf 1}   -\ocL D^{-1} \right)^r.
\end{equation}
where $\bf 1$ is the $(p+q+m+n) \times (p+q+m+n)$ identity matrix, and $D:=[\de_{i,j}\pz]_{i,j \in \Is}$.

We regard $U(\og)^{\otimes \ell} \zpz$ as a $\og$-module via the adjoint action.
For any $i, j \in \Is$, $a_{ij}$ is of weight $\ep_i-\ep_j$.
By \eqref{L-inv}, $\ta_{ij}$ is also of weight $\ep_i-\ep_j$.
On the other hand, we regard $\omL \zpz$ as a $\og$-module in the obvious way.
Any weight vector in $\omL \zpz$ has weight contained in $\Xis$.

Obviously, $\ocL^\prime=[a_{ij}]_{i,j \in \Is\backslash\{-p\}}$.
Analogous to \eqref{L-inv}, we have
\begin{equation} \label{L-inv-prime}
(\ocL^\prime)^{-1}= (D^\prime)^{-1} \sum_{r=0}^\infty \left( {\bf 1}^\prime   -\ocL^\prime (D^\prime)^{-1} \right)^r,
\end{equation}
where $\bf 1^\prime$ is the $(p+q+m+n-1) \times (p+q+m+n-1)$ identity matrix, and $D^\prime:=[\de_{i,j}\pz]_{i,j \in  \Is\backslash\{-p\}}$. 
For $r \in \Zp$, write
$$
\left( {\bf 1}   -\ocL D^{-1} \right)^r=[b_{i,j}]_{i,j \in \Is} \qquad \text{and} \qquad \left( {\bf 1}^\prime   -\ocL^\prime (D^\prime)^{-1} \right)^r=[b^\prime_{i,j}]_{i,j \in \Is\backslash\{-p\}}.
$$
We claim that $b_{i,j} v=b^\prime_{i,j} v$ for any $r \in \Zp$ and $i,j \in \Is\backslash\{-p\}$. 
This proves (i) by \eqref{L-inv} and \eqref{L-inv-prime}.

We are left to prove the claim.
It is clear if $r=0$. 
Suppose now that $r \in \N$.
Let
$$
c_{ij}=\de_{i,j}-a_{i,j} \pz^{-1} \qquad \mbox{for $\, i,j \in \Is$}.
$$
Fix $i,j \in \Is\backslash\{-p\}$. The vector $b_{i,j} v$ is the sum of the vectors
\begin{equation} \label{a-vec}
c_{k_1,k_2} c_{k_2,k_3} \ldots c_{k_r,k_{r+1}}v,
\end{equation}
where $k_1=i$, $k_{r+1}=j$, and $k_2, \ldots, k_r$ run over all elements of $\Is$.
Similarly, the vector $b^\prime_{i,j} v$ is the sum of the vectors in \eqref{a-vec}, but with $k_2, \ldots, k_r$ running over all elements of $\Is\backslash\{-p\}$.
Let $w$ be the vector in \eqref{a-vec} such that $k_s=-p$ for some $s=2, \ldots, r$.
Since the vector $c_{k_s,k_{s+1}} c_{k_{s+1},k_{s+2}} \ldots c_{k_r,k_{r+1}}v$ is zero, for otherwise it would have weight $(\ep_{-p}-\ep_j)+\mu\notin\Xis$,
we have $w=0$.
This implies that $b_{i,j} v=b^\prime_{i,j} v$, as claimed.
\end{proof}

The following proposition is crucial to proving \thmref{Main}.

\begin{prop} \label{trunc}
Let $c(z)=\sum_{i=1}^\ell d_i (z-z_i)^{-1}$, and let $p', q', m', n' \in \Z_+$ satisfying $p' \le p$, $q' \le q$, $m' \le m$, and $n' \le n$.
Let $v$ be a weight vector of weight $\mu$ in $\omL$.

\begin{enumerate} [\normalfont(i)]

\item   If $\mu \in \Xi_{(p',q|m,n)}$, then
$$
\Ber \big(\cLzs\big) v= \big(\pz+c(z) \big)^{p-p'} \, \Ber \big(\cLz_{(p',q|m,n)} \big)  v.
$$

\item If  $\mu \in \Xi_{(p,q'|m,n)}$, then
$$
\Ber \big(\cLzs\big) v=  \big(\pz+c(z) \big)^{q'-q} \, \Ber \big(\cLz_{(p,q'|m,n)} \big)  v.
$$

\item If  $\mu \in \Xi_{(p,q|m',n)}$, then
$$
\Ber \big(\cLzs\big) v=  \Ber \big(\cLz_{(p,q|m',n)} \big) \pz^{m-m'} v.
$$

\item If  $\mu \in \Xi_{(p,q|m,n')}$, then
$$
\Ber \big(\cLzs\big) v=   \Ber \big(\cLz_{(p,q|m,n')} \big) \pz^{n'-n} v.
$$

\end{enumerate}

\end{prop}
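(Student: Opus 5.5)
The plan is to reduce each of (i)--(iv) to removing a single index, and then induct on $p-p'$, $q-q'$, $m-m'$ and $n-n'$. In each step I reorder $\Is$ with \propref{Ber-sig-L} so that the index being removed comes first or last, and compare principal quasiminors using \lemref{wta}.

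First pass to $\og$. Since $\uiot(\Ber(\cLzs))=\Ber(\ocLzs)$ and $\mL=\omL^\iota$, it suffices to prove the identities with $\ov\cL^\z$ acting on $v\in\omL$. On $\omL$, $K$ acts on the $i$th factor by $d_i$, so $K(z)$ acts by $c(z)$. Because the coefficients of $v$ are constants, $\pz$ commutes with the action on $v$.

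\emph{Cases (i) and (ii).} Take $\si\in\fS_\Is$ that moves the removed index ($-p$, or $-q+\hf$) to the front and leaves the order of the rest unchanged. By \propref{Ber-sig-L},
\[
\Ber(\ov\cL^\z)=d_{-p}\cdot d_{k_2}\cdots d_{k_r},
\]
with $d_{-p}=a_{-p,-p}$ raised to the exponent $\hs=+1$; for $-q+\hf$ the exponent is $-1$.

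\begin{enumerate}[\normalfont(a)]
\item Each remaining quasiminor $d_k$ is the inverse of a diagonal entry of the inverse of a standard submatrix containing $-p$. The argument of \lemref{wta} applies verbatim to standard submatrices. So on vectors whose weight lies in $\Xi_{(p-1,q|m,n)}$, each $d_k$ agrees with the corresponding quasiminor of $\ov\cL^\z_{(p-1,q|m,n)}$ taken in the same order.
\item For inverses: if $x$ and $y$ act invertibly on the weight space and agree there, then $x^{-1}$ and $y^{-1}$ agree there as well.
\item The $d_k$ have weight $0$, so every intermediate vector $d_{k_j}\cdots d_{k_r}v$ stays in weight $\mu$. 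This lets us apply (a) and (b) successively. Undoing $\si$ by \propref{HM} identifies the product $d_{k_2}\cdots d_{k_r}$ with $\Ber(\ov\cL^\z_{(p-1,q|m,n)})$.
\item Finally $a_{-p,-p}=\pz-\oE_{-p,-p}(z)+K(z)$ acts on a vector of weight $\mu$ with $\mu(\oE_{-p,-p})=0$ as $\pz+c(z)$. This gives one factor $(\pz+c(z))^{\pm1}$ on the left.
\end{enumerate}

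\emph{Cases (iii) and (iv).} Now move the removed index ($m$, or $n-\hf$) to the end. The last factor $d_m=\ta_{m,m}^{-1}$ then acts on $v$ first. Expand $\ta_{m,m}v$ via \eqref{L-inv} as a sum over paths, as in the proof of \lemref{wta}. Any path that leaves $m$ passes through a factor $c_{k,m}$ with $k\ne m$. The corresponding vector would have weight $\ep_k-\ep_m+\mu$, whose $\ep_m$-coefficient is $-1\notin\Zp$, so it is not in $\Xis$ and the vector is $0$. Only the diagonal path survives. Since $a_{m,m}v=\pz v$ ($\oE_{m,m}$ kills $v$ and $\{m\}=0$), this gives
\[
\ta_{m,m}v=\pz^{-1}\sum_r\bigl(1-\pz\pz^{-1}\bigr)^r v=\pz^{-1}v,
\]
hence $d_m^{\pm1}v=\pz^{\pm1}v$. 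The vector $\pz v$ has the same weight $\mu$, so the remaining quasiminors are handled exactly as in (a)--(c). This puts $\pz^{m-m'}$, or $\pz^{n'-n}$, on the right.

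I expect the main obstacle to be the bookkeeping in step (a): checking carefully that \lemref{wta} extends to every standard submatrix and to the inverses of quasiminors. This requires keeping track of weights through the noncommuting pseudo-differential expansions, and of the order of factors, so that the $\pz+c(z)$ factors land on the left and the $\pz$ factors on the right, as stated.
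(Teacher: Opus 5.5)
Your proof is correct, but it takes a different route from the paper's.

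\textbf{The paper's route.} The paper reorders $\Is$ so that all even indices precede all odd ones and then uses Molev--Ragoucy's explicit formulas.
\begin{enumerate}[\normalfont(i)]
\item It writes $\Ber(\ocL)$ as the column determinant of the even block of $\ocL$ times a determinant-type expression in the odd block of $\ocL^{-1}$. It kills the permutations with $\si(-p)\neq -p$ by a weight argument, pulls out $a_{-p,-p}$, and applies \lemref{wta} to the full inverse only.
\item It puts $-q+\hf$ last, uses Molev--Ragoucy's formula for $\Ber(\ocL)^{-1}$, obtains $X^{-1}w=Y^{-1}\gam w$ on $\omL_\mu$, and then inverts by a formal series manipulation.
\end{enumerate}
Cases (iii) and (iv) are referred to Proposition 3.16 of ChL25.

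\textbf{Your route and what it buys.} You work directly with the quasideterminant definition $\Ber^{\s}=\prod_k d_k^{\hs_k}$, moving only the removed index to the front or back. For $-p$ and $n-\hf$ no reordering is even needed. This treats all four cases in one way, needs no determinant formulas, and makes (iii) and (iv) self-contained. Your observation (b) is the clean form of the paper's inversion step in (ii): invertible weight-zero operators that agree on $\omL_\mu$, hence on $\omL_\mu\zpz$ by right $\C\zpz$-linearity, have agreeing inverses. The price is that in (i) and (ii) you need \lemref{wta} for every leading standard submatrix containing the removed index $i_0$, not just for the full matrix. As you say, the path argument goes through verbatim: a tail starting at $i_0$ and ending at $k\neq i_0$ has weight $\ep_{i_0}-\ep_k+\mu\notin\Xis$.

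\textbf{Two small remarks.}
\begin{enumerate}[\normalfont(a)]
\item In (iii) and (iv), the leading submatrices before the last index do not contain it. So the remaining quasiminors coincide, as algebra elements, with those of $\ov\cL^\z_{(p,q|m-1,n)}$ (resp.\ $\ov\cL^\z_{(p,q|m,n-1)}$) in their standard order. No appeal to steps (a)--(c) is needed there.
\item To conclude that $\oE_{-p,-p}(z)$ (resp.\ $\oE_{m,m}(z)$) acts by zero on $\omL_\mu\zpz$, it is not enough that $\mu(\oE_{-p,-p})=0$, because $\oE_{-p,-p}(z)=\sum_k \oE^{(k)}_{-p,-p}(z-z_k)^{-1}$. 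You need each tensor factor of a weight vector to have zero $\ep_{-p}$-component, which is \lemref{weight=0}. The paper invokes this lemma at exactly this point.
\end{enumerate}
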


\begin{proof}
We suppress $\z$ and continue with the notation of \lemref{wta} and its proof. 
Let $\cL=\cLs$ and $\ocL=\ocLs$.

It is enough to prove (i) for the case $p'=p-1$ (as the general case will follow by induction).
Let $\mu \in \Xi_{(p-1,q|m,n)}$.
Choose $\pi \in \fS_\Is$ such that $\Is^\pi$ has the total order
\begin{eqnarray*}
& \hspace{-6cm} -p\lepi \cdots\lepi -1\lepi 1\lepi \cdots \lepi m \\
&  \lepi -q+\hf\lepi\cdots\lepi -\hf \lepi \hf \lepi\cdots\lepi n-\hf.
\end{eqnarray*}
Then $\ocL^\pi$ is a matrix of type $\s^\pi:=(0^{p+m}, 1^{q+n})$.
By \propref{Ber-sig-hL}, $\Ber \big(\ocL\big)=\Ber^{\s^\pi} \big(\ocL^\pi \big)$. 
Clearly, $(\ocL^\pi)^{-1}=(\ocL^{-1})^\pi$.
Using the formula for $\Ber^{\s^\pi} \big(\ocL^\pi \big)$ in \cite[Definition 2.6]{MR},
we see that $\Ber \big(\ocL\big)$ equals
\begin{eqnarray}
& \hspace{-4cm} \Big(\sum_{\si \in \fS_{\Gamma}} (-1)^{l(\si)}  a_{\si(-p),-p}\cdots a_{\si(-1),-1}a_{\si(1),1} \cdots a_{\si(m),m} \Big)  \label{MoDefn-1} \\
&  \cdot \Big(\sum_{\tau \in \fS_{\ov \Gamma}} (-1)^{l(\tau)} \ta_{-q+\hf,\tau(-q+\hf)}\cdots \ta_{-\hf,\tau(-\hf)} \ta_{\hf,\tau(\hf)} \cdots \ta_{n-\hf,\tau(n-\hf)}\Big). \label{MoDefn-2}
\end{eqnarray}
Here $\fS_{\Gamma}$ and $\fS_{\ov\Gamma}$ denote the symmetric groups on the sets
$\Gamma=\{ -p, \ldots, -1\} \cup \{ 1, \ldots, m\} $
and
$\ov\Gamma=\left\{-q+\hf, \ldots, -\hf \right\}  \cup \left\{\hf, \ldots, n-\hf \right\}$, respectively.
Let $B$ be the sum in \eqnref{MoDefn-2}, and let $\si \in \fS_{\Gamma}$.
The vector $w:=a_{\si(-p+1),-p+1}\cdots a_{\si(-1),-1}a_{\si(1),1} \cdots a_{\si(m),m} Bv$, if nonzero, has weight $-(\ep_{\si(-p)}-\ep_{-p})+\mu$.
If $\si(-p)\not=-p$, then $-(\ep_{\si(-p)}-\ep_{-p})+\mu\notin\Xis$, and hence $w=0$.
Thus, only those $\si \in \fS_{\Gamma}$ with $\si(-p)=-p$ in the summation of \eqref{MoDefn-1} will contribute to $\Ber \big(\ocL\big) v$.
Suppose now that $\si(-p)=-p$.
As $w$ is a sum of vectors of the form $(v_1 \otimes \ldots \otimes v_\ell) z^r\pz^s$, where $r, s\in \Z$ and $v_k$ is some weight vector of $\ov L_k$ with weight $\mu_k$ satisfying $\sum_{k=1}^\ell \mu_k = \mu$, we see by \lemref{weight=0} that each $\mu_k  \in \Xi_{(p-1,q|m,n)}$, and so $\oE_{-p,-p}^{(k)}(v_1 \otimes \ldots \otimes v_\ell z^r\pz^s) =\mu_k(\oE_{-p,-p}) (v_1 \otimes \ldots \otimes v_\ell) z^r\pz^s =0$ for all $k$.
Therefore, $\oE_{-p,-p}(z)w=0$, and any such $\si$ contributes the term $(\pz+K(z)) w=(\pz+c(z)) w$ to $\Ber \big(\ocL\big) v$.
By \lemref{wta}(i), we conclude that
$$
\Ber \big(\ocL\big) v=\big(\pz+c(z)\big) \Ber \big(\ocL_{(p-1,q|m,n)}\big) v.
$$
By definition, $\Ber \big(\cL) v=\Ber \big(\ocL\big) v$ and $\Ber \big(\cL_{(p-1,q|m,n)} \big)  v=\Ber \big(\ocL_{(p-1,q|m,n)}\big) v$.
This proves (i) for $p'=p-1$.

Similarly, it suffices to prove (ii) for the case $q'=q-1$.
Let $\mu \in \Xi_{(p,q-1|m,n)}$.
Choose $\pi \in \fS_\Is$ such that $\Is^\pi$ has the total order
\begin{eqnarray*}
&\hspace{-6cm} -p\lepi \cdots\lepi -1\lepi 1\lepi\cdots\lepi m \\
&\hspace{1cm} \lepi -q+\frac{3}{2}\lepi\cdots\lepi -\hf \lepi  \hf \lepi\cdots\lepi n-\hf \lepi -q+\hf.
\end{eqnarray*}
Then $\ocL^\pi$ is a matrix of type $\s^\pi:=(0^{p+m}, 1^{q+n})$.
Using \propref{Ber-sig-hL} and the formula for $[\Ber^{\s^\pi} (\ocL^\pi)]^{-1}$ in \cite[Corollary 2.16]{MR}, we find that $[\Ber \big(\ocL\big)]^{-1}$ equals
\begin{eqnarray*}
& \hspace{-4.5cm} \Big(\sum_{\si \in \fS_{\Gamma}} (-1)^{l(\si)} \ta_{\si(-p),-p}\cdots \ta_{\si(-1),-1} \ta_{\si(1),1} \cdots \ta_{\si(m),m} \Big) \\
& \hspace{0.5cm} \cdot\Big(\sum_{\tau \in \fS_{\ov \Gamma}} (-1)^{l(\tau)} a_{-q+\frac{3}{2},\tau(-q+\frac{3}{2})}\cdots a_{-\hf,\tau(-\hf)} a_{\hf,\tau(\hf)} \cdots a_{n-\hf,\tau(n-\hf)} a_{-q+\hf,\tau(-q+\hf)} \Big).
\end{eqnarray*}
 Let $X=\Ber \big(\ocL\big)$, $Y=\Ber \big(\ocL_{(p,q-1|m,n)} \big)$, and $\gam=\pz+c(z)$.
By \lemref{wta}(ii) and an argument similar to the proof of (i), we have
$$
X ^{-1}w=   Y^{-1} \gam w \qquad \mbox{ for all $w \in \ov \mL_\mu$.}
$$
Consequently,
$$
\gam X v= \sum_{r=0}^\infty \big(1-X^{-1}\gam^{-1}\big)^r v= \sum_{r=0}^\infty (1-Y^{-1})^r v =Y v,
$$
where the second equality follows from the fact that each coefficient of $z^i\pz^j$ in $X^{-1}$, which belongs to $\Az_\pdn$, preserves any weight space.
This proves (ii) for $q'=q-1$.

As to (iii) (resp., (iv)), proving the case $m'=m-1$ (resp., $n'=n-1$) suffices.
This can be achieved by the same argument as in the proof of \cite[Proposition 3.16]{ChL25} for $m'=m-1$ (resp., $n'=n-1$) there.
We leave the details to the reader.
\end{proof}

For any singular weight $\xi$ of $\mL$, we have $\mL^\sing_{\xi}=\omL^\sing_{\ov\xi}$ for some $\ov\xi  \in \ov {\mf X}^{d}_\pdn$.
We will restrict our attention to $\ov\mL$.
For $i=1,\ldots,\ell$, write
$$
\ov\xi_i=\ovlas_{(i)} \qquad \text{and} \qquad  \ov\xi=\ovlas,
$$
where $\la_{(i)}\in \oP^{d_i}_\pdn$ and $\la\in \oP^d_\pdn$.
Choose $r \in \N$ large enough that $m\le r$, $q\le r$, $(\la^+)^\prime_1 \le r$ and $\la^-_1 \le r$, and  $(\la_{(i)}^+)^\prime_1 \le r$ and $(\la_{(i)}^-)_1 \le r$ for all $i=1, \ldots, \ell$.
Obviously, $\oP^k_\pdn \subseteq \oP^k_{(p, r|r, n)}$ for all $k \in \N$.
For $i=1,\ldots,\ell$, let $\ov L^\prime_i=L(\og_{(p, r|r, n)}, \ovla_{(i)}^{(p, r|r, n)})$ and
$$
\ov\mL^\prime=\ov L^\prime_1\otimes \cdots \otimes\ov L^\prime_\ell.
$$
By \propref{trun-sep}, $\tr_\pdn(\ov L^\prime_i)= \ov L_i$.
This means that $\tr_\pdn(\ov\mL^\prime)=\ov\mL$.
According to \exref{ex}, $(\ovla_{(i)}^{(p, r|r, n)})^\sig=\ov\xi_i$ and $(\ovla^{(p, r|r, n)})^\sig=\ov\xi$, where $\sig:=\sig_{(q,m)}$ is defined as in \eqnref{si-order}.
Furthermore, $(\ov\mL^\prime)_{\ov\xi}^{\sig\mm\sing}=\ov\mL_{\ov\xi}^\sing$.

For $i=1,\ldots,\ell$, it is easy to see that $\ov L^\prime_i$ has the highest weight
$$
 \ovla_{(i)}^{(p, r|r, n)}=- \sum_{j=1}^{r} (\la^{-}_{(i)})^\prime_{j} \ep_{-j+\hf}   +\sum_{j=1}^r (\la^{+}_{(i)} )_j\, \ep_j+d_i\La_0.
$$
That is, $ \ovla_{(i)}^{(p, r|r, n)}= \ovla_{(i)}^{(0, r|r, 0)}$, and we may regard $\ovla_{(i)}^{(p, r|r, n)} \in \ov\fh^{*}_ {(0, r|r, 0)}$, which is the highest weight of $\tr_\orr(\ov L^\prime_i)$.
Therefore, $\tr_\orr(\ov L^\prime_i)=\ov L(\og_\orr,\ovla_{(i)}^\orr)$.
Similarly,
$$
 \ovla^{(p, r|r, n)}=- \sum_{j=1}^{r} (\la^{-})^\prime_{j} \ep_{-j+\hf}   +\sum_{j=1}^r (\la^{+} )_j\, \ep_j+d \La_0
$$
and $ \ovla^{(p, r|r, n)}= \ovla^{(0, r|r, 0)}$.

For $i=1,\ldots,\ell$, let $\mu_{(i)}$ be the partition whose conjugate partition is
$$
\mu_{(i)}^\prime=\big(d_i-(\la^{-}_{(i)})^\prime_r,\ldots,d_i-(\la^{-}_{(i)})^\prime_1, (\la^{+}_{(i)})^\prime_1,\ldots,(\la^{+}_{(i)})^\prime_r \big).
$$
Note that $\mu_{(i)}$ is indeed a partition since $(\la^{-}_{(i)})^\prime_1+ (\la^{+}_{(i)})^\prime_1 \le d_i$, and that by the choice of $r$, $\mu_{(i)} \in \cP_\rr$.
Recall from \eqref{weight-wtla} that
$$
\wt\mu_{(i)}^\rr= \sum_{j=1}^{r} \big(d_i-(\la^{-}_{(i)})^\prime_{j} \big) \ep_{-j+\hf}   +\sum_{j=1}^r (\la^{+}_{(i)} )_j\, \ep_j.
$$
Similarly, let $\mu \in \cP_\rr$ be the partition whose conjugate partition is
$$
\mu^\prime=\big(d-(\la^{-})^\prime_r,\ldots,d-(\la^{-})^\prime_1, (\la^{+})^\prime_1,\ldots,(\la^{+})^\prime_r \big).
$$
Obviously,
$$
\wt\mu^\rr= \sum_{j=1}^{r} \big(d-(\la^{-})^\prime_{j} \big) \ep_{-j+\hf}   +\sum_{j=1}^r (\la^{+} )_j\, \ep_j.
$$

For $i=1,\ldots,\ell$, let
$$
L^\pp_i=L(\mg_\orr, \wt\mu_{(i)}^\rr) \qquad \text{and} \qquad \ov L^\pp_i=\tr_\orr(\ov L^\prime_i).
$$
Then $L^\pp_i=(\ov L^\pp_i)^\iota$, and $L^\pp_i$ is an irreducible polynomial $\mg_\mn$-module.

Define
$$
\mL^\pp= L_1^\pp\otimes\cdots\otimes  L^\pp_\ell  \qquad \text{and} \qquad \ov\mL^\pp=\ov L_1^\pp\otimes\cdots\otimes \ov L^\pp_\ell.
$$
It is clear that $\ov \mL^\pp=\tr_\orr(\ov \mL^\prime)$ and
\begin{equation} \label{wt-nu}
(\ov\mL^\pp)_{\ovla^{(0, r|r, 0)}}^\sing=(\mL^\pp)_{\wt\mu^\rr}^\sing.
\end{equation}

\vspace{3mm}

\noindent\emph{Proof of \thmref{Main}}:
Let $\z \in \Xl$. For ease of exposition, we say that a commutative algebra $\cB^\z$ is $\cfd$ on a $\cB^\z$-module $V$ provided that the properties (i), (ii), and (iii) of \thmref{Main} are satisfied if $\cA_\pdn^\z$ and $\mL_\xi^{\sing}$ are replaced by $\cB^\z$ and $V$, respectively.

By \corref{ChL25-weight} and \eqref{wt-nu},  $\cA_\orr^\z$ is $\cfd$ on  $(\ov\mL^\pp)_{\ovla^{(0, r|r, 0)}}^\sing$.
Since $M:=( \ov \mL^\prime)_{\ovla^{(p, r|r, n)}}^\sing=(\ov\mL^\pp)_{\ovla^{(0, r|r, 0)}}^\sing$, we have $(\cA_{(p,r|r,n)}^\z)_M=(\cA_\orr^\z)_M$ by \propref{trunc}.
Thus, $\cA_{(p,r|r,n)}^\z$ is $\cfd$ on $( \ov \mL^\prime)_{\ovla^{(p, r|r, n)}}^\sing$.

Recall that $\ov\xi=(\ovla^{(p, r|r, n)})^\sig$.
According to \propref{A-iso}, there exists $A \in U(\og_{(p, r|r,n)})$, which is a product of elements in $\setc*{ \oE_{i,j} }{i,j \in \Is^-_{p|r}\cup\Is^+_{r|n}, \, ij>0, \, i+j \notin \Z}$, such that the map
\begin{equation} \label{Arr-iso}
\varphi: (\ov\mL^\prime)_{\ovla^{(p, r|r, n)}}^\sing \longrightarrow   (\ov\mL^\prime)_{\ov\xi}^{\sig\mm\sing},
\end{equation}
defined by $\varphi(v)=A v$ for $v \in (\ov\mL^\prime)_{\ovla^{(p, r|r, n)}}^\sing$, is an $\cA^\z_{(p, r|r,n)}$-isomorphism.
Hence, $\cA_{(p,r|r,n)}^\z$ is $\cfd$ on $N:=(\ov\mL^\prime)_{\ov\xi}^{\sig\mm\sing}$.
Since $N=\ov\mL_{\ov\xi}^\sing$, $(\Az_{(p, q|m, n)})_N=(\Az_{(p, r|r, n)})_N$ in view of \propref{trunc}, and therefore $\Az_{(p, q|m, n)}$ is $\cfd$ on $N$.
This completes the proof of \thmref{Main}.
\qed

\section{Super Bethe ansatz} \label{Bethe-ans}

We use the notations of \secref{uni} and \secref{pf}.
Fix $\z \in \Xl$.
In this section, we show that an eigenbasis for the Gaudin algebra $\cA_\pdn^\z$ on $\mL^\sing_\xi$ can be constructed from any eigenbasis for the Gaudin algebra $\cA_\orr^\z$ on the $\theta$-singular weight space of $\mL^\pp$ determined by $\mL^\sing_\xi$.
This, together with the Bethe ansatz method, enables us to obtain the super Bethe ansatz for $\cA_\pdn^\z$ on $\mL^\sing_\xi$.

\subsection{Bethe ansatz method} \label{BAM}
We review the Bethe ansatz method of constructing eigenvectors for the Gaudin algebra on a tensor product of irreducible finite-dimensional $\gl_m$-modules (\cite{BF, FFR}).

Note that $\gl_m=\mg_{(0,0|m,0)}$.
Write $\cA_{m}^\z=\cA_{(0,0|m,0)}^\z$.
Let $V_k=L(\gl_m, \tau_k)$ for some dominant integral weight $\tau_k$ for $\gl_m$, and let $v_k$ be a highest weight vector of $V_k$ for $k=1, \ldots, \ell$.
Define
$$
\mV =V_1  \otimes \cdots \otimes  V_\ell \qquad \text{and} \qquad |0 \rangle=v_1 \otimes \ldots \otimes v_\ell \in \mV.
$$
Let $f_j=E_{j+1,j}$ for $j=1, \ldots, m-1$.
Given $i_1,\ldots,i_s \in \{1,\ldots, m-1 \}$ (not necessarily distinct) and any pairwise distinct $w_1,\ldots, w_s \in \C$ with $w_j \ne z_k$ for all $j,k$,
we define
$$
 \big|w_1^{i_1},\ldots,w_s^{i_s} \big\rangle =\sum_{(I^1,\ldots,I^\ell)} \prod_{k=1}^\ell  \frac{f_{i_{j^k_1}}^{(k)} \cdots f_{i_{j^k_{a_k}}}^{(k)}} {(w_{j^k_1}-w_{j^k_2}) \cdots (w_{j^k _{a_k}}-w_{j^k _{a_k+1}})} |0 \rangle   \in \mV .
$$
The summation is taken over all ordered partitions $I^1\cup I^2\cup \ldots \cup I^\ell$ of the set $\{1, \ldots, s \}$, where $I^k :=\big\{j^k _1,j^k _2,\ldots,j^k _{a_k } \big\}$ and $w_{j^k _{a_k+1}}:=z_k$ for $k=1, \ldots, \ell$.
The vector $\big|w_1^{i_1},\ldots,w_s^{i_s} \big\rangle$ is called a \emph{Bethe vector} in $\mV $.

Let $\{ \al_i:=\ep_i-\ep_{i+1} \, | \, i=1, \ldots, m-1 \}$ be the set of all simple roots of $\gl_m$, and let $\check\al_i=\Eii-E_{i+1,i+1}$ for $i=1, \ldots, m-1$.
The equations
\begin{equation} \label{BAn}
\sum_{k=1}^\ell\frac{\tau_k (\check \al_{i_j})}{w_j-z_k}-\sum_{\substack{r=1 \\ r \ne j}}^s \frac{\al_{i_r}(\check\al_{i_j})}{w_j-w_r}=0,  \qquad j=1,\ldots, s,
\end{equation}
are called the \emph{Bethe ansatz equations} (\cite{BF, FFR}).
If \eqref{BAn} hold and the vector $\big|w_1^{i_1},\ldots,w_s^{i_s} \big\rangle$ is nonzero, then $\big|w_1^{i_1},\ldots,w_s^{i_s} \big\rangle$ is an eigenvector of $(\Am)_{\mV^\sing_{\! \ga}}$, where $\ga:=\sum_{k=1}^\ell \tau_k-\sum_{r=1}^s \al_{i_r}$ (see \cite{MTV06, RV}).

The \emph{completeness of the Bethe ansatz} is a conjecture asserting that the Bethe vectors form an eigenbasis for $(\Am)_{\mV^\sing_{\! \ga}}$ for a generic $\z$.
In \cite{MV05}, positive evidence for the conjecture is given.
However, it is shown in \cite{MV07} that the Bethe ansatz equations have no solutions for some examples.

Let
$$
\cE_{\ns i}(z)=\sum_{k=1}^\ell \frac{\tau_k (\Eii)}{z-z_k}-\sum_{r=1}^s \frac{\al_{i_r}(\Eii)}{z-w_r}, \qquad i=1, \ldots, m.
$$
Set
$$
\bD_m=\big(\pz-\cE_{\ns 1}(z) \big) \cdots \big(\pz-\cE_{\ns m}(z) \big).
$$
The Gaudin algebra $\cA_{m}^\z$ is determined by $\Ber^{\s_0}(\cL^\z_{(0,0|m,0)})=\cdet(\cL^\z_{(0,0|m,0)})$, where $\s_0=(0^m)$ (see \propref{CFR}).
Suppose that the Bethe ansatz equations \eqref{BAn} are satisfied. Then
$$
\cdet(\cL^\z_{(0,0|m,0)}) \big|w_1^{i_1},\ldots,w_s^{i_s} \big\rangle=\bD_m  \big|w_1^{i_1},\ldots,w_s^{i_s} \big\rangle.
$$
(see \cite[Theorem 9.2]{MTV06} or \cite[Theorem 3.2]{MM17})(cf. \cite[Theorem 3]{FFR}).

\subsection{Eigenvectors and eigenvalues} \label{eig}

We keep the notations introduced in \secref{uni} and \secref{pf} unless otherwise stated.
We also use the convention \eqref{drop-s}.
We summarize the isomorphisms of vector spaces considered in \secref{uni} and \secref{pf} as follows:

\begin{equation}\label{diagram}
\begin{tikzcd}
(\ov\mL^\prime)_{\ov\xi}^{\sig\mm\sing} \arrow[d,swap, "\tr_\pdn"]
&\quad (\ov\mL^\prime)_{\ovla^{(p, r|r, n)}}^\sing\arrow[l,swap,"\varphi"]  \arrow[d,"\tr_\orr"]     &\\
\mL^\sing_{\xi}=\omL^\sing_{\ov\xi}\qquad&\qquad \qquad (\ov\mL^\pp)_{\ovla^{(0, r|r, 0)}}^\sing=(\mL^\pp)_{\wt\mu^\rr}^\sing& (\mL^\pp)^{\theta\mm\sing}_{\ov\mu^\rr}\arrow[l,swap,"\vartheta"]
\end{tikzcd}
\end{equation}
Here the $\cA^\z_{(p, r|r,n)}$-isomorphism $\varphi$ is the map in \eqref{Arr-iso},
and the $\cA_\orr^\z$-isomorphism $\vartheta$ is given by $\vartheta(v)=X v$ for some $X \in U(\mg_\orr)$ by \propref{A-iso-poly} and \eqref{L-theta}; 
furthermore, the vertical maps are restrictions of the natural projections $\tr_\pdn: \ov\mL^\prime \longrightarrow \ov\mL$ and $\tr_\orr: \ov\mL^\prime \longrightarrow \ov\mL^\pp$ induced by the truncation functors $\tr_\pdn$ and $\tr_\orr$, respectively, and they become identity maps.
As $(\ov\mL^\prime)_{\ovla^{(p, r|r, n)}}^\sing=(\mL^\pp)^{\sing}_{\wt\mu^\rr}$ and $(\ov\mL^\prime)_{\ov\xi}^{\sig\mm\sing}=\mL^\sing_{\xi}$,
the composition
$$
\psi:=\varphi \circ \vartheta
$$
is a map from $(\mL^\pp)^{\theta\mm\sing}_{\ov\mu^\rr}$ to $\mL^\sing_{\xi}$.

Recall that $c(z)=\sum_{i=1}^\ell d_i (z-z_i)^{-1}$.
The following theorem not only provides a method of constructing an eigenbasis $\Gamma$ for $(\cA_\pdn^\z)_{\mL^\sing_{\xi}}$ from any eigenbasis $\Gamma^\pp$ for $(\cA_\orr^\z)_{ (\mL^\pp)^{\theta\mm\sing}_{\ov\mu^\rr}}$ but also relates the eigenvalues of $\Gamma^\pp$ to those of $\Gamma$.

\begin{thm} \label{gls-gls-eigen}
\begin{enumerate}[\normalfont(i)]

\item Let $v$ and $w$ be eigenvectors of $(\cA_\orr^\z)_{ (\mL^\pp)^{\theta\mm\sing}_{\ov\mu^\rr}}$ and $(\cA_\pdn^\z)_{\mL^\sing_{\xi}}$, respectively, and let $\cD^\pp_v$ and $\cD_w$ be the pseudo-differential operators associated to $\big(\Ber \big(\cL_\orr^\z \big), v \big)$ and $\big(\Ber \big(\cL_\pdn^\z \big), w \big)$, respectively. 
Then $\psi (v)$ and $\psi^{-1}(w)$ are eigenvectors of $(\cA_\pdn^\z)_{\mL^\sing_{\xi}}$ and $(\cA_\orr^\z)_{ (\mL^\pp)^{\theta\mm\sing}_{\ov\mu^\rr}}$, respectively, and
\begin{equation} \label{eigen-1}
\cD_{\psi(v)} \big( \psi (v) \big) =  \big(\pz+c(z) \big)^{p-q+r}  \cD^\pp_v  \pz^{m-n-r}  \big(\psi (v) \big)
\end{equation}
and
\begin{equation} \label{eigen-2}
\cD_{w} \big( \psi^{-1}(w) \big) =  \big(\pz+c(z) \big)^{p-q+r}  \cD^\pp_{ \psi^{-1}(w)}  \pz^{m-n-r}  \big( \psi^{-1}(w) \big)
\end{equation}

\item The map $\Gamma^\pp  \mapsto \psi(\Gamma^\pp)$, where $\Gamma^\pp$ is an eigenbasis for $(\cA_\orr^\z)_{ (\mL^\pp)^{\theta\mm\sing}_{\ov\mu^\rr}}$, is a one-to-one correspondence between the set of eigenbases for $(\cA_\orr^\z)_{ (\mL^\pp)^{\theta\mm\sing}_{\ov\mu^\rr}}$ and the set of eigenbases for $(\cA_\pdn^\z)_{\mL^\sing_{\xi}}$.

\end{enumerate}
\end{thm}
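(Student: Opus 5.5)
The strategy is to follow the diagram \eqref{diagram} one arrow at a time. At each step we record how the relevant Berezinian acts, using \propref{trunc} for the vertical arrows and the $\Az$-equivariance of $\varphi$ and $\vartheta$ for the horizontal ones.

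First, $\vartheta$ is an $\cA^\z_\orr$-isomorphism by \propref{A-iso-poly}. Hence $\vartheta(v)$ is an eigenvector of $\cA^\z_\orr$ on $(\mL^\pp)^\sing_{\wt\mu^\rr}=(\ov\mL^\prime)^\sing_{\ovla^{(p,r|r,n)}}$, and
\[
\Ber\big(\cL^\z_\orr\big)\vartheta(v)=\cD^\pp_v\,\vartheta(v).
\]
The equality follows because $\Ber(\cL^\z_\orr)$ has coefficients in $\cA^\z_\orr$, and these commute with $X$.

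Next we pass from $\orr$ to $(p,r|r,n)$. Any vector in $(\ov\mL^\prime)_{\ovla^{(p,r|r,n)}}$ has weight in $\Xi_\orr$, and $\Ber(\cL^\z)$ preserves weight spaces. Applying \propref{trunc}(i) with $p'=0$ and then \propref{trunc}(iv) with $n'=0$ to the truncated side, we get
\[
\Ber\big(\cL^\z_{(p,r|r,n)}\big)u=\big(\pz+c(z)\big)^{p}\,\Ber\big(\cL^\z_\orr\big)\pz^{-n}u
\]
for $u$ in that weight space. Care is needed with the order of operators: $\pz^{-n}$ commutes with the $U(\mg)^{\otimes\ell}$-action, and $c(z)$ is scalar, so these factors pass harmlessly through the action. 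Substituting the eigen-relation from the first step, $\vartheta(v)$ is an eigenvector for $\cA^\z_{(p,r|r,n)}$. The same holds for $\psi(v)=A\vartheta(v)$, since $\varphi$ is an $\cA^\z_{(p,r|r,n)}$-isomorphism by \propref{A-iso}, and
\[
\Ber\big(\cL^\z_{(p,r|r,n)}\big)\psi(v)=\big(\pz+c(z)\big)^{p}\cD^\pp_v\pz^{-n}\psi(v).
\]

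Finally, $\psi(v)\in(\ov\mL^\prime)^{\sig\mm\sing}_{\ov\xi}=\mL^\sing_\xi$ has weight in $\Xi_\pdn$. Applying \propref{trunc}(ii) with $q'=q$ inside $r$ and \propref{trunc}(iii) with $m'=m$ inside $r$ gives
\[
\Ber\big(\cL^\z_{(p,r|r,n)}\big)\psi(v)=\big(\pz+c(z)\big)^{q-r}\Ber\big(\cL^\z_\pdn\big)\pz^{r-m}\psi(v).
\]
Inverting the two invertible factors $(\pz+c(z))^{q-r}$ and $\pz^{r-m}$ in the pseudo-differential ring then yields $\psi(v)$ as an eigenvector of $\Ber(\cL^\z_\pdn)$, with the exponents $p-q+r$ and $m-n-r$ appearing in \eqref{eigen-1}. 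This establishes \eqref{eigen-1}. The identity \eqref{eigen-2} follows by applying the same chain to $\psi^{-1}(w)$, once we know that it is an eigenvector. This holds because each of the restrictions above identifies the images of the various Gaudin algebras on the common space, exactly as in the proof of \thmref{Main}: $(\cA^\z_\pdn)_N=(\cA^\z_{(p,r|r,n)})_N$, and likewise at the $\orr$ level.

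For (ii), observe that $\psi$ is a linear isomorphism which intertwines the images of the Gaudin algebras, these images being identified through the equalities of algebras just described. Consequently $\psi$ and $\psi^{-1}$ send eigenvectors to eigenvectors by (i), and bases to bases, which gives the bijection on eigenbases.

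The main obstacle is the bookkeeping of operator order in \propref{trunc}. The factors $\pz^{\pm k}$ sit on the right of the Berezinian, and we must justify moving them past the eigenvalue operators and inverting $(\pz+c(z))^{q-r}$ in $\C\zpz$. We also need to check that the weights of the vectors in question remain in the appropriate $\Xi$ after applying the coefficients of $\Ber$, which holds since those coefficients have weight zero.
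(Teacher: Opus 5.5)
Your proposal is correct and follows the paper's argument. It uses \propref{trunc} twice: parts (i) and (iv) to pass between $\orr$ and $(p,r|r,n)$, and parts (ii) and (iii) to pass between $(p,r|r,n)$ and $(p,q|m,n)$. Together with the fact that $A$ and $X$ commute with the relevant Gaudin algebras, this gives the result.

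The differences are cosmetic. You build the eigen-relation upward from $\vartheta(v)$ and then invert $(\pz+c(z))^{q-r}$ on the left and $\pz^{r-m}$ on the right, whereas the paper starts from $\Ber(\ocL^\z_\pdn)$ applied to $Au$ and rewrites it in one chain. Your treatment of $\psi^{-1}(w)$, via the equality of the images of the Gaudin algebras on the common space, fills in what the paper leaves as ``similarly''.
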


\begin{proof}
We prove (i), and (ii) is immediate from (i).
We have seen that $(\mL^\pp)_{\wt\mu^\rr}^\sing=(\ov\mL^\pp)_{\ovla^{(0, r|r, 0)}}^\sing=(\ov\mL^\prime)_{\ovla^{(p, r|r, n)}}^\sing$ and $\mL^\sing_{\xi}=\ov\mL_{\ov\xi}^\sing=(\ov\mL^\prime)_{\ov\xi}^{\sig\mm\sing}$.
Let $u=\vartheta(v)$.
Since $\vartheta$ is an $\cA_\orr^\z$-isomorphism, we have
$$
\Ber \big(\cL^\z_\orr \big) u=\cD^\pp_v u.
$$
Using \propref{trunc}, we find that
\begin{eqnarray*}
\Ber \big(\ocL_\pdn^\z \big) \big( \varphi (u) \big)
\ns \nns &=& \nns \big(\pz+c(z) \big)^{-q+r} \, \Ber(\ocL^\z_{(p, r|r,n)}) \, \pz^{m-r} \big( A u \big) \\
\nns &=& \nns A \,  \big(\pz+c(z) \big)^{-q+r} \, \Ber(\ocL^\z_{(p, r|r,n)}) \, \pz^{m-r} u \\
\nns &=& \nns A \,  \big(\pz+c(z) \big)^{p-q+r} \, \Ber(\ocL^\z_{(0, r|r,0)}) \, \pz^{m-n-r} u \\
\nns &=& \nns A \, \big(\pz+c(z) \big)^{p-q+r}  \cD^\pp_v  \pz^{m-n-r} u \\
\nns &=& \nns  \big(\pz+c(z) \big)^{p-q+r}  \cD^\pp_v  \pz^{m-n-r}  \big(\varphi (u) \big).
\end{eqnarray*}
Thus, $\varphi(u)$ is an eigenvector of $(\cA_\pdn^\z)_{\mL^\sing_{\xi}}$, and \eqref{eigen-1} follows.
The remaining parts can be proved similarly.
\end{proof}

Choose a positive integer $r' \ge r$ such that $\ell(\mu) \le r'$.
By our choice of $r'$, the weights $\ov\mu^{r'|r}$ and $\ov\mu_{i}^{r'|r}$ can be regarded as elements of ${\mf X}_{r'|0}^+$ (see \eqref{weight-neg}),
i.e., $\ov\mu^{r'|r}=\ov\mu^{r'|0}$ and $\ov\mu_{i}^{r'|r}=\ov\mu_{i}^{r'|0}$ for all $i=1, \ldots, \ell$.
Let
$$
\mV= L(\mg_{(0,0|r',0)},\ov\mu_1^{r'|0}) \otimes \cdots \otimes L(\mg_{(0,0|r',0)},\ov\mu_\ell^{r'|0})
$$
and
$$
\mV^\prime=L(\mg_{(0,r|r',0)},\wt\mu_1^{r'|r})  \otimes \cdots \otimes L(\mg_{(0,r|r',0)},\wt\mu_\ell^{r'|r}).
$$
By \cite[Theorem 2.55]{CW},
$$
\mV^\prime=L^{\pi}(\mg_{(0,r|r',0)},\ov\mu_1^{r'|r})  \otimes \cdots \otimes L^{\pi}(\mg_{(0,r|r',0)},\ov\mu_\ell^{r'|r}),
$$
where $\pi$ is the permutation of the set $\{-r+\hf, \ldots, -\hf, 1, \ldots, r'\}$ defined by sending $1, \ldots, r', -r+\hf, \ldots, -\hf$ to $-r+\hf, \ldots, -\hf, 1, \ldots, r'$, respectively.
Note that $\otr_{r'|0}(\mV^\prime)= \mV$ and $\otr_{r|r}(\mV^\prime)=\mL^\pp$, where $\otr_{r'|0}: \cC_{r'|r} \longrightarrow \cC_{r'|0}$ and $\otr_{r|r}:\cC_{r'|r} \longrightarrow \cC_{r|r}$ are the truncation functors defined in \cite[Section 3.3]{ChL25}.
By \cite[Proposition 3.14]{ChL25}, there exists $Y \in U(\mg_{(0,r|r',0)})$ such that the map $\varsigma: (\mV^\prime)^{\pi\mm\sing}_{\ov\mu^{r'|r}} \longrightarrow (\mV^\prime)^{\theta^\prime\mm\sing}_{\ov\mu^{r|r}}$, defined by $\varsigma (v) =  Y v$, is an $\cA_{(0,r|r',0)}$-isomorphism.
Here $\theta^\prime$ is the permutation of the set $\{-r+\hf, \ldots, -\hf, 1, \ldots, r'\}$ defined by sending $1, \ldots, r, -r+\hf, \ldots, -\hf, r+1, \ldots, r'$ to $-r+\hf, \ldots, -\hf, 1, \ldots, r'$, respectively.
We have isomorphisms of vector spaces as follows:
$$
\begin{tikzcd}
(\mV^\prime)^{\theta^\prime\mm\sing}_{\ov\mu^{r|r}}\ \arrow[d,swap, "\otr_{r|r}"]
&(\mV^\prime)^{\pi\mm\sing}_{\ov\mu^{r'|r}}=(\mV^\prime)^{\pi\mm\sing}_{\ov\mu^{r'|0}}\arrow[l,swap," \varsigma "]  \arrow[d,"\otr_{r'|0}"] \qquad \qquad    &\\
(\mL^\pp)^{\theta\mm\sing}_{\ov\mu^{r|r}} &\qquad\mV^\sing_{\ov\mu^{r'|0}}&
\end{tikzcd}
$$
Here the vertical maps are restrictions of the natural projections $\otr_{r|r}: \mV^\prime \longrightarrow \mL^\pp$ and $\otr_{r'|0}: \mV^\prime \longrightarrow \mV$ induced by the truncation functors $\otr_{r|r}$ and $\otr_{r'|0}$, respectively, and they become identity maps.
As $(\mV^\prime)^{\theta^\prime\mm\sing}_{\ov\mu^{r|r}}=(\mL^\pp)^{\theta\mm\sing}_{\ov\mu^{r|r}}$ and $(\mV^\prime)^{\pi\mm\sing}_{\ov\mu^{r'|r}}=\mV^\sing_{\ov\mu^{r'|0}}$,
we see that $\varsigma$ is a map from $\mV^\sing_{\ov\mu^{r'|0}}$ to $(\mL^\pp)^{\theta\mm\sing}_{\ov\mu^{r|r}}$.

From now on, let $\Psi=\psi \circ \varsigma$, which maps $\mV^\sing_{\ov\mu^{r'|0}}$ to $\mL^\sing_{\xi}$.
The following theorem enables us to obtain an eigenbasis $\Gamma$ for $(\cA_\pdn^\z)_{\mL^\sing_{\xi}}$ from any eigenbasis $\Upsilon$ for $(\Az_{r'})_{\mV^\sing_{\ov\mu^{r'|0}}}$.
It also relates the eigenvalues of $\Upsilon$ to those of $\Gamma$.

\begin{thm} \label{gl-gls-eigen}
\begin{enumerate}[\normalfont(i)]

\item Let $v$ and $w$ be eigenvectors of $(\Az_{r'})_{\mV^\sing_{\ov\mu^{r'|0}}}$ and $(\cA_\pdn^\z)_{\mL^\sing_{\xi}}$, respectively, and let ${\rm D}_v$ and $\cD_w$ be the pseudo-differential operators associated to $\big(\cdet \big(\cL^\z_{(0,0|r',0)} \big), v \big)$ and $\big(\Ber \big(\cLs^\z \big), w \big)$, respectively. 
Then $\Psi (v)$ and $\Psi^{-1}(w)$ are eigenvectors of $(\cA_\pdn^\z)_{\mL^\sing_{\xi}}$ and $(\Az_{r'})_{\mV^\sing_{\ov\mu^{r'|0}}}$, respectively, and
$$
\cD_{\Psi(v)} (\Psi(v))
 = \big(\pz+c(z) \big)^{p-q+r}  {\rm D}_v \pz^{m-n-r-r'} (\Psi(v))
 $$
 and
 $$
\cD_w (\Psi^{-1}(w))
 = \big(\pz+c(z) \big)^{p-q+r}  {\rm D}_{\Psi^{-1}(w)} \pz^{m-n-r-r'} (\Psi^{-1}(w)).
$$

\item The map $\Upsilon  \mapsto \Psi(\Upsilon)$, where $\Upsilon$ is an eigenbasis for $(\Az_{r'})_{\mV^\sing_{\ov\mu^{r'|0}}}$, is a one-to-one correspondence between the set of eigenbases for $(\Az_{r'})_{\mV^\sing_{\ov\mu^{r'|0}}}$ and the set of eigenbases for $(\cA_\pdn^\z)_{\mL^\sing_{\xi}}$.

\end{enumerate}
\end{thm}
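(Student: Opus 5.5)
The plan is to factor $\Psi=\psi\circ\varsigma$ and combine \thmref{gls-gls-eigen} with an analogous statement for $\varsigma$. For that second step we need an analogue of \propref{trunc}(iii),(iv) for the polynomial setting of \cite{ChL25}, which is \cite[Proposition 3.16]{ChL25}.

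Let $v\in\mV^\sing_{\ov\mu^{r'|0}}$ be an eigenvector of $(\Az_{r'})_{\mV^\sing_{\ov\mu^{r'|0}}}$. We view it inside $\mV^\prime$, where $(\mV^\prime)^{\pi\mm\sing}_{\ov\mu^{r'|r}}=\mV^\sing_{\ov\mu^{r'|0}}$. Its weight $\ov\mu^{r'|0}$ lies in $\Xi_{r'|0}$, i.e.\ it has no odd components. The truncation statement (iv) for $\gl_{r'|r}$, applied $r$ times, should then give
$$
\Ber\big(\cL^\z_{(0,r|r',0)}\big)v=\Ber\big(\cL^\z_{(0,0|r',0)}\big)\pz^{-r}v=\cdet\big(\cL^\z_{(0,0|r',0)}\big)\pz^{-r}v={\rm D}_v\pz^{-r}v,
$$
where the middle equality is \propref{CFR}. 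Here $\pz^{-r}$ acts on $v$ simply as a scalar operator. Since $\varsigma(v)=Yv$ and $\varsigma$ is an $\cA^\z_{(0,r|r',0)}$-isomorphism, $Y$ commutes with the Gaudin algebra. So $\Ber(\cL^\z_{(0,r|r',0)})\varsigma(v)={\rm D}_v\pz^{-r}\varsigma(v)$.

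Next we truncate to $\gl_{r|r}$. The vector $\varsigma(v)\in(\mV^\prime)^{\theta^\prime\mm\sing}_{\ov\mu^{r|r}}$ has weight in $\Xi_{r|r}$, so statement (iii) of \cite[Proposition 3.16]{ChL25}, applied $r'-r$ times, gives
$$
\Ber\big(\cL^\z_{(0,r|r',0)}\big)\varsigma(v)=\Ber\big(\cL^\z_{(0,r|r,0)}\big)\pz^{r'-r}\varsigma(v).
$$
Comparing the two displays and cancelling the invertible operator $\pz^{r'-r}$ on the right, we get that $\varsigma(v)$ is an eigenvector of $(\cA^\z_\orr)_{(\mL^\pp)^{\theta\mm\sing}_{\ov\mu^\rr}}$. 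Its associated operator is $\cD^\pp_{\varsigma(v)}={\rm D}_v\pz^{-r-(r'-r)}={\rm D}_v\pz^{-r'}$, in the sense that both sides act identically on $\varsigma(v)$.

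We then feed this into \eqref{eigen-1} of \thmref{gls-gls-eigen}(i). This gives
$$
\cD_{\Psi(v)}(\Psi(v))=\big(\pz+c(z)\big)^{p-q+r}{\rm D}_v\pz^{-r'}\pz^{m-n-r}(\Psi(v)),
$$
which is the claimed formula. The statement for $\Psi^{-1}(w)$ follows by running the same chain backwards, using \eqref{eigen-2} and the fact that $\varsigma^{-1}$ is also a Gaudin-algebra isomorphism. Part (ii) then follows formally: $\Psi$ is a linear isomorphism that maps eigenvectors to eigenvectors in both directions, so it induces a bijection on eigenbases.

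The main obstacle is justifying the composition of pseudo-differential operators acting on a vector. For example, the step ``${\rm D}_v\pz^{-r}$ acting on $\varsigma(v)$'' must be read as ${\rm D}_v\pz^{-r}$ applied to $v$ with coefficients transported by $Y$. This requires that all coefficients be scalars on the relevant vectors, as in the proof of \thmref{gls-gls-eigen}. A second point needing care is that the indexing conventions of $\cL_{(0,r|r',0)}$ and of the order $\theta^\prime$ match the hypotheses of \cite[Proposition 3.16]{ChL25}. I would verify this via \propref{Ber-sig-L}, which lets one reorder to the standard order before truncating.
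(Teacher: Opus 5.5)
Your proposal is correct and follows the paper's route: factor $\Psi=\psi\circ\varsigma$, and combine \thmref{gls-gls-eigen} with the eigenvector correspondence for $\varsigma$, namely $\cD^\pp_{\varsigma(v)}={\rm D}_v\pz^{-r'}$. The only difference is that the paper cites \cite[Proposition 4.9]{ChL25} for the $\varsigma$-step, whereas you re-derive it from the polynomial truncation identities of \cite[Proposition 3.16]{ChL25} (with the factors $\pz^{-1}$ and $\pz$ placed on the right) together with \propref{CFR}; this is a valid unpacking of that citation.
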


\begin{proof}
This follows from \thmref{gls-gls-eigen} and \cite[Proposition 4.9]{ChL25}.
\end{proof}

The following is a special case of \cite[Theorem 5.1]{ChL25}.

\begin{prop} \label{Bethe-gl-eigen}
 If the Bethe ansatz equations hold for $(\Az_{r'})_{\mV^\sing_{\ov\mu^{r'|0}}}$, and $v$ is a Bethe vector in $\mV^\sing_{\ov\mu^{r'|0}}$, then $w:= \varsigma (v)$ is an eigenvector of $(\cA_\orr)_{(\mL^\pp)^{\theta\mm\sing}_{\ov\mu^{r|r}}}$.
Moreover,
$$
\Ber \big(\cL^\z_\orr \big)w
=  \bD_{r'}  \pz^{-r'}w .
$$
\end{prop}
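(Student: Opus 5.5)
The plan is to transport the known eigenvalue formula for the Bethe vector $v$ through the chain of identifications in the second diagram above. The steps are: apply the classical Bethe ansatz result to $v$, pass up from $\gl_{r'}$ to $\mg_{(0,r|r',0)}$ via a truncation identity for Berezinians, move across the isomorphism $\varsigma$, and then pass down to $\mg_\orr$ via a second truncation identity.

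\emph{Step 1 (classical input).} Since the Bethe ansatz equations \eqref{BAn} hold and $v$ is a (nonzero) Bethe vector in $\mV^\sing_{\ov\mu^{r'|0}}$, the result quoted in \secref{BAM} (\cite[Theorem 9.2]{MTV06}), together with \propref{CFR}, gives
$$\cdet\big(\cL^\z_{(0,0|r',0)}\big)v=\Ber^{\s_0}\big(\cL^\z_{(0,0|r',0)}\big)v=\bD_{r'}\,v.$$

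\emph{Step 2 (first truncation).} We regard $v$ as an element of $(\mV^\prime)^{\pi\mm\sing}_{\ov\mu^{r'|0}}$, using the identification $\otr_{r'|0}(\mV^\prime)=\mV$. The weights of $v$ lie in $\Xi_{r'|0}$, so every odd index carries weight $0$. We then apply the polynomial analogue of \propref{trunc}(iv), namely \cite[Proposition 3.16]{ChL25}, removing the $r$ odd indices. Combined with \propref{Ber-sig-L} to handle the change of ordering to the $\pi$-order, this should give
$$\Ber\big(\cL^\z_{(0,r|r',0)}\big)v=\Ber\big(\cL^\z_{(0,0|r',0)}\big)\pz^{-r}v=\bD_{r'}\pz^{-r}v.$$
Here $v$ is a $z$-independent vector, so $\pz^{-r}v=v\,\pz^{-r}$.

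\emph{Step 3 (transport across $\varsigma$).} The map $\varsigma(v)=Yv$ is an $\cA^\z_{(0,r|r',0)}$-isomorphism, and $\cA^\z_{(0,r|r',0)}$ commutes with $Y$. Hence $w:=\varsigma(v)\neq 0$ and
$$\Ber\big(\cL^\z_{(0,r|r',0)}\big)w=\bD_{r'}\pz^{-r}w.$$

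\emph{Step 4 (second truncation).} The vector $w$ lies in $(\mV^\prime)^{\theta^\prime\mm\sing}_{\ov\mu^{r|r}}=(\mL^\pp)^{\theta\mm\sing}_{\ov\mu^{r|r}}$, whose weights lie in $\Xi_{r|r}$. Applying the analogue of \propref{trunc}(iii) (again via \cite[Proposition 3.16]{ChL25}) to remove the even indices $r+1,\ldots,r'$ gives
$$\Ber\big(\cL^\z_{(0,r|r',0)}\big)w=\Ber\big(\cL^\z_\orr\big)\pz^{r'-r}w.$$

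\emph{Step 5 (conclusion).} The coefficients of $\Ber(\cL^\z_\orr)$ lie in $\cA^\z_\orr$. Since $\pz^{r'-r}$ acts on the constant vector $w$ by right multiplication, comparing Steps 3 and 4 gives
$$\Ber\big(\cL^\z_\orr\big)w\cdot\pz^{r'-r}=\bD_{r'}\pz^{-r}w.$$
Multiplying on the right by $\pz^{r-r'}$ yields $\Ber(\cL^\z_\orr)w=\bD_{r'}\pz^{-r'}w$. The right-hand side is a scalar pseudo-differential operator applied to $w$, so every coefficient of $\Ber(\cL^\z_\orr)$, and hence all of $\cA^\z_\orr$, acts on $w$ by scalars. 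Therefore $w$ is an eigenvector of $(\cA_\orr)_{(\mL^\pp)^{\theta\mm\sing}_{\ov\mu^{r|r}}}$.

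The main obstacle I expect is Steps 2 and 4: checking that the truncation identities of \cite[Proposition 3.16]{ChL25} apply in the orderings used here, with the correct direction and sign of the $\pz$-exponent. In particular, the odd indices in $\mg_{(0,r|r',0)}$ come first in $\Is$, whereas \propref{trunc}(iii),(iv) are stated for removing indices at the end. I would first try to reorder via \propref{Ber-sig-L} so that the indices being removed sit last. I would then repeat the weight argument of \lemref{wta}: a nonzero term would require a weight outside $\Xi$, hence vanishes, so the removed indices contribute only a bare factor $\pz^{\pm1}$ each. This works because those indices carry no weight and there is no central term $K(z)$ in the polynomial setting.
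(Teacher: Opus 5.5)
Your proposal is correct and is essentially the argument behind the paper's one-line proof, which simply cites \cite[Theorem 5.1]{ChL25}. Your argument transports the classical identity $\cdet(\cL^\z_{(0,0|r',0)})v=\bD_{r'}v$ through the two polynomial truncations and the $\cA^\z_{(0,r|r',0)}$-isomorphism $\varsigma$, which is the same template the paper uses for \thmref{gls-gls-eigen}. You also resolve the direction issue correctly: in the polynomial setting the odd indices $-r+\hf,\ldots,-\hf$ carry nonnegative weights, so they behave like the indices in \propref{trunc}(iv), each contributing a right factor $\pz^{-1}$, and not like those in \propref{trunc}(ii); this is exactly what produces $\bD_{r'}\pz^{-r'}$.
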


\begin{thm} \label{super-Bethe}
If the Bethe ansatz equations hold for $(\Az_{r'})_{\mV^\sing_{\ov\mu^{r'|0}}}$, and $v$ is a Bethe vector in $\mV^\sing_{\ov\mu^{r'|0}}$, then $w:=\Psi (v)$ is an eigenvector of $(\cA_\pdn^\z)_{\mL^\sing_{\xi}}$.
Moreover,
$$
\Ber \big(\cLs^\z \big) w
 = \big(\pz+c(z) \big)^{p-q+r}  \bD_{r'}  \pz^{m-n-r-r'} w.
$$

\end{thm}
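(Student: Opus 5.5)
The plan is to combine \propref{Bethe-gl-eigen} with the chain of $\cA$-isomorphisms in \eqref{diagram}, following the computation in the proof of \thmref{gls-gls-eigen}. Since $\Psi=\psi\circ\varsigma=\varphi\circ\vartheta\circ\varsigma$, we have $w=\psi(\varsigma(v))$. Set $u:=\varsigma(v)\in(\mL^\pp)^{\theta\mm\sing}_{\ov\mu^{r|r}}$.

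First, \propref{Bethe-gl-eigen} says that $u$ is an eigenvector of $(\cA_\orr^\z)_{(\mL^\pp)^{\theta\mm\sing}_{\ov\mu^{r|r}}}$ and that $\Ber(\cL^\z_\orr)u=\bD_{r'}\pz^{-r'}u$. To identify the associated operator, note that $\bD_{r'}\pz^{-r'}$ has coefficients in $\C\blb z^{-1}\brb$ and act on $u$ only through scalars. Comparing coefficients of $\pz^i$, and using that the Berezinian's coefficients act by scalars on an eigenvector, we get
$$
\cD^\pp_u=\bD_{r'}\pz^{-r'}.
$$
Since the coefficients of $\bD_{r'}$ and $\pz^{-r'}$ are scalar-valued, the composite is an honest element of $\C\blb z^{-1}\brb\zpz$ that commutes past nothing problematic.

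Next, apply \thmref{gls-gls-eigen}(i) to the eigenvector $u$. It gives that $\psi(u)=w$ is an eigenvector of $(\cA_\pdn^\z)_{\mL^\sing_\xi}$, and that
$$
\Ber(\cLs^\z)w=\cD_w w=\big(\pz+c(z)\big)^{p-q+r}\cD^\pp_u\,\pz^{m-n-r}w .
$$
Substituting $\cD^\pp_u=\bD_{r'}\pz^{-r'}$ and combining $\pz^{-r'}\pz^{m-n-r}=\pz^{m-n-r-r'}$ yields the claimed formula. Here $\Ber(\cLs^\z)w=\cD_w w$ holds by the definition of the associated pseudo-differential operator.

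The only delicate point is the meaning of expressions such as $\big(\pz+c(z)\big)^{p-q+r}\cD\,\pz^{k}w$. These should be read as a scalar pseudo-differential operator applied to $w$, exactly as in \eqnref{eigen-1}. The substitution is legitimate because the identity in \thmref{gls-gls-eigen} is really an identity of operators with scalar coefficients: it is derived by pushing $A$ and $X$ through scalar pseudo-differential operators. Hence replacing $\cD^\pp_u$ by the scalar operator $\bD_{r'}\pz^{-r'}$ is valid. Once this is granted, the argument is immediate.
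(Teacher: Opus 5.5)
Your proposal is correct and follows the paper's proof: the paper also deduces the statement directly from \propref{Bethe-gl-eigen} and \thmref{gls-gls-eigen}. You additionally spell out the two details that argument leaves implicit, namely that setting $u=\varsigma(v)$ and comparing coefficients gives $\cD^\pp_u=\bD_{r'}\pz^{-r'}$, and that $\pz^{-r'}\pz^{m-n-r}=\pz^{m-n-r-r'}$ once this is substituted into \thmref{gls-gls-eigen}(i).
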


\begin{proof}
This follows from \thmref{gls-gls-eigen} and \propref{Bethe-gl-eigen}.
\end{proof}

We call \thmref{super-Bethe} the \emph{super Bethe ansatz} for $\cA_\pdn^\z$ on $\mL^\sing_\xi$.
If the completeness of the Bethe ansatz is true for $(\Az_{r'})_{\mV^\sing_{\ov\mu^{r'|0}}}$, then $\Psi (\fB)$ is an eigenbasis for $(\cA_\pdn^\z)_{\mL^\sing_{\xi}}$ for a generic $\z$, where $\fB$ is the set consisting of Bethe vectors in $\mV^\sing_{\ov\mu^{r'|0}}$.

The following example is similar to \cite[Example 5.3]{ChL25} with a slight modification.

\begin{ex} \label{ex-Bethe}
Let $\z=(z_1, z_2) \in {\bd X}_{\! 2}$, and let $\mL^\pp=\Lt(\mg_{(0,2|2,0)}, \ov\mu_1^{2|2})  \otimes \Lt(\mg_{(0,2|2,0)}, \ov\mu_2^{2|2})$,
where $\mu_1:=(3, 2, 1, 1)$ and $\mu_2:=(1)$ are $(2|2)$-hook partitions, and $\theta$ is the permutation of the set $\{-\thf, -\hf, 1, 2\}$ defined by sending $1, 2, -\thf, -\hf$ to $-\thf, -\hf, 1, 2$, respectively.
We consider the Gaudin algebra $\big(\cA_{(0,2|2,0)} ^\z\big)_{(\mL^\pp)^{\theta\mm\sing}_{\ov\mu^{2|2}}}$, where $\mu:=(3, 2, 2, 1) \in \cP_{2|2}$.
We find that
$$
\ov\mu_1^{2|2}=3\ep_1+2\ep_2+ 2\ep_{-\thf}, \qquad \ov\mu_2^{2|2}=\ep_1,  \qquad  \ov\mu^{2|2}=3\ep_1+2\ep_2+2 \ep_{-\thf} +\ep_{-\hf}.
$$
Also, $\ov\mu_1^{4|2}=3 \ep_1+2 \ep_2 + \ep_3 + \ep_4=\ov\mu_1^{4|0}$, $\ov\mu_2^{4|2}=\ep_1=\ov\mu_2^{4|0}$, and $\ov\mu^{4|2}=3 \ep_1+2 \ep_2 + 2 \ep_3 + \ep_4=\ov\mu^{4|0}$.

Let $\mV=L(\mg_{(0,0|4,0)}, \ov\mu_1^{4|0}) \otimes L(\mg_{(0,0|4,0)}, \ov\mu_2^{4|0})$.
Suppose \eqref{BAn} are satisfied, i.e.,
$w_1=\frac{1}{4}(z_1+3z_2)$ and $w_2=\frac{1}{8}(5z_1+3z_2)$ (see \emph{loc. cit.}), and $\big|w_1^1, w_2^2 \big\rangle$ is nonzero.
Then $\big|w_1^1, w_2^2 \big\rangle$ is an eigenvector of $\big(\cA_4^\z\big)_{\mV^\sing_{\ov\mu^{4|0}}}$.
Choose $ \varsigma: \mV^\sing_{\ov\mu^{4|0}} \longrightarrow (\mL^\pp)^{\theta\mm\sing}_{\ov\mu^{2|2}}$ such that
\begin{equation} \label{w}
w:=\varsigma \big( \big|w_1^1, w_2^2 \big\rangle \big)= E_{-\hf, 3} E_{-\thf, 3}  E_{-\thf, 4}   \big|w_1^1, w_2^2 \big\rangle.
\end{equation}
By \propref{Bethe-gl-eigen}, $w$ is an eigenvector of $\big(\cA_{(0,2|2,0)} ^\z\big)_{(\mL^\pp)^{\theta\mm\sing}_{\ov\mu^{2|2}}}$, and the equation
$$
\Ber \big(\ocL_{(0,2|2,0)} ^\z \big) w =\bD_4 \pz^{-4} w
$$
encodes the eigenvalues of $w$.
\qed
\end{ex}

The following provides an example of the Gaudin algebra for a general linear Lie algebra acting on a tensor product of infinite-dimensional unitarizable highest weight modules.

\begin{ex}
Suppose $(p,q,m,n)=(4,0,1,0)$.
Let  $\Is=\Is^{-}_{4|0} \cup \Is^{+}_{1|0}$.
We have $\mg_{(4,0|1,0)} \cong \gl_5$.
Let $L_1=L(\mg_{(4,0|1,0)},\xi_1)$ and $L_2= L(\mg_{(4,0|1,0)},\xi_2)$, where $\xi_1:=-5\ep_{-4}-6\ep_{-3}-6\ep_{-2}-7\ep_{-1}+\ep_1 \in {\mf X}^5_{(4,0|1,0)}$ and $\xi_2:=-2\ep_{-4}-2\ep_{-3}-3\ep_{-2}-4\ep_{-1}  \in {\mf X}^2_{(4,0|1,0)}$.
The spaces $L_1$ and $L_2$ are infinite-dimensional unitarizable highest weight $\mg_{(4,0|1,0)}$-modules.

Let $\z=(z_1, z_2) \in {\bd X}_{\! 2}$, and let $\mL=L_1 \otimes L_2$.
We consider the Gaudin algebra $\big(\cA_{(4,0|1,0)}^\z\big)_{\mL^\sing_{\xi}}$, where $\xi:=-8\ep_{-4}-9\ep_{-3}-9\ep_{-2}-9\ep_{-1}+\ep_1 \in {\mf X}^7_{(4,0|1,0)}$.
We calculate that
\begin{eqnarray*}
\ov\xi_1\hspace{-2mm} &=& \hspace{-2mm} -\ep_{-3}-\ep_{-2}-2\ep_{-1}+\ep_1+5 \La_0 \in \ov{\mf X}^5_{(4,0|1,0)} , \\
\ov\xi_2 \hspace{-2mm} &=& \hspace{-2mm} -\ep_{-2}-2\ep_{-1}+2 \La_0 \in \ov{\mf X}^2_{(4,0|1,0)}, \\
\ov\xi \hspace{-2mm} &=& \hspace{-2mm} -\ep_{-4}-2\ep_{-3}-2\ep_{-2}-2\ep_{-1}+\ep_1 +7 \La_0 \in \ov{\mf X}^7_{(4,0|1,0)}.
\end{eqnarray*}
Write $\ov\xi_i=\ovla_{(i)}^{(4,0|1,0)}$, for $i=1, 2$, and $\ov\xi=\ovla^{(4,0|1,0)} $, where
$\ovla_{(1)}=\big((1), (2, 1,1); 5 \big)\in \cP^5_{(4,0|1,0)}$,
$\ovla_{(2)}=\big(\emptyset, (2, 1); 2 \big)\in \cP^2_{(4,0|1,0)}$, and
$\ovla=\big((1), (2, 2, 2,1); 7 \big)\in \cP^7_{(4,0|1,0)}$.
We see that
\begin{eqnarray*}
\ovla_{(1)}^{(4,2|2,0)} \hspace{-2mm} &=& \hspace{-2mm} -\ep_{-\thf}-3\ep_{-\hf}+\ep_1+5 \La_0 \in \ov{\mf X}^5_{(4,2|2,0)}, \\
\ovla_{(2)}^{(4,2|2,0)} \hspace{-2mm} &=& \hspace{-2mm} -\ep_{-\thf}-2\ep_{-\hf}+2 \La_0 \in \ov{\mf X}^2_{(4,2|2,0)}, \\
\ovla^{(4,2|2,0)} \hspace{-2mm} &=& \hspace{-2mm} -3\ep_{-\thf}-4\ep_{-\hf}+\ep_1+7 \La_0 \in \ov{\mf X}^7_{(4,2|2,0)}.
\end{eqnarray*}
Let $\sig=\sig_{(0,1)} \in \fS_\Is$ (see \exref{ex}).
Then $(\ovla_{(i)}^{(4,2|2,0)})^\sig=\ov\xi_i$, for $i=1, 2$, and $(\ovla^{(4,2|2,0)})^\sig=\ov\xi$.
As in \eqref{Arr-iso}, we may choose $\varphi: (\ov\mL^\prime)_{\ovla^{(4,2|2,0)}}^\sing \longrightarrow (\ov\mL^\prime)_{\ov\xi}^{\sig\mm\sing}$ such that
\begin{eqnarray*}
v&:=& \big( \varphi \circ \vartheta \big) (w) \\
&\hspace{1.5mm}=& \ov{E}_{-\hf,-4} \ov{E}_{-\hf,-3}  \ov{E}_{-\thf,-3} \ov{E}_{-\hf,-2}  \ov{E}_{-\thf,-2}  \ov{E}_{-\hf,-1}  \ov{E}_{-\thf,-1} \vartheta (w),
\end{eqnarray*}
where $w$ is defined in \eqref{w}.

Let $\mu_1$, $\mu_2$ and $\mu$ be the $(2|2)$-hook partitions defined in \exref{ex-Bethe}. We have
$$
\wt\mu_1^{2|2}= 4 \ep_{-\thf}+2 \ep_{-\hf}+\ep_1, \quad \wt\mu_2^{2|2} = \ep_{-\thf}, \quad \wt\mu^{2|2} = 4\ep_{-\thf}+3\ep_{-\hf}+\ep_1 \in \mh_{(0,2|2,0)}^*.
$$
Now choose $\vartheta$ such that
$$
\vartheta (w)= E_{-\hf, 1} E_{-\thf, 1}  E_{-\hf, 2}  E_{-\thf, 2}   w.
$$
In this way, $v  \in \mL_{\xi}^\sing$ has an expression in terms of $E_{i,j}$'s and $\big|w_1^1, w_2^2 \big\rangle$.
By \thmref{super-Bethe}, $v$ is an eigenvector of $\big(\cA_{(4,0|1,0)}^\z\big)_{\mL^\sing_{\xi}}$, and the equation
$$
\cdet \big(\cL_{(4,0|1,0)}^\z \big) v
 = \big(\pz+c(z) \big)^{6}  \bD_4  \pz^{-5} v
$$
encodes the eigenvalues of $v$, where $c(z):=5 (z-z_1)^{-1}+2 (z-z_2)^{-1}$.
\qed
\end{ex}

\vspace{0.5cm}

\noindent{\bf Acknowledgments.}
The first author was partially supported by NSFC (Grant No. 12161090).
The second author was partially supported by NSTC grant 115-2115-M-006-012-MY2.
The third author was partially supported by NSTC grant 112-2115-M-006-015-MY2, and he thanks the School of Mathematics and Statistics at Yunnan University in Kunming for hospitality and support.

\end{document}